\theoremstyle{plain}
\newtheorem{theorem}{\bf Theorem}[section]
\newtheorem{lemma}[theorem]{\bf Lemma}
\newtheorem{proposition}[theorem]{\bf Proposition}
\newtheorem{corollary}[theorem]{\bf Corollary}
\newtheorem{question}[theorem]{\bf Question}
\theoremstyle{definition}
\newtheorem{example}[theorem]{Example}
\theoremstyle{remark}
\newtheorem{remark}[theorem]{Remark}
\numberwithin{equation}{section}
\newcommand{\xx}{\mathbf{x}}
\newcommand{\QQ}{\mathbb{Q}}
\newcommand{\PP}{\mathbb{P}}
\newcommand{\NN}{\mathbb{N}}
\newcommand{\B}{\mathbb{B}}
\newcommand{\WC}{\mathbb{WCOMP}}
\newcommand{\T}{\mathcal{T}}
\newcommand{\W}{\mathcal{W}}
\newcommand{\Q}{\mathcal{Q}}
\newcommand{\A}{\mathcal{A}}
\renewcommand{\L}{\mathcal{L}}
\newcommand{\LL}{\mathbf{\varLambda}}
\newcommand{\sym}{\mathfrak{S}}
\newcommand{\lie}{\mathcal{L}ie}
\newcommand{\clr}{\mathbf{color}}
\newcommand{\BT}{\mathcal{BT}}
\newcommand{\comb}{\textsf{Comb}}
\newcommand{\lyn}{\textsf{Lyn}}
\newcommand{\nor}{\textsf{Nor}}
\newcommand{\NC}{\textsf{NC}}
\newcommand{\lyndonlambda}{\lambda^{\textsf{Lyn}}}
\newcommand{\comblambda}{\lambda^{\textsf{Comb}}}
\newcommand{\aalambda}{\lambda^{\textsf{AA}}}
\newcommand{\tnlambda}{\lambda^{\textsf{TN}}}
\newcommand{\dalambda}{\lambda^{\textsf{DA}}}
\newcommand{\inlambda}{\lambda^{\textsf{IN}}}
\def\newop#1{\expandafter\def\csname #1\endcsname{\mathop{\rm #1}\nolimits}}
\title[]{A family of symmetric functions associated with Stirling permutations}
\author[R. S. Gonz\'alez D'Le\'on]{Rafael S. Gonz\'alez D'Le\'on}
\address{Escuela de Ciencias Exactas e Ingenier\'ia, Universidad Sergio Arboleda, 
Bogot\'a, Colombia}
\email{rafael.gonzalezl@usa.edu.co}
\urladdr{\url{http://dleon.combinatoria.co}}
\begin{document}
\begin{abstract}
We present exponential generating function analogues to two classical identities involving the 
ordinary generating function of the complete homogeneous symmetric functions. After a suitable 
specialization the new identities reduce to identities involving the first and second order 
Eulerian polynomials. The study of these identities led us to consider a family of symmetric 
functions associated with a class of permutations introduced by Gessel and 
Stanley, known in the literature as  Stirling permutations. In 
particular, we define certain type statistics on Stirling permutations that refine the statistics of 
descents, ascents and plateaux and we show that their refined versions are equidistributed, 
generalizing a result of B\'ona.
The definition of this family of symmetric functions extends to the generality of $r$-Stirling 
permutations. We discuss some occurrences of these symmetric functions in 
the cases of $r=1$ and $r=2$.
\end{abstract}

\maketitle
\section{Preliminaries and notation}\label{section:notation}
We denote by $\NN$ the set of nonnegative integers, $\PP$ the set of positive integers and $\QQ$ 
the set of rational numbers.  A \emph{weak 
composition} is an infinite sequence $\mu=(\mu_1,\mu_2,\dots)$ of numbers $\mu_i \in \NN$ such that 
its \emph{sum} $|\mu|:=\sum_i \mu_i$ is finite. If $|\mu|=n$ for some $n \in \NN$, we say  that 
$\mu$ is a \emph{weak composition of $n$}. We denote by $\wcomp$ the set of weak compositions and 
$\wcomp_n$ the set of weak compositions of $n$. An \emph{(integer) partition} $\lambda$ of $n$ 
(denoted $\lambda \vdash n$) is a weak composition of $n$ whose entries are nonincreasing, i.e., 
$\lambda=(\lambda_1 \ge \lambda_2 \ge \cdots)$. If $\nu \in  \wcomp$ is  obtained by permuting the 
entries 
of another $\mu \in \wcomp$ we say that $\nu$ is a \emph{rearrangement} of $\mu$. We denote the set 
of rearrangements of $\mu$ by $\wcomp_{\mu}$. Let $\xx= 
x_1,x_2,\dots$ be an infinite set of variables and throughout this document we 
denote $x^{\mu}:=\prod_i x_i^{\mu_i}$ for $\mu \in \wcomp$. We denote by $\Lambda=\Lambda_{\QQ}$ 
the ring of symmetric 
functions in $\xx$ with rational coefficients, that is, the ring of power series 
on 
$\xx$ of bounded degree that are invariant under permutation of the variables. Let
$h_{\lambda}(x)$ and $e_{\lambda}(x)$ respectively denote the complete homogeneous symmetric 
function and the elementary symmetric function indexed by a partition $\lambda$. It is known that 
for $n \ge 0 $, the sets $\{h_{\lambda}\,\mid\, \lambda \vdash n\}$ and 
$\{e_{\lambda}\,\mid\, \lambda \vdash n\}$ are bases 
for the $n$-th homogeneous graded component of $\Lambda_{\QQ}$.
For information not presented here regarding symmetric functions the reader 
could go to \cite{Macdonald1995} and \cite[Chapter 7]{Stanley1999}. 


For a sequence $(a_0,a_1,\dots)$ of elements in a ring $R$ (containing $\QQ$)  the \emph{ordinary 
generating function} (or \emph{o.g.f.}) of 
$(a_n)$ is the formal power series $\sum_{n\ge0}a_ny^n \in R[[y]]$ and the \emph{exponential 
generating function} (or \emph{e.g.f.}) of $(a_n)$ is the formal power series 
$\sum_{n\ge0}a_n\dfrac{y^n}{n!}\in R[[y]]$ (cf. \cite{Stanley2012}). 
In all the following $f^{-1}$ denotes the multiplicative inverse and $f^{\left\langle -1 
\right\rangle}$ denotes the compositional inverse of $f \in R[[y]]$ whenever any of these inverses 
exist.

\section{Introduction}\label{section:introduction}
We consider the ring $\Lambda[[y]]$ of power series in the variable $y$ with coefficients in 
$\Lambda$. 
The following two identities are classical results in the study of symmetric functions.

\begin{proposition}[cf. \cite{Macdonald1995} Equation 
(2.6)]\label{proposition:ordinarymultiplicative}
We have 
 \begin{align*}
   \left(\sum_{n\ge0}(-1)^{n}h_{n}(\xx)y^n\right )
^{-1}=\sum_{n\ge0}e_{n}(\xx)y^n.
 \end{align*}
\end{proposition}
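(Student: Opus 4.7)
The plan is to identify both generating functions with infinite products in the variables $x_i$, and then observe that the two products are manifestly reciprocal. Concretely, I would introduce
\[
H(y) := \sum_{n\ge 0} h_n(\xx)\, y^n, \qquad E(y) := \sum_{n\ge 0} e_n(\xx)\, y^n,
\]
and prove the product expansions
\[
H(y) = \prod_{i\ge 1} \frac{1}{1 - x_i y}, \qquad E(y) = \prod_{i\ge 1} (1 + x_i y),
\]
working in the ring $\Lambda[[y]]$ (or more precisely, in the ring of formal power series in $y$ whose coefficients are power series of bounded degree in $\xx$).

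To justify the product for $H(y)$, I would expand each factor as a geometric series $(1-x_i y)^{-1} = \sum_{k\ge 0} x_i^k y^k$ and note that the coefficient of $y^n$ in the product over $i\ge 1$ is exactly $\sum_{k_1+k_2+\cdots = n} x_1^{k_1} x_2^{k_2}\cdots$, which after reindexing by the multiset of positions where $k_i > 0$ is precisely the definition of $h_n(\xx)$. For $E(y)$, similarly expanding the finite products and collecting the coefficient of $y^n$ picks up exactly one monomial $x_{i_1}\cdots x_{i_n}$ for each strictly increasing sequence $i_1 < \cdots < i_n$, which is $e_n(\xx)$.

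With these two product formulas in hand, the key computation is
\[
\left(\sum_{n\ge 0}(-1)^n h_n(\xx)\, y^n \right) \cdot \left(\sum_{n\ge 0} e_n(\xx)\, y^n \right) = H(-y)\, E(y) = \prod_{i\ge 1} \frac{1 + x_i y}{1 + x_i y} = 1,
\]
which immediately yields the claimed reciprocal relationship upon observing that $\sum_{n\ge 0}(-1)^n h_n y^n$ has constant term $1$ and is therefore invertible in $\Lambda[[y]]$.

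The only subtlety is making the infinite-product manipulations rigorous, since $\Lambda[[y]]$ is a power series ring over an infinite-variable ring. The standard way to handle this is to truncate the variable set to $\xx_N = (x_1,\dots,x_N)$, where the products become finite and the identity $H_N(-y)E_N(y)=1$ is an equality of polynomials in $y$ with polynomial coefficients, and then take the inverse limit as $N\to\infty$; the coefficient of any fixed monomial $x^{\mu}y^n$ stabilizes once $N$ is large enough. This limiting argument is the only genuine step requiring care; everything else reduces to expanding a product and matching monomials against the definitions of $h_n$ and $e_n$.
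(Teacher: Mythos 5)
Your proof is correct and is essentially the standard argument from the cited source: the paper itself gives no proof of this classical identity, deferring to Macdonald, where the same product expansions $H(y)=\prod_i(1-x_iy)^{-1}$ and $E(y)=\prod_i(1+x_iy)$ and the cancellation $H(-y)E(y)=1$ constitute the proof. Your remark about truncating to finitely many variables to make the infinite products rigorous is the right way to handle the only delicate point.
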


\begin{proposition}[cf. \cite{Stanley1997}]\label{proposition:ordinarycompositional}
 We have
 \begin{align*}
   \left(\sum_{n\ge1}(-1)^{n-1}h_{n-1}(\xx)y^n\right )
^{\left\langle -1 \right\rangle}=\sum_{n\ge1}\omega \PF_{n-1}(\xx)y^n,
 \end{align*}
 where $\omega$ is the involution in $\Lambda$ defined by  $\omega(h_n(\xx))=e_n(\xx)$ and 
$\PF_{n-1}(\xx)$ is Garsia-Haiman's parking function symmetric function, see \cite{Haiman1994} and 
\cite{GarsiaHaiman1996}.
\end{proposition}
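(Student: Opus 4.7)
The plan is to use Lagrange inversion to compute the compositional inverse in closed form and then invoke Haiman's explicit formula for the parking function symmetric function. First I would put the series being inverted into a form amenable to Lagrange inversion: setting $F(y) := \sum_{n\ge 1}(-1)^{n-1}h_{n-1}(\xx)y^n = y\sum_{m\ge 0}(-1)^m h_m(\xx) y^m$, Proposition~\ref{proposition:ordinarymultiplicative} (with $y$ replaced by $-y$) identifies the inner factor as $E(y)^{-1}$, where $E(y) := \sum_{m\ge 0} e_m(\xx)y^m$. Thus $F(y) = y/E(y)$, and since $F(0)=0$, $F'(0)=1$ the compositional inverse $G := F^{\langle -1 \rangle}$ exists in $\Lambda[[w]]$.

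Second, I would apply the Lagrange inversion formula: writing $F(y) = y/\phi(y)$ with $\phi(y) := E(y)$ satisfying $\phi(0)=1$, one obtains for each $n \ge 1$
\[
[w^n]G(w) = \frac{1}{n}[y^{n-1}]\phi(y)^n = \frac{1}{n}[y^{n-1}]E(y)^n.
\]
The proposition therefore reduces to the identity $\frac{1}{n}[y^{n-1}]E(y)^n = \omega PF_{n-1}(\xx)$. Applying $\omega$, which exchanges $e_m$ and $h_m$, this is in turn equivalent to
\[
PF_{n-1}(\xx) = \frac{1}{n}[y^{n-1}]H(y)^n, \qquad n \ge 1,
\]
where $H(y) := \sum_{m\ge 0}h_m(\xx)y^m$.

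The final and most substantial step is to recognize this last identity as the explicit formula for Haiman's parking function symmetric function (cf.~\cite{Haiman1994}, \cite{Stanley1997}). Expanding $H(y)^n$ multinomially yields $\frac{1}{n}\sum_\alpha \binom{n}{\alpha_1,\dots,\alpha_n} h_\alpha$ summed over weak compositions $\alpha = (\alpha_1,\dots,\alpha_n)$ of $n-1$ into $n$ parts; this coincides with the Frobenius characteristic of the $\sym_{n-1}$-permutation representation on parking functions of length $n-1$, either by a direct orbit-stabilizer argument coupled with the cyclic-lemma bijection between parking functions and $(\ZZ/n\ZZ)$-orbits of $\NN$-sequences of length $n$ summing to $n-1$, or by appeal to Haiman's original definition. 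The main obstacle is precisely this identification; the first two steps are formal consequences of Proposition~\ref{proposition:ordinarymultiplicative} and of Lagrange inversion, whereas the combinatorial substance of the proposition lies entirely in the explicit formula $PF_{n-1} = \frac{1}{n}[y^{n-1}]H(y)^n$.
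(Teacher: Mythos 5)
The paper offers no proof of Proposition~\ref{proposition:ordinarycompositional}---it is quoted from Stanley's parking-function paper---and your outline reconstructs the standard derivation correctly: Proposition~\ref{proposition:ordinarymultiplicative} rewrites the series as $y/E(y)$ with $E(y)=\sum_{m\ge0}e_m(\xx)y^m$, Lagrange inversion gives $\frac{1}{n}[y^{n-1}]E(y)^n$ for the $n$th coefficient of the compositional inverse, and applying $\omega$ reduces everything to the known formula $PF_{n-1}=\frac{1}{n}[y^{n-1}]H(y)^n$. The one concrete slip is in your multinomial expansion: the coefficient of $h_{\alpha}$ in $[y^{n-1}]H(y)^n$, summed over weak compositions $\alpha=(\alpha_1,\dots,\alpha_n)$ of $n-1$ into $n$ parts, is $1$ (equivalently, grouping by the underlying partition padded with zeros, the number of distinct rearrangements $n!/\prod_{j\ge0}m_j!$, where $m_j$ is the number of parts equal to $j$), not $\binom{n}{\alpha_1,\dots,\alpha_n}$; for $n=3$ your expression would give $3h_2$ where the correct contribution from $(2,0,0),(0,2,0),(0,0,2)$ is $h_2$, consistent with $PF_2=h_2+h_{(1,1)}$. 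With that coefficient corrected, the identification of $\frac{1}{n}[y^{n-1}]H(y)^n$ with the Frobenius characteristic of the parking-function module via the cycle lemma is the genuine content of the proposition, exactly as you locate it, and the remaining steps are formal.
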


It is known that $\omega \PF_{n-1}(\xx)$ has positive 
coefficients when expressed in the elementary basis (see \cite{Haiman1994} and \cite{Stanley1997}), 
a property known as \emph{$e$-positivity}.
\begin{proposition}[c.f. \cite{Stanley1997}]For $n\ge 0$,
 $$\omega \PF_{n}(\xx)=\sum_{\pi \in \NC_n}e_{\lambda(\pi)}(\xx),$$
 where $\NC_n$ is the set of noncrossing partitions of $[n]$ and 
$\lambda(\pi)$ is the integer 
partition of $n$ whose parts are the sizes of the blocks of the set partition $\pi$.
\end{proposition}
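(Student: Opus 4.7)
The plan is to convert Proposition \ref{proposition:ordinarycompositional} into a functional equation for the generating function of interest, and then verify that equation by a classical noncrossing-partition decomposition.

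First, I would combine Propositions \ref{proposition:ordinarymultiplicative} and \ref{proposition:ordinarycompositional}. Setting $E(y) := \sum_{n\ge 0} e_n(\xx) y^n$, Proposition \ref{proposition:ordinarymultiplicative} yields $\sum_{n\ge 0}(-1)^n h_n(\xx) y^n = E(y)^{-1}$, so the series inverted in Proposition \ref{proposition:ordinarycompositional} equals $y \cdot E(y)^{-1}$. Therefore $G(y) := \sum_{n\ge 1}\omega PF_{n-1}(\xx) y^n$ is the unique power series with $G(0)=0$ satisfying $G(y)/E(G(y)) = y$, i.e.
\begin{equation*}
G(y) = y \cdot E(G(y)).
\end{equation*}

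Next, introduce the candidate generating function
\begin{equation*}
R(y) := \sum_{n\ge 0} y^n \sum_{\pi \in \NC_n} e_{\lambda(\pi)}(\xx),
\end{equation*}
with $R(0)=1$ accounting for the empty partition. The target identity is $G(y) = y R(y)$, which by uniqueness of the compositional inverse reduces to showing the functional equation
\begin{equation*}
R(y) = E(y R(y)).
\end{equation*}
I would prove this combinatorially. Given $n \ge 1$ and $\pi \in \NC_n$, let $B = \{1 = a_1 < a_2 < \cdots < a_k\}$ be the block of $\pi$ containing $1$, and (setting $a_{k+1} := n+1$) let $I_i := \{a_i+1, \ldots, a_{i+1}-1\}$ of size $g_i$, so $g_1 + \cdots + g_k = n-k$. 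The noncrossing condition forces every other block of $\pi$ to lie entirely inside a single $I_i$, and $\pi|_{I_i}$ is itself noncrossing; conversely, any tuple $(k; g_1,\ldots,g_k; \pi_1,\ldots,\pi_k)$ with $\pi_i \in \NC_{g_i}$ assembles into a unique $\pi \in \NC_n$. Since $e_{\lambda(\pi)} = e_k \cdot \prod_i e_{\lambda(\pi_i)}$, summing over all such data yields
\begin{equation*}
R(y) = 1 + \sum_{k\ge 1} e_k \bigl(yR(y)\bigr)^k = E\bigl(yR(y)\bigr),
\end{equation*}
as required. Extracting the coefficient of $y^n$ in $G(y) = y R(y)$ and shifting the index then gives the stated formula.

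The main obstacle is the combinatorial step verifying that the decomposition across gaps is bijective. One direction requires showing that no block of $\pi$ can straddle two gaps $I_i$ and $I_j$: any such block would together with two consecutive elements $a_i, a_{i+1}$ of $B$ produce a four-element crossing pattern, contradicting noncrossingness. The converse requires checking that reassembly produces no new crossings, either between blocks lying in distinct gaps (which are fully separated by an element of $B$) or between a gap-block and $B$ itself (which is a nesting, not a crossing). Once this bijection is established, the rest is a direct manipulation of generating functions.
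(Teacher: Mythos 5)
Your argument is correct. Note that the paper does not prove this proposition at all --- it is quoted from \cite{Stanley1997} as a known result --- so there is no internal proof to compare against. Your derivation is the standard one: combining Propositions \ref{proposition:ordinarymultiplicative} and \ref{proposition:ordinarycompositional} to get the functional equation $G(y)=yE(G(y))$ for $G(y)=\sum_{n\ge1}\omega PF_{n-1}(\xx)y^n$, and then verifying that $yR(y)$ satisfies the same equation via the first-block decomposition of a noncrossing partition (the block $B\ni 1$ together with the noncrossing partitions induced on the gaps between consecutive elements of $B$). All the steps check out: the uniqueness of the solution of $G=yE(G)$ with $G(0)=0$ follows by recursive coefficient extraction, the multiplicativity $e_{\lambda(\pi)}=e_{|B|}\prod_i e_{\lambda(\pi_i)}$ is exactly what makes the weight compatible with the decomposition, and your crossing/nesting analysis for both directions of the bijection is the right justification. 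This is essentially the classical proof of the $e$-expansion of $\omega PF_n$, so it is a faithful reconstruction of the cited result rather than a new route.
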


A common 
feature of Propositions \ref{proposition:ordinarymultiplicative} and 
\ref{proposition:ordinarycompositional} is the e-positivity of the coefficients of the power series 
in the right-hand side.

In this work we find exponential generating function analogues to Propositions 
\ref{proposition:ordinarymultiplicative} and \ref{proposition:ordinarycompositional}. We prove the 
following theorems.

\begin{theorem}\label{theorem:exponentialmultiplicative}
 We have
 \begin{align}\label{equation:exponentialmultiplicativeinverse}
   \left(\sum_{n\ge0}(-1)^{n}h_{n}(\xx)\dfrac{y^n}{n!}\right )
^{-1}=\sum_{n\ge0}\sum_{\sigma \in \sym_n}e_{\lambda(\sigma)}(\xx)\dfrac{y^n}{n!},
 \end{align}
 where $\sym_n$ is the set of permutations of $[n]=:\{1,2,\dots,n\}$ and  
$\lambda(\sigma)$ is the consecutive ascending type 
of $\sigma \in \sym_n$ (defined in Section \ref{section:stirlingtype}) .
\end{theorem}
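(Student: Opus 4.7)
The plan is to expand $A_n := \sum_{\sigma \in \sym_n} e_{\lambda(\sigma)}(\xx)$ in the complete homogeneous basis and then recognize the resulting exponential generating function as the reciprocal of $\sum_n (-1)^n h_n(\xx) y^n/n!$. Proposition~\ref{proposition:ordinarymultiplicative} will enter as an auxiliary lemma relating certain alternating sums of products $e_\gamma$ to a single $h_m$.

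First I would group permutations by their consecutive ascending type to write $A_n = \sum_{\alpha \vDash n} N(\alpha)\, e_\alpha$, where $N(\alpha)$ is the number of $\sigma \in \sym_n$ with $\lambda(\sigma) = \alpha$. Using the classical fact that permutations of $[n]$ with descent set contained in a prescribed subset are counted by a multinomial coefficient, Möbius inversion on the Boolean lattice of descent sets yields
\[
N(\alpha) = \sum_{\beta \succeq \alpha}(-1)^{\ell(\alpha)-\ell(\beta)}\binom{n}{\beta},
\]
where $\beta \succeq \alpha$ means $\beta$ coarsens $\alpha$ and $\binom{n}{\beta}$ is the multinomial coefficient. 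Substituting this into $A_n$, swapping the order of summation, and using the fact that refinements of $\beta=(\beta_1,\ldots,\beta_m)$ correspond to tuples of compositions of the individual parts $\beta_i$, the inner sum factors as
\[
\sum_{\alpha \preceq \beta}(-1)^{\ell(\alpha)-\ell(\beta)} e_\alpha = \prod_{i=1}^{m} \sum_{\gamma \vDash \beta_i} (-1)^{\ell(\gamma)-1} e_\gamma.
\]

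Each factor reduces via the auxiliary identity $\sum_{\gamma\vDash m}(-1)^{\ell(\gamma)-1} e_\gamma = (-1)^{m-1} h_m$ for $m \ge 1$. I would derive this directly from Proposition~\ref{proposition:ordinarymultiplicative}: summing the left-hand side over $m\ge 1$ weighted by $y^m$ yields a geometric series in $\sum_{k\ge 1} e_k y^k$ whose total is $1 - \left(\sum_{k\ge 0} e_k y^k\right)^{-1}$, and the classical $h$-$e$ inverse relation converts this into $\sum_{k\ge 1}(-1)^{k-1} h_k y^k$, giving the claimed coefficient. Plugging back produces the closed form $A_n = \sum_{\beta \vDash n}(-1)^{n-\ell(\beta)}\binom{n}{\beta} h_\beta$, after which a direct EGF computation gives
\[
\sum_{n \ge 0} A_n \frac{y^n}{n!} = \sum_{\ell \ge 0} \left(\sum_{k\ge 1} \frac{(-1)^{k-1} h_k(\xx) y^k}{k!}\right)^{\ell} = \left(\sum_{k\ge 0}(-1)^k h_k(\xx) \frac{y^k}{k!}\right)^{-1},
\]
which is precisely the identity claimed in the theorem.

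The main obstacle is the bookkeeping in the nested inclusion-exclusion: tracking how Möbius inversion on descent sets interacts with the factorization over parts of $\beta$ and with the signs coming from the auxiliary $e$-to-$h$ lemma. Once all the signs line up, what remains is a routine geometric-series computation. A purely combinatorial proof via a sign-reversing involution on pairs (a subset $K \subseteq [n]$ equipped with a weakly increasing $\PP$-labeling, together with a permutation of $[n]\setminus K$ equipped with a run-strict labeling) would be appealing, but appears harder to carry out cleanly, since the $h_k$-data and the $e$-data attach to structurally different kinds of objects.
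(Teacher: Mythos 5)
Your argument is correct, and it reaches the identity by a genuinely different route from the paper. The paper proves Theorem \ref{theorem:exponentialmultiplicative} in Section \ref{section:permutations} by invoking the Fr\"oberg--Carlitz--Scoville--Vaughan--Gessel reciprocal theorem (Theorem \ref{theorem:gessel}): it builds an alphabet of colored letters $a^c$, declares $a^{c_1}b^{c_2}$ forbidden when $a<b$ and $c_1\le c_2$, observes that the forbidden words are enumerated by $h_n(\xx)$ while the allowed words are colored permutations enumerated by $\sum_{\sigma}e_{\aalambda(\sigma)}(\xx)$, and concludes via a sign-reversing involution on composite words. You instead compute $A_n=\sum_{\sigma\in\sym_n}e_{\lambda(\sigma)}(\xx)$ algebraically: inclusion--exclusion over descent sets gives the count of permutations with a prescribed descent composition, the inner sum over refinements factors over the parts of $\beta$, and each factor collapses by the identity $\sum_{\gamma\vDash m}(-1)^{\ell(\gamma)-1}e_\gamma=(-1)^{m-1}h_m$ --- which is precisely the paper's Proposition \ref{proposition:htoe}, used there only for the Eulerian specialization in Section \ref{section:specializations}. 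All the signs check out (e.g.\ your closed form gives $A_2=-h_{(2)}+2h_{(1,1)}$, matching the appendix), and the final geometric-series step is routine. What each approach buys: the paper's proof is uniform with its treatment of the compositional inverse via Drake's theorem and exhibits the colored-permutation model that underlies the whole $SP^{(r)}_n$ family; yours is self-contained, avoids the word/involution machinery entirely, and produces the explicit expansion $A_n=\sum_{\beta\vDash n}(-1)^{n-\ell(\beta)}\binom{n}{\beta}h_\beta$ as a useful byproduct. One small remark: the combinatorial involution you dismiss at the end as hard to carry out cleanly is essentially what the paper's proof of Theorem \ref{theorem:gessel} accomplishes, since both the $h$-data and the $e$-data are realized there as colorings of the same kind of object (colored letters in a word).
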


\begin{theorem}\label{theorem:exponentialcompositional}
 We have
 \begin{align}\label{equation:exponentialcompositionalinverse}
   \left(\sum_{n\ge1}(-1)^{n-1}h_{n-1}(\xx)\dfrac{y^n}{n!}\right )
^{\left\langle -1 \right\rangle}=\sum_{n\ge1}\sum_{\theta \in 
\Q_{n-1}}e_{\lambda(\theta)}(\xx)\dfrac{y^n}{n!},
 \end{align}
 where $\Q_n$ is the set of Stirling (multi)permutations of the multiset 
$[n]\sqcup[n]=:\{1,1,2,2,\dots,n,n\}$ (defined by Gessel and Stanley in
\cite{StanleyGessel1978}) and  
where $\lambda(\theta)$ is one of the types $\aalambda$, $\dalambda$, $\tnlambda$ and 
$\inlambda$ (defined in  Section \ref{section:stirlingtype}) for $\theta \in \Q_n$.
\end{theorem}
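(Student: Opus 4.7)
The plan is to prove Theorem~\ref{theorem:exponentialcompositional} by combining Lagrange inversion with Theorem~\ref{theorem:exponentialmultiplicative} and the classical bijection between Stirling multipermutations and plane recursive trees. First I would set $F(y) := \sum_{n\ge 1}(-1)^{n-1}h_{n-1}(\xx)y^n/n! = y\phi(y)$, where $\phi(y) = \sum_{m\ge 0}(-1)^m h_m(\xx)y^m/(m+1)!$, noting $\phi(0)=1$. Lagrange inversion in the form
\[
n!\,[y^n]\,F^{\langle -1\rangle}(y) \;=\; (n-1)!\,[y^{n-1}]\,\phi(y)^{-n}
\]
then reduces the problem to producing a combinatorial expansion of $\phi(y)^{-n}$ in the elementary basis.

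Second, I would leverage Theorem~\ref{theorem:exponentialmultiplicative}, which gives the $e$-positive expansion of $H(y)^{-1}$ where $H(y) := \sum_{n\ge 0}(-1)^n h_n(\xx)y^n/n!$. Since $F'(y) = H(y)$ and $\phi$ differs from $H$ only by the factorial shift $(m+1)!$ versus $m!$, I would pass from the multiplicative inverse $H(y)^{-1}$ (already indexed by $\sym_n$) to the required powers $\phi(y)^{-n}$, either by a direct formal manipulation or by an exponential-formula argument interpreting $\phi(y)^{-n}$ as the e.g.f.\ of $n$-colored sequences of the permutation-indexed structures produced by Theorem~\ref{theorem:exponentialmultiplicative}.

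Third, I would invoke the classical bijection (originating with Gessel and developed further by Janson, Kuba, and Panholzer, among others) between $\Q_{n-1}$ and plane recursive trees on $n$ labeled vertices $\{0,1,\ldots,n-1\}$ rooted at $0$. Under this bijection, the labeled objects produced by Lagrange inversion are precisely the plane recursive trees, and the $e_\lambda$-weights inherited from Theorem~\ref{theorem:exponentialmultiplicative} transfer to a natural type $\lambda(\theta)$ defined directly on each $\theta \in \Q_{n-1}$. The claim that $\lambda(\theta)$ may be taken as ``any of various types'' corresponds to the fact that suitable refinements of the descent, ascent, and plateau statistics on Stirling permutations turn out to be equidistributed, generalizing B\'ona's theorem, so each such refinement yields the same $e$-symmetric-function coefficient.

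The main obstacle will be the clean combinatorial identification between the Lagrange-inversion-induced tree decorations (weighted by the consecutive ascending types appearing in Theorem~\ref{theorem:exponentialmultiplicative}) and the refined type $\lambda(\theta)$ on Stirling permutations. In particular, pinpointing which maximal runs of a Stirling permutation contribute which parts of $\lambda(\theta)$, and verifying that the factorial bookkeeping $n! = n\cdot(n-1)!$ arising from Lagrange inversion matches the enumeration of Stirling permutations via the tree bijection, is the technical heart of the argument; this is precisely what the design of the various refined type statistics has to accomplish, and in fact establishing the bijective correspondence is what simultaneously yields the generalization of B\'ona's equidistribution.
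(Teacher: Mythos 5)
Your overall strategy (Lagrange inversion plus a tree bijection) is a legitimate alternative direction, but as written it has a genuine gap at its central step: the passage from Theorem~\ref{theorem:exponentialmultiplicative} to the powers $\phi(y)^{-n}$. Writing $F(y)=y\phi(y)$ with $\phi(y)=\sum_{m\ge0}(-1)^mh_m(\xx)y^m/(m+1)!$, Lagrange inversion does reduce the problem to expanding $\phi(y)^{-n}$; however, Theorem~\ref{theorem:exponentialmultiplicative} expands the multiplicative inverse of $H(y)=\sum_{m\ge0}(-1)^mh_m(\xx)y^m/m!=F'(y)$, and there is no formal identity relating $H(y)^{-1}=(F')^{-1}$ to $(F/y)^{-n}$: the factorial shift from $m!$ to $(m+1)!$ turns $H$ into $\tfrac1y\int_0^yH$, and negative powers of an antiderivative are not controlled by the multiplicative inverse of the integrand. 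Concretely, $\phi(y)^{-1}=1+\tfrac12e_1y+\bigl(\tfrac1{12}e_1^2+\tfrac16e_2\bigr)y^2+\cdots$, whose coefficients are not those of $H^{-1}$ and do not even become integral combinations of $e_\lambda$'s after multiplying by $m!$, so the proposed reading of $\phi^{-n}$ as ``$n$-colored sequences of the permutation-indexed structures of Theorem~\ref{theorem:exponentialmultiplicative}'' does not apply. You would need a new, independently proven $e$-positive combinatorial expansion of $\phi(y)^{-n}$ (or of the functional equation $G=y\,\phi(G)^{-1}$), and together with the matching of those weights to the types $\lambda(\theta)$ on $\Q_{n-1}$ this is exactly the content you defer to the ``technical heart'' without supplying it; the equidistribution of the various types likewise cannot be extracted as a byproduct without an explicit construction.

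For comparison, the paper avoids Lagrange inversion entirely: it applies Drake's theorem, which interprets $F^{\left\langle -1 \right\rangle}$ directly as the generating function for leaf-labeled binary trees built from allowed links once $-F(-y)$ is recognized as the signed generating function for the forbidden trees (Lemmas~\ref{lemma:forbiddenlyndon} and~\ref{lemma:forbiddencomb}); the allowed trees are colored Lyndon trees (resp.\ colored combs), whose color-content generating function over a fixed normalized tree is exactly $e_{\lyndonlambda}$ (resp.\ $e_{\comblambda}$) by Propositions~\ref{proposition:generatinglyndontype} and~\ref{proposition:generatingcombtype}, and the bijection $\gamma$ of Proposition~\ref{proposition:propgamma} (together with $\xi$ and reversal) transports these types to the types on Stirling permutations. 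If you wish to pursue your route, the missing ingredient is the combinatorial theorem about $\phi(y)^{-n}$ itself; without it the argument does not go through.
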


Theorem \ref{theorem:exponentialcompositional} was first derived by the author in \cite{Dleon2013} 
using poset topology techniques applied to a poset of partitions weighted by weak compositions. The 
coefficient of $\dfrac{y^n}{n!}$ in the power series of the right-hand side of equation 
(\ref{equation:exponentialcompositionalinverse}) is the generating function for the dimensions of 
the reduced (co)homology of the maximal intervals of the poset of weighted partitions and also for 
the dimensions of the multilinear components of the free
Lie algebra with multiple compatible brackets on $n$ generators.
Here we take a different approach that does not involve poset theoretic techniques. We provide a 
different combinatorial proof of Theorem \ref{theorem:exponentialcompositional} using an 
interpretation by B. Drake of the compositional inverse of an exponential generating function in 
\cite{Drake2008}. 

Drake's technique is a kind of combinatorial Lagrange inversion that 
can be used under certain conditions, namely when the power series is a generating function for a 
family of trees constructed out of basic building blocks and involves only quadratic instructions 
(two blocks at a time) on how to compose building blocks. There is a great deal of literature on 
Lagrange inversion and combinatorial Lagrange inversion. In particular, Joyal's theory of 
combinatorial species provides a combinatorial framework for Lagrange inversion with minimal 
assumed conditions. We refer the reader to the book of Bergeron, Labelle 
and Leroux \cite{BergeronLabelleLeroux1998} for an account on the subject. There exist also 
various generalizations of Lagrange inversion, see for example the $q$-Lagrange inversions 
studied by Garsia \cite{Garsia1981}, Gessel \cite{Gessel1980} and Garsia and Haiman 
\cite{GarsiaHaiman1996}.
The advantage of Drake's approach is that whenever the generating function satisfies certain 
conditions the interpretation for the compositional inverse becomes simple, uses the same type of  
combinatorial objects as the ones that are being counted and does not involve any cancellations due 
to signs. We remark here that, because of the independence of the symmetric functions 
$h_n(\xx)$ in the ring $\Lambda$, equation \eqref{equation:exponentialcompositionalinverse} can be 
seen as a Lagrange inversion formula for exponential power series. This work however is not about 
inversion of power series but instead is about investigating a family of symmetric functions that 
naturally appear when studying these inversion formulas.

In order to apply Drake's theorem we study a subset of the set of planar leaf-labeled binary trees 
that we call \emph{normalized}.  The normalization condition means that in any subtree the smallest 
label is in the leftmost leaf. This is equivalent to considering non-planar leaf-labeled binary 
trees 
(or phylogenetic trees) and the normalization condition is just a particular choice on how to draw 
these trees in the plane. Using Drake's technique we prove a version of Theorem 
\ref{theorem:exponentialcompositional} in terms of normalized trees (instead of Stirling 
permutations) and use this result and a bijection used in \cite{Dleon2013} between normalized 
trees and Stirling permutations (a bijection that appeared first in \cite{Dotsenko2012}) to derive 
Theorem \ref{theorem:exponentialcompositional}.

We then generalize the symmetric functions that appear as coefficients of the power series of the 
right-hand side of equation (\ref{equation:exponentialcompositionalinverse}) to the generality of 
the family $\Q_n(r)$ of $r$-Stirling permutations, where Stirling permutations correspond to the 
case $r=2$ and the classical permutations in the symmetric group to the case $r=1$. We consider the 
family of symmetric functions

\begin{align}\label{definition:s2_1}
 \SP_n^{(r)}(\xx)=\sum_{\theta \in \Q_n(r)}e_{\lambda(\theta)}(\xx),
\end{align}
where $\lambda(\theta)$ is any of various types of $\theta$ (defined in Section 
\ref{section:multipermutations}).

It turns out that the case $r=1$ is the family of symmetric functions that appear in the 
right hand side of equation (\ref{equation:exponentialmultiplicativeinverse}). In order to prove 
Theorem \ref{theorem:exponentialmultiplicative}, this time 
we use an interpretation of the multiplicative inverse of an exponential generating function that 
can be derived from a more general result discovered by Fr\"oberg \cite{Froberg1975}, 
Carlitz-Scoville-Vaughan \cite{CarlitzScovilleVaughan1976} and Gessel \cite{Gessel1977}. As in the 
case of Theorem \ref{theorem:exponentialcompositional} the author provides in 
\cite{Dleon2016} a second proof of Theorem 
\ref{theorem:exponentialmultiplicative} using poset topology techniques 
over a poset of subsets weighted by weak compositions. Some of the context of that proof is 
discussed in Section \ref{section:posetcohomology}. 

We note that after the simple specialization $e_i \mapsto t$ the function $\SP^{(r)}_n(\xx)$ 
reduces 
to 
$A^{(r)}_n(t)$, the $r$-th order Eulerian polynomial, that is the descent generating polynomial 
of 
the family of $r$-Stirling permutations (defined later). In the case $r=1$, $\SP^{(1)}_n(\xx)$ 
specializes to the classical Eulerian polynomial $A_n(t):=A^{(1)}_n(t)$, that is the descent 
generating 
polynomial of $\sym_n$, and equation 
(\ref{equation:exponentialmultiplicativeinverse}) specializes to the following classical result.

\begin{theorem}[Riordan \cite{Riordan1951}]\label{theorem:riordan} We have
 \begin{align*} 
   \dfrac{1-t}{1-te^{(1-t)y}}= \sum_{n\ge0}A_n(t)\dfrac{y^n}{n!}.
 \end{align*}
\end{theorem}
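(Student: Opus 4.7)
The plan is to obtain Theorem \ref{theorem:riordan} as the image of Theorem \ref{theorem:exponentialmultiplicative} under the ring homomorphism $\varphi : \Lambda \to \QQ[t]$ defined on generators by $\varphi(e_n) = t$ for $n \geq 1$ (with $\varphi(e_0) = 1$), extended coefficientwise to $\Lambda[[y]]$. Since $\varphi$ is a ring homomorphism it commutes with multiplicative inverses, so it suffices to compute the image of each side of equation (\ref{equation:exponentialmultiplicativeinverse}).

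For the right-hand side, $\varphi(e_{\lambda(\sigma)}) = t^{\ell(\lambda(\sigma))}$, where $\ell(\lambda(\sigma))$ denotes the number of parts of the consecutive ascending type of $\sigma$, equivalently the number of maximal ascending runs of $\sigma$. Summing over $\sym_n$ yields precisely the Eulerian polynomial $A_n(t)$ under the convention fixed in the paragraph preceding the statement, so the right-hand side specializes to $\sum_{n \geq 0} A_n(t)\tfrac{y^n}{n!}$.

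For the left-hand side, I first determine $\varphi(h_n)$ using the ordinary generating function relation of Proposition \ref{proposition:ordinarymultiplicative}:
\begin{align*}
\sum_{n \geq 0}(-1)^n \varphi(h_n) y^n = \left( \sum_{n \geq 0} \varphi(e_n) y^n \right)^{-1} = \left(1+\frac{ty}{1-y}\right)^{-1} = \frac{1-y}{1-(1-t)y}.
\end{align*}
A partial-fraction (or direct geometric-series) expansion yields $(-1)^n \varphi(h_n) = -t(1-t)^{n-1}$ for $n \geq 1$. Feeding this into the exponential generating function on the left of (\ref{equation:exponentialmultiplicativeinverse}),
\begin{align*}
\sum_{n \geq 0}(-1)^n \varphi(h_n) \frac{y^n}{n!}
= 1 - t\sum_{n \geq 1}\frac{(1-t)^{n-1}y^n}{n!}
= 1 - \frac{t}{1-t}\bigl(e^{(1-t)y}-1\bigr)
= \frac{1-te^{(1-t)y}}{1-t}.
\end{align*}
Inverting this rational/transcendental expression and invoking Theorem \ref{theorem:exponentialmultiplicative} gives $\frac{1-t}{1-te^{(1-t)y}} = \sum_{n\geq 0}A_n(t)\tfrac{y^n}{n!}$, which is the claim.

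The argument is essentially mechanical once Theorem \ref{theorem:exponentialmultiplicative} is available; the only mildly delicate step is the two-stage passage from the ordinary generating function identity used to pin down $\varphi(h_n)$ to the exponential generating function in which these coefficients must be reassembled, and keeping the alternating signs consistent throughout that reassembly.
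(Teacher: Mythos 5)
Your proposal is correct and takes essentially the same approach as the paper: Section \ref{section:specializations} introduces the identical specialization $E(e_i(\xx))=t$, establishes $E(h_n(\xx))=t(t-1)^{n-1}$, and obtains Theorem \ref{theorem:riordan} by applying $\tilde E$ coefficientwise to equation (\ref{equation:exponentialmultiplicativeinverse2}), exactly as you do. The only (immaterial) difference is how the value of $\varphi(h_n)$ is pinned down --- you invert the specialized ordinary generating function of Proposition \ref{proposition:ordinarymultiplicative}, while the paper goes through the composition expansion of $h_n$ in the elementary basis (Proposition \ref{proposition:htoe}); both computations rest on the same underlying $e$--$h$ identity and yield $t(t-1)^{n-1}$.
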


In the case $r=2$, we obtain the following analogous result.
\begin{theorem}\label{theorem:specializedexponentialcompositional}
 We have
 \begin{align*}
   \left(\dfrac{(1-t)y+(1-e^{y(1-t)})t}{(1-t)^2}\right )
^{\left\langle -1 \right\rangle}=\sum_{n\ge1}A^{(2)}_{n-1}(t)\dfrac{y^n}{n!}.
 \end{align*}
\end{theorem}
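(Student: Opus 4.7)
The plan is to derive Theorem~\ref{theorem:specializedexponentialcompositional} directly from Theorem~\ref{theorem:exponentialcompositional} by applying the ring homomorphism $\varphi\colon \Lambda \to \QQ[t]$ defined by $\varphi(e_i(\xx)) = t$ for $i \ge 1$ (and $\varphi(e_0) = 1$). Extending $\varphi$ coefficient-wise to $\Lambda[[y]]$ gives a ring homomorphism that commutes with both multiplicative and compositional inverses in $y$, provided the resulting series has zero constant term and nonzero linear term in $y$ (both conditions are manifestly preserved here). Since the paper notes that $\varphi(SP^{(r)}_n(\xx)) = A^{(r)}_n(t)$, the right hand side of \eqref{equation:exponentialcompositionalinverse} specializes under $\varphi$ (for $r=2$) to $\sum_{n \ge 1} A^{(2)}_{n-1}(t) \tfrac{y^n}{n!}$, which matches the target right hand side.

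The main task is then to compute the image under $\varphi$ of the inner series $\sum_{n \ge 1}(-1)^{n-1} h_{n-1}(\xx) \tfrac{y^n}{n!}$. First I would compute $\varphi(h_n)$ for $n \ge 0$. Using Proposition~\ref{proposition:ordinarymultiplicative} with $y$ regarded as a formal parameter,
\begin{equation*}
\sum_{n\ge 0}\varphi(h_n)\,y^n \;=\; \Bigl(\sum_{n \ge 0}(-1)^n \varphi(e_n)\, y^n\Bigr)^{-1} \;=\; \Bigl(1 - \tfrac{ty}{1+y}\Bigr)^{-1} \;=\; \frac{1+y}{1+(1-t)y}.
\end{equation*}
Expanding via geometric series yields $\varphi(h_0) = 1$ and $\varphi(h_n) = t(t-1)^{n-1}$ for $n \ge 1$.

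Plugging this into the series, the $n=1$ term contributes $y$, and for $n \ge 2$ the term is
\begin{equation*}
(-1)^{n-1} t(t-1)^{n-2}\,\tfrac{y^n}{n!} \;=\; -t\,(1-t)^{n-2}\,\tfrac{y^n}{n!}.
\end{equation*}
Setting $u = (1-t)y$ and summing over $n \ge 2$ gives $-\tfrac{t}{(1-t)^2}(e^u - 1 - u)$, so the full specialized inner series equals
\begin{equation*}
y + \frac{ty}{1-t} - \frac{t(e^{(1-t)y} - 1)}{(1-t)^2} \;=\; \frac{(1-t)y + (1 - e^{(1-t)y})t}{(1-t)^2},
\end{equation*}
which is exactly the series whose compositional inverse appears in the statement.

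Since $\varphi$ commutes with compositional inversion in $y$, taking $\varphi$ of both sides of \eqref{equation:exponentialcompositionalinverse} completes the proof. The only potentially delicate point is verifying that $\varphi$ commutes with $\langle -1 \rangle$, and this is a standard fact about formal power series over commutative rings, guaranteed here because the specialized series has vanishing constant term and coefficient $1$ on $y$. The computation itself is routine; the main thing is to organize the algebra carefully so that the geometric-series factor $(1+y)^{-1}$ coming from $\varphi(\sum (-1)^n e_n y^n)$ is correctly absorbed.
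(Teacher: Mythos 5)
Your proof is correct and follows essentially the same route as the paper: specialize $e_i \mapsto t$ coefficientwise, observe that this commutes with compositional inversion in $y$, compute the image of the inner series from $E(h_n)=t(t-1)^{n-1}$, and identify the right-hand side as $\sum_{n\ge1} A^{(2)}_{n-1}(t)\frac{y^n}{n!}$ via $\ell(\aalambda(\theta))=\des(\theta)$. The only (minor) divergence is that you obtain $E(h_n)=t(t-1)^{n-1}$ by specializing the ordinary generating function identity of Proposition \ref{proposition:ordinarymultiplicative} and expanding a geometric series, whereas the paper derives it from the composition-indexed expansion of $h_n$ in the elementary basis (Proposition \ref{proposition:htoe}); both rest on the same $e$--$h$ relation, and your version is slightly more economical.
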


The paper is organized as follows: in Section \ref{section:binarytrees} we discuss Drake's 
interpretation of compositional inverses of exponential generating functions and use this 
interpretation to give a version of Theorem \ref{theorem:exponentialcompositional} in terms of the 
family of normalized labeled binary trees. In Section \ref{section:stirlingpermutations} we use a 
bijection between normalized labeled binary trees and Stirling permutations to prove Theorem 
\ref{theorem:exponentialcompositional}. We then consider the natural generalization 
(\ref{definition:s2_1}) of the symmetric 
functions that appear as coefficients in the right-hand side of equation 
(\ref{equation:exponentialcompositionalinverse}) and show in Section 
\ref{section:permutations} that in the base case they are precisely the family of symmetric 
functions that appear in the right-hand side of equation 
(\ref{equation:exponentialmultiplicativeinverse}). We discuss an interpretation of multiplicative 
inverses of exponential generating functions 
and use it to prove Theorem \ref{theorem:exponentialmultiplicative}. In Section 
\ref{section:specializations} we show that under a simple specialization Theorems 
\ref{theorem:exponentialmultiplicative} and \ref{theorem:exponentialcompositional} reduce to 
expressions involving first and second order Eulerian polynomials. Finally, in Section 
\ref{section:futuredirections} we briefly present other contexts where the 
symmetric functions $\SP^{(1)}_n(\xx)$ and $\SP^{(2)}_n(\xx)$ make an appearance. In  particular, 
these symmetric functions are the generating functions for the M\"obius invariants of the maximal 
intervals of two families of posets. We leave some open questions regarding the cases $r \ge 3$.

\section{Binary trees}\label{section:binarytrees}

A tree is a connected graph that has no loops or cycles. We say that a tree is rooted if 
one of its nodes is specially marked and called the \emph{root}. For two nodes $x$ and
$y$ on a rooted tree $T$, $x$ is said to be the \emph{parent} of $y$ (and $y$ the 
\emph{child} of $x$) if $x$ is the node that follows $y$ in the unique path from $y$ to the 
root. A node is called a \emph{leaf} if it has no children, otherwise is said to be 
\emph{internal}. A rooted tree $T$ is said to be \emph{planar} if for every internal node $x$ of 
$T$ the set of children of $x$ is totally ordered. A \emph{(leaf-)labeled (planar) tree} 
$(T,\sigma)$ is defined as a planar tree $T$ whose $j$th leaf from left to right 
has been labeled $\sigma(j)$, where $\sigma$ is a permutation. A rooted tree is said to be 
\emph{complete} if all internal nodes have the same number of children. A \emph{(complete planar) 
binary tree} is a planar rooted tree in which every internal node has exactly two children, a left 
and a right child. 
We denote by $\BT_n$  the set of leaf-labeled binary trees with $n$ leaves, these will play a 
relevant role in the following. See Figure \ref{fig:treecomposition} for some examples of labeled 
binary trees.

\subsection{Drake's interpretation of compositional inverse}\label{subsection:drakesinterpretation}

In \cite{Drake2008} B. Drake proposes an interesting interpretation of the compositional inverse
of an exponential generating function in terms of trees with allowed and forbidden links. This 
interpretation was also rediscovered by Dotsenko in \cite{Dotsenko2012}. 

Consider a set of rooted trees (either planar or not) whose leaves are all labeled with distinct 
positive integers. We also consider that the rooted trees in this set come together with some local
function, that we call a \emph{valency}, that allows us to extend consistently the 
labeling from the children to the parents. Hence, every internal node has been also assigned a 
label coming from the set of labels of the leaves. Formally a \emph{valency} is a 
recursively defined rule that assigns to each
node $x$ (internal or leaf)  of a leaf-labeled rooted tree $T$ a unique element $v(x)$ such that 
$v(x)\in \{v(y) \mid y \text{ a child of }x\}$ if $x$ is an internal node, or $v(x)=l$ if $x$ is a 
leaf with label $l$. The \emph{valency of a rooted tree} $v(T)$ is defined as the valency of its 
root.

\begin{example}
We can consider as an example the set of binary trees with distinct leaf-labels. 
A possible valency function is the one that assigns to a parent the smallest valency among the 
valencies of its children. In Figure \ref{fig:treecomposition} we illustrate some of these trees 
with the valency labels indicated near each node.
\end{example}

For two rooted trees $T_1$ and $T_2$ with label sets that only coincide
in one label that is both a leaf of $T_1$ and the valency of $T_2$, the 
\emph{composition} $T_1
\circ T_2$, is defined to be the tree obtained by deleting the common label from the leaf of $T_1$
and attaching the root of $T_2$ instead in its position (see Figure \ref{fig:treecomposition}). If 
the above condition for $T_1$ and 
$T_2$ is not
satisfied $T_1 \circ T_2$
is undefined. Note that composition is associative and so expressions like $T_1 \circ T_2 \circ
\cdots \circ T_k$ are well defined.

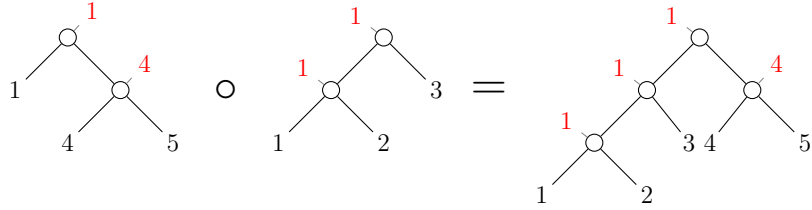
\begin{figure}[ht] 
  \begin{tikzpicture}[scale=0.7]
\begin{scope}[xshift=5cm,yshift=-1cm]

\tikzstyle{every node}=[draw,inner sep=1mm,scale=0.8]
    \draw [circle] (1,1)  node (i1)[pin={[red,pin distance=.05in]150:$1$}]{$$};
    \draw [circle] (2,2)  node (i2)[pin={[red,pin distance=.05in]150:$1$}]{$$};

\tikzstyle{every node}=[inner sep=1pt, minimum width=14pt,scale=0.8]

    \draw (0,0)  node (m){$1$};
    \draw (2,0)  node (l1){$2$};
    \draw (3,1)  node (l2){$3$};

    \draw (m) --  (i1) ;
    \draw (i1) --  (l1) ;
    \draw (i1) --  (i2) ;
    \draw (i2) --  (l2) ;
\end{scope}
\begin{scope}[xshift=-1cm,yshift=-1cm]

\tikzstyle{every node}=[draw,inner sep=1mm,scale=0.8]
    \draw [circle] (3,1)  node (i1)[pin={[red,pin distance=.05in]50:$4$}]{$$};
    \draw [circle] (2,2)  node (i2)[pin={[red,pin distance=.05in]50:$1$}]{$$};

\tikzstyle{every node}=[inner sep=1pt, minimum width=14pt,scale=0.8]

    \draw (2,0)  node (m){$4$};
    \draw (4,0)  node (l1){$5$};
    \draw (1,1)  node (l2){$1$};

    \draw (m) --  (i1) ;
    \draw (i1) --  (l1) ;
    \draw (i1) --  (i2) ;
    \draw (i2) --  (l2) ;
\end{scope}

\begin{scope}[xshift=10cm,yshift=-1cm]

\tikzstyle{every node}=[draw,inner sep=1mm,scale=0.8]
    \draw [circle] (1,0)  node (i1)[pin={[red,pin distance=.05in]150:$1$}]{$$};
    \draw [circle] (2,1)  node (i2)[pin={[red,pin distance=.05in]150:$1$}]{$$};
  \draw [circle] (4,1)  node (i4)[pin={[red,pin distance=.05in]50:$4$}]{$$};
    \draw [circle] (3,2)  node (i3)[pin={[red,pin distance=.05in]150:$1$}]{$$};

\tikzstyle{every node}=[inner sep=1pt, minimum width=14pt,scale=0.8]

    \draw (0,-1)  node (m){$1$};
    \draw (2,-1)  node (l1){$2$};
    \draw (2.8,0)  node (l2){$3$};
     \draw (3.2,0)  node (m2){$4$};
    \draw (5,0)  node (l3){$5$};
    \draw (2,1)  node (l4){$$};
  
    \draw (m) --  (i1) ;
    \draw (i1) --  (l1) ;
    \draw (i1) --  (i2) ;
    \draw (i2) --  (l2) ;
  
    \draw (m2) --  (i4) ;
    \draw (i4) --  (l3) ;
    \draw (i4) --  (i3) ;
    \draw (i3) --  (i2) ;
\end{scope}
 \tikzstyle{every node}=[inner sep=2pt,scale=1.5]
   \draw (4,0)  node {$\circ$};
   \draw (9,0)  node {$=$};

\end{tikzpicture}
 \caption{Example of composition of leaf-labeled rooted trees. The labels near the internal nodes 
correspond to the valency of the nodes.}
 \label{fig:treecomposition}
\end{figure}
Let $T_1$ and $T_2$ be leaf-labeled rooted trees  with label sets $A_1, A_2 \subset \PP$ such that 
$|A_1|=|A_2|$. $T_1$ and $T_2$ are said to be \emph{equivalent} and write 
$T_1\sim T_2$ if we
can obtain $T_2$ from $T_1$ by replacing the labels in $T_1$ according to the unique order
preserving bijection between $A_1$ and $A_2$. 

If $\A$ is a set of leaf-labeled trees, $\A$ is said to have the \emph{label substitution
property} if whenever $T_1\sim T_2$ then $T_1 \in \A$ if and only if $T_2 \in \A$. 
$\A$  is said to have the \emph{unique decomposition property} if for every $T \in \A$ then $T \neq 
T_1 \circ T_2
\circ
\cdots \circ T_k $ for trees $T_j \in \A$, i.e., $T$ cannot be written as a nontrivial 
composition of other trees in $\A$. A set $\A$ with these two properties is 
called an
\emph{alphabet} and any tree in $\A$ is called a \emph{letter}. We can also consider alphabets
$\A_S$ that are formed by colored letters, that is, pairs $(T,s)$ where $T\in \A$ and $s \in S$ for 
some
set $S$. A \emph{link} is the composition of two (colored) letters when
defined.

Assume that $\A_S$ is partitioned into equivalence classes of colored letters and let $K$ be the 
set of equivalence classes. For a 
leaf-labeled tree $T$ constructed composing letters from $\A_S$ we denote by $m_j(T)$ the number of 
letters of the equivalence class $j$ that are in $T$. We also denote by $|T|=\sum_{j \in K} 
m_j(T)$ the total number of letters in $T$.
Consider now a partition of the set of links into two parts, that we will call from 
now on \emph{allowed links} $\L(\A_S)$ and 
\emph{forbidden links} $\overline{\L(\A_S)}$. Let $\T_S^n$ and $\overline{\T_S}^n$ for 
$n\ge 1$ be the 
families of trees constructed exclusively with allowed 
links or exclusively with forbidden links respectively, and whose labels are the elements of the 
set $[n]$, each label occuring exactly once. Define $\T_S=\cup_{n\ge 1} \T_S^n$ and 
$\overline{\T_S}=\cup_{n\ge 1} \overline{\T_S}^n$.
In particular, $\T_S^1=\overline{\T_S}^1=\{\bullet_1\}$ is the tree with a single node labeled 
$1$ and we consider letters in $\A_S$ as if they are both in $\T_S$ and $\overline{\T_S}$.

Define the monomials
\begin{align}
\xx^{m(T)}=\prod_{j\in K}x_j^{m_j(T)},
\end{align}
and the generating functions
  \begin{align}  
 F(y)&=\sum_{n \ge 1}\sum_{T \in \T_S^n}\xx^{m(T)}\frac{y^n}{n!},\\
  \overline{F}(y)&=\sum_{n \ge 1}\sum_{T \in 
\overline{\T}_S^n}(-1)^{|T|}\xx^{m(T)}\frac{y^n}{n!},\nonumber
\end{align}
where $y$ and $x_j$ for $j\in K$ are indeterminates.

The following theorem of Drake \cite{Drake2008} reveals a beautiful algebraic relation between 
the exponential generating 
function for the trees constructed using only allowed links and the exponential generating 
function for the trees constructed using 
only forbidden links. Its proof is a consequence of the combinatorial interpretation of the 
composition of exponential generating functions given in \cite{Stanley1999}.

\begin{theorem}[\cite{Drake2008} Theorem 1.3.3]\label{theorem:drake}
We have
\begin{align}
 F^{\left\langle -1 \right\rangle}(y)=\overline{F}(y).
\end{align}
\end{theorem}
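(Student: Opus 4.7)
The plan is to establish $F(\overline{F}(y)) = y$, which by uniqueness of compositional inverses identifies $\overline{F}(y)$ with $F^{\left\langle -1 \right\rangle}(y)$. Composition and inversion are legitimate formal operations in $\Lambda[[y]]$ because both series have zero constant term and linear coefficient $1$: indeed $\T_S^1 = \overline{\T_S}^1 = \{\bullet_1\}$ and the trivial tree $\bullet_1$ contains no letters, so $f_1(\xx) = \overline{f}_1(\xx) = 1$.

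First I would apply the exponential composition formula for labeled structures (cf.\ \cite{Stanley1999}): for $F,G \in \Lambda[[y]]$ with $G(0)=0$,
\[
[y^n/n!]\, F(G(y)) \;=\; \sum_{\pi \vdash [n]} f_{|\pi|}(\xx) \prod_{B \in \pi} g_{|B|}(\xx),
\]
with the outer sum ranging over set partitions of $[n]$. Taking $G = \overline{F}$ and expanding by the definitions of $f_k$ and $\overline{f}_k$, the coefficient of $y^n/n!$ in $F(\overline{F}(y))$ becomes a signed sum over triples $(\pi, T_0, (T_B)_{B \in \pi})$ with $T_0 \in \T_S^{|\pi|}$ and each $T_B \in \overline{\T_S}^{|B|}$, weighted by $(-1)^{\sum_B |T_B|}\, \xx^{m(T_0) + \sum_B m(T_B)}$.

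Next I would interpret each such triple as a single ``mixed'' tree $T$ on $[n]$ built from letters of $\A_S$: relabeling each leaf of $T_0$ (nominally indexed by a block $B$) with the valency of the root of $T_B$ and then performing the tree composition grafts each $T_B$ onto $T_0$. The unique decomposition property of $\A_S$ guarantees that distinct triples assemble to distinct trees $T$, and that the resulting $T$ retains a canonical partition of its letters into an ``allowed skeleton'' $T_0$ and ``forbidden blocks'' $T_B$. The signed weight becomes $(-1)^{k(T)}\,\xx^{m(T)}$, where $k(T) = \sum_B |T_B|$ counts the letters lying in forbidden blocks. For $n \ge 2$ I would then construct a sign-reversing, weight-preserving involution pairing these mixed trees. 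A natural candidate singles out a canonical letter of $T$---for instance, the root letter of the forbidden block containing the leaf labeled $1$---and toggles its allegiance between the skeleton $T_0$ and an adjacent forbidden block, changing $k(T)$ by exactly one (flipping the sign) while leaving the underlying letter content of $T$ and hence $\xx^{m(T)}$ fixed. The only surviving contribution is $n=1$, where the unique triple $(\{\{1\}\}, \bullet_1, \bullet_1)$ contributes $1$, yielding $F(\overline{F}(y)) = y$.

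The principal obstacle is formalizing this involution so that it is well-defined and truly involutive across every mixed tree: one must specify the canonical letter unambiguously and handle degeneracies such as a toggle emptying a forbidden block, collapsing the skeleton to a single letter, or merging two forbidden components into one. A cleaner alternative---and presumably closer to the original arguments of \cite{Drake2008} and \cite{Dotsenko2012}---is to bypass the explicit involution and extract the cancellation directly from the structure of the compositional formula combined with the unique decomposition property, noting that for $n \ge 2$ the signed sum telescopes when one decomposes each mixed tree according to the status (allowed vs.\ forbidden) of the composition link nearest the distinguished leaf $1$.
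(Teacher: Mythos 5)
Your overall strategy---reduce to showing $F(\overline{F}(y))=y$ via the exponential composition formula and then cancel the composite trees with $n\ge 2$ by a sign-reversing involution---is exactly the strategy of the paper's proof (Lemma \ref{lemma:132} followed by the involution $\iota$). The gap sits in the one place where all the difficulty of this theorem lives: the choice of the canonical letter to toggle. Your candidate, ``the root letter of the forbidden block containing the leaf labeled $1$,'' fails for two concrete reasons. First, it is determined by the factorization rather than by the underlying tree $T$ alone, so there is no guarantee that the same letter is selected before and after the toggle, and involutivity breaks. Second, and more seriously, the toggle need not produce a valid composite tree: absorbing that root letter into the allowed skeleton requires the link joining it to its parent to be an \emph{allowed} link, and nothing forces this. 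Your fallback---cancelling according to the status of the composition link nearest a distinguished smallest leaf---is essentially Drake's original argument, and Remark \ref{remark:correction} of the paper exhibits an explicit composite tree (Figure \ref{fig:counterexample}) for which that rule sends a composite tree to an object that is not a composite tree at all.

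The paper's fix is a different descent rule: starting from the letter containing the root, repeatedly move to the child joined to the current letter by an \emph{allowed} link having smallest \emph{valency}, and stop at the first letter $R_0$ every non-leaf child of which is joined to it by a forbidden link. One then checks three things: that $R_0$ depends only on the underlying tree with its link structure (so the same letter is selected before and after toggling); that if $R_0$ lies in $T^{a}$ then every non-leaf child of $R_0$ is the root of a forbidden block, so the entire subtree below $R_0$ consists of forbidden links and may be pushed into the forbidden forest; and that if $R_0$ lies outside $T^{a}$ then it is the root of a forbidden block attached to $T^{a}$ by an allowed link, so $T^{a}\circ R_0$ is a valid allowed skeleton. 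Each verification is short, but without some such rule the involution---and hence the proof---is incomplete, which is precisely why the paper supplies its own corrected argument rather than citing Drake's.
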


There is a gap in the argument in the original proof of Theorem 
\ref{theorem:drake} in \cite{Drake2008}. For the sake of completeness we provide a proof of this 
Theorem fixing this gap.

We begin by considering the set of leaf-labeled rooted trees $T$ constructed as follows: Starting 
from an ordered partition $\pi=(\pi_1,\dots,\pi_{\ell})$ of $[n]$ (that is, where the blocks are 
linearly ordered), $T$ is of the form $T^{a} \circ T_1^{f} 
\circ T_2^{f}\circ \cdots \circ T_{\ell}^{f}$ where $T^{a} \in \T_S$ and $T_i^{f} \in 
\overline{\T_S}$ for all $i$, with the condition that $T_i^{f}$ has label set $\pi_i$ and $T^{a}$ 
has label 
set $\{v(T_i^{f})\}_{i=1}^{\ell}$. Note that different factorizations (different partitions and set 
of subtrees) of the form above can create the same underlying tree $T$. Indeed, the links between 
the tree $T^{a}$ and the trees $T_i^{f}$ can be either in $\L(\A_S)$ or in  
$\overline{\L(\A_S)}$ and hence there could be multiple choices as to where $T^{a}$ finishes and 
$T_i^{f}$ starts. Here we want to consider  
different factorizations of the same tree $T$ as different objects. 
A tree $T$ together with a factorization as above is called a
 \emph{$(\T_S,\overline{\T_S})$-composite tree}. 
 
 \begin{example}\label{example:compositetrees} We can consider the alphabet formed by planar rooted 
trees with exactly one internal node and three leaves with distinct labels. We choose the set of 
forbidden links to be formed by connecting one of the trees in our alphabet as the middle child of 
another tree. Hence the set of allowed links are the ones where trees are not connected in a middle 
child.
  In Figure \ref{fig:examplecompositetree} we illustrate 
two examples of $(\T_S,\overline{\T_S})$-composite trees under these conditions. Note that even 
though both of these composite trees have the same underlying tree they correspond to two 
different factorizations. The underlying ordered partition of the composite tree on the left is 
$1|6\underline{10}\,\underline{11}|9|4|23578$ while the one of the tree on the right is 
$1|6\underline{10}\,\underline{11}|9|4|2|358|7$.
 \end{example}

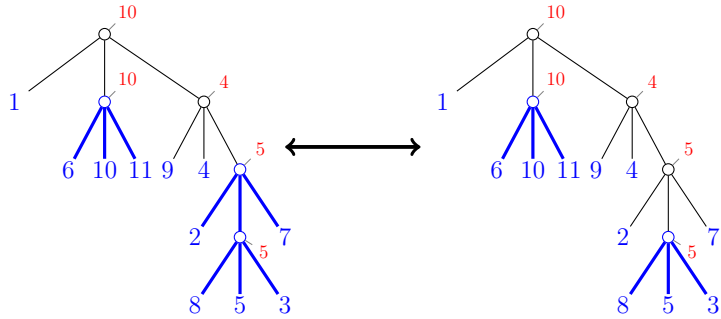
\begin{figure}[ht] 
 \begin{tikzpicture}[scale=0.6]
\draw[line width=0.02in,<->] (5,2) -- (8,2);
\begin{scope}[xshift=0]

\tikzstyle{every node}=[draw,inner sep=0.9mm,scale=0.6]
    \draw [circle] (1,4.5)  node (i1)[pin={[color=red,pin distance=0.2cm]45:$10$}]{$$};
	\draw  [color=blue,circle](1,3)  node (i2)[pin={[color=red,pin distance=0.2cm]45:$10$}]{$$};
   \draw  [color=black,circle](3.2,3)  node (i3)[pin={[color=red,pin distance=0.2cm]45:$4$}]{$$};
\draw  [color=blue,circle](4,1.5)  node (i4)[pin={[color=red,pin distance=0.2cm]45:$5$}]{$$};
   \draw  [color=blue,circle](4,0)  node (i5)[pin={[color=red,pin distance=0.2cm]-20:$5$}]{$$};

\tikzstyle{every node}=[inner sep=1pt, minimum width=14pt,scale=0.8]

    	\draw [color=blue](-1,3)  node (l1){$1$};
    	\draw  [color=blue](0.2,1.5)  node (l2){$6$};
   	\draw  [color=blue](1,1.5)  node (l3){$10$};
	\draw  [color=blue](1.8,1.5)  node (l4){$11$};
   	\draw  [color=blue](2.4,1.5)  node (l5){$9$};
	\draw  [color=blue](3.2,1.5)  node (l6){$4$};
   	\draw  [color=blue](3,0)  node (l7){$2$};
\draw  [color=blue](3,-1.5)  node (l8){$8$};
   	\draw  [color=blue](4,-1.5)  node (l9){$5$};
	\draw  [color=blue](5,-1.5)  node (l10){$3$};
   	\draw  [color=blue](5,0)  node (l11){$7$};

    \draw (i1) --  (l1) ;
    \draw (i1) --  (i2) ;
    \draw (i1) --  (i3) ;
	\draw[color=blue,very thick] (i2) --  (l2) ;
    \draw[color=blue,very thick] (i2) --  (l3) ;
    \draw[color=blue,very thick] (i2) --  (l4) ;
	\draw (i3) --  (l5) ;
    \draw (i3) --  (l6) ;
    \draw(i3) --  (i4) ;
	\draw[color=blue,very thick]  (i4) --  (l7) ;
    \draw[color=blue,very thick]  (i4) --  (i5) ;
    \draw[color=blue,very thick] (i4) --  (l11) ;
   \draw[color=blue,very thick]  (i5) --  (l8) ;
    \draw[color=blue,very thick]  (i5) --  (l9) ;
    \draw[color=blue,very thick] (i5) --  (l10) ;
\end{scope}

\begin{scope}[xshift=270]

\tikzstyle{every node}=[draw,inner sep=0.9mm,scale=0.6]
    \draw [circle] (1,4.5)  node (i1)[pin={[color=red,pin distance=0.2cm]45:$10$}]{$$};
	\draw  [color=blue,circle](1,3)  node (i2)[pin={[color=red,pin distance=0.2cm]45:$10$}]{$$};
   \draw  [color=black,circle](3.2,3)  node (i3)[pin={[color=red,pin distance=0.2cm]45:$4$}]{$$};
\draw  [color=black,circle](4,1.5)  node (i4)[pin={[color=red,pin distance=0.2cm]45:$5$}]{$$};
   \draw  [color=blue,circle](4,0)  node (i5)[pin={[color=red,pin distance=0.2cm]-20:$5$}]{$$};

\tikzstyle{every node}=[inner sep=1pt, minimum width=14pt,scale=0.8]

    	\draw [color=blue](-1,3)  node (l1){$1$};
    	\draw  [color=blue](0.2,1.5)  node (l2){$6$};
   	\draw  [color=blue](1,1.5)  node (l3){$10$};
	\draw  [color=blue](1.8,1.5)  node (l4){$11$};
   	\draw  [color=blue](2.4,1.5)  node (l5){$9$};
	\draw  [color=blue](3.2,1.5)  node (l6){$4$};
   	\draw  [color=blue](3,0)  node (l7){$2$};
\draw  [color=blue](3,-1.5)  node (l8){$8$};
   	\draw  [color=blue](4,-1.5)  node (l9){$5$};
	\draw  [color=blue](5,-1.5)  node (l10){$3$};
   	\draw  [color=blue](5,0)  node (l11){$7$};

    \draw (i1) --  (l1) ;
    \draw (i1) --  (i2) ;
    \draw (i1) --  (i3) ;
	\draw[color=blue,very thick] (i2) --  (l2) ;
    \draw[color=blue,very thick] (i2) --  (l3) ;
    \draw[color=blue,very thick] (i2) --  (l4) ;
	\draw (i3) --  (l5) ;
    \draw (i3) --  (l6) ;
    \draw(i3) --  (i4) ;
	\draw  (i4) --  (l7) ;
    \draw  (i4) --  (i5) ;
    \draw (i4) --  (l11) ;
   \draw[color=blue,very thick]  (i5) --  (l8) ;
    \draw[color=blue,very thick]  (i5) --  (l9) ;
    \draw[color=blue,very thick] (i5) --  (l10) ;
\end{scope}

\end{tikzpicture}
 \caption{Two $(\T_S,\overline{\T_S})$-composite trees.}
 \label{fig:examplecompositetree}
\end{figure}

\begin{lemma}[\cite{Drake2008} Lemma 1.3.2]\label{lemma:132}
The composition $F(\overline{F}(y))$ is the exponential generating function for 
$(\T_S,\overline{\T_S})$-composite trees $T$ weighted by $(-1)^{m_f}\xx^{m(T)}$ where $m_f$ is the 
number of 
letters in the forbidden trees.
\end{lemma}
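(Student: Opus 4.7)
The plan is to unpack $F(\overline{F}(y))$ via the standard combinatorial interpretation of composition of exponential generating functions (cf. \cite{Stanley1999}) and then match the resulting enumeration term-by-term with the definition of $(\T_S,\overline{\T_S})$-composite trees.

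First, I would write $F(y)=\sum_{\ell\ge 1}f_\ell(\xx)y^\ell/\ell!$ and $\overline{F}(y)=\sum_{k\ge 1}\overline{f}_k(\xx)y^k/k!$ (note the constant terms vanish, so the composition is well-defined) and apply the compositional formula, which yields
\begin{align*}
F(\overline{F}(y))=\sum_{n\ge 1}\frac{y^n}{n!}\sum_{\pi}f_{|\pi|}(\xx)\prod_{B\in\pi}\overline{f}_{|B|}(\xx),
\end{align*}
where the inner sum is over set partitions $\pi=\{B_1,\dots,B_\ell\}$ of $[n]$. After expanding each $f_{|\pi|}$ and $\overline{f}_{|B|}$ as sums over trees in $\T_S^\ell$ and $\overline{\T_S}^{|B|}$ respectively, each term of the expansion corresponds to a choice of (i) a set partition $\pi=\{B_1,\dots,B_\ell\}$ of $[n]$, (ii) a forbidden-link tree $T_i^{f}\in\overline{\T_S}^{|B_i|}$ with leaf label set $B_i$ for each $i$, and (iii) an allowed-link tree $T^a\in\T_S^\ell$ with canonical leaf label set $[\ell]$.

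Second, I would exhibit the bijection between these triples and $(\T_S,\overline{\T_S})$-composite trees on $[n]$. Given such a triple, each $T_i^{f}$ has a well-defined valency $v(T_i^{f})\in B_i$, and the set $\{v(T_1^{f}),\dots,v(T_\ell^{f})\}$ has cardinality $\ell$. By the label substitution property of the alphabet, there is a unique tree in the equivalence class of $T^a$ whose leaves are labeled by $\{v(T_1^{f}),\dots,v(T_\ell^{f})\}$ via the order-preserving bijection from $[\ell]$; taking the composition $T^a\circ T_1^{f}\circ\cdots\circ T_\ell^{f}$ produces precisely a $(\T_S,\overline{\T_S})$-composite tree on $[n]$, together with its factorization data. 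Conversely, every composite tree recovers a unique triple. Finally, the monomial weight $\xx^{m(T)}$ factors multiplicatively across the letters of the composite tree, and the sign accumulates as $\prod_i(-1)^{|T_i^{f}|}=(-1)^{m_f}$, matching the claimed weight.

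The main obstacle, and the place where Drake's original argument has a gap, is the careful bookkeeping in the second step: one must verify that, after quotienting $\T_S^\ell$ by relabeling and accounting for the $1/\ell!$ from $f_\ell/\ell!$ versus the ordered factorization $T^a\circ T_1^{f}\circ\cdots\circ T_\ell^{f}$, the set partition data $\pi$ (which is unordered) matches the ordered sequence of forbidden subtrees exactly once. The cleanest way to handle this is to keep the data ordered throughout, pairing ordered partitions $(B_1,\dots,B_\ell)$ of $[n]$ with allowed-link trees on $[\ell]$ (no $1/\ell!$ symmetrization needed on the ordered side, since the leaves of $T^a$ are distinguishable), and then to check that the composition $T^a\circ T_1^{f}\circ\cdots\circ T_\ell^{f}$ depends only on the underlying composite tree and not on the ordering of the $T_i^{f}$.
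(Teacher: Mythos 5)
Your argument is correct and is essentially the paper's proof: the paper disposes of this lemma by citing the standard combinatorial interpretation of composition of exponential generating functions, and your two steps spell out exactly that interpretation together with the routine relabeling and ordered-versus-unordered bookkeeping. One small correction: the gap in Drake's argument that the paper repairs is not located in this lemma's bookkeeping but in the proof of Theorem \ref{theorem:drake} itself, where the sign-reversing involution $\iota$ on composite trees is not well defined (see Remark \ref{remark:correction}); the identification of $F(\overline{F}(y))$ with $(\T_S,\overline{\T_S})$-composite trees is the uncontroversial part.
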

\begin{proof}
This follows from a classical combinatorial interpretation of composition of exponential generating 
functions (see \cite[Theorem 5.1.4]{Stanley1999}).
\end{proof}

\begin{proof}[Proof of Theorem \ref{theorem:drake}] Using Lemma \ref{lemma:132} we only need to 
show 
that the weighted exponential 
generating function for $(\T_S,\overline{\T_S})$-composite trees is equal to $y$.
 We define a sign-reversing involution $\iota$ on the set of 
$(\T_S,\overline{\T_S})$-composite trees where the only fixed point is the tree with a 
single node (whose factorization is unique).
Let $\mathfrak{T}=T^{a} \circ T_1^{f} \circ T_2^{f}\circ \cdots \circ T_{\ell}^{f}$ be a  
$(\T_S,\overline{\T_S})$-composite tree. 
 Now, recursively and starting at the letter $R$
that contains the root of $\mathfrak{T}$, we move to the child of $R$ with smallest valency among 
the ones that have an allowed link with $R$. Note that the choice of this child is unique since 
the children of $R$ have disjoint sets of descendants and each leaf of $\mathfrak{T}$ has 
a unique label. The fact that the valency is a recursive assignment implies that every child of $R$ 
has a different valency label. We continue recursively repeating the same process until we find a 
letter $R_0$ whose children are either leaves or form forbidden links with $R_0$. The map 
works as follows:
If $R_0$ is a letter of $T^{a}$ then $\iota(\mathfrak{T})$ is the factorization
where the tree starting at $R_0$ with all its descendants is part of the forest of trees with 
forbidden 
links. 
If $R_0$ is not a letter of $T^{a}$ then $\iota(\mathfrak{T})$ is the factorization where 
$T^{a}\circ R_0$ is the new tree with allowed links. Note that the map is well-defined since every 
child of $R_0$ is either a leaf or forms a forbidden link with $R_0$. Note also that the choice of 
$R_0$ does not depend on the factorization of the underlying tree $T$ and every time we apply the 
process the role of $R_0$ changes between being part of the tree with allowed links or being part 
of 
a tree in the forest of trees with forbidden links. Therefore this process is an involution that 
reverses the sign as defined in Lemma \ref{lemma:132}. 
\end{proof}

\begin{example}
In Figure \ref{fig:examplecompositetree} we illustrate two trees (under the same conditions as in 
Example \ref{example:compositetrees}) that are related by the involution $\iota$ described in the 
proof of Theorem \ref{theorem:drake}. In this example $R_0$ is the tree whose 
children have labels $2$, $5$ and $7$. Note that the number of letters in the set of forbidden 
trees changes by one and hence also the weight  $(-1)^{m_f}\xx^{m(T)}$  alternates sign 
between these two composite trees.
\end{example}

\begin{remark}\label{remark:correction}
The proof of Theorem 1.3.3 in \cite{Drake2008} defines the map $\iota$ as follows: 
First select a letter $R_0$ of $T^{a}$ traveling always from the root to the smallest label 
 that has a letter of $T^{a}$ substituted in (it is assumed that the trees are planar and the 
valency rule chooses the leftmost label in the 
subtree). For this letter $R_0$ either all children are 
leaves or it has at least one child forming a link with 
a tree in $\mathfrak{T}$. If every child is a 
leaf then $\iota(\mathfrak{T})$ is the factorization where $R_0$ is considered as a tree in 
$\overline{\T_S}$. Otherwise let $R_1$ be the letter substituted into the child with smallest 
label. If $R_0 \circ R_1$ is an allowed link then $\iota(\mathfrak{T})$ is the factorization where 
$R_0 \circ R_1$ is part of the tree in $\T_S$. Otherwise make $R_0$ part of the forest of 
trees with forbidden links.

The issue here is that $\iota$ is not a well-defined map. For example, 
assume that we are considering 
planar letters with the conditions in Example \ref{example:compositetrees}. Then the 
$(\T_S,\overline{\T_S})$-composite tree in the left of Figure 
\ref{fig:counterexample} does not have a well defined image. In the figure, the 
factorization suggested for the tree in the left has underlying ordered partition 
$\pi=1|234|5|6|7$. The reader can check that the process described in the previous paragraph, will 
assign the new factorization of the tree in the right of Figure \ref{fig:counterexample} that is 
not a $(\T_S,\overline{\T_S})$-composite tree (recall that forbidden trees can only have 
links involving middle children).  

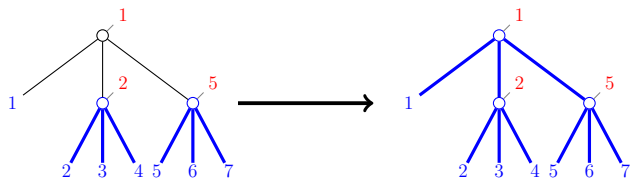
\begin{figure}[ht]
 \begin{tikzpicture}[scale=0.6]
\draw[line width=0.02in,->] (4,0) -- (7,0);
\begin{scope}[xshift=0]
\tikzstyle{every node}=[draw,inner sep=0.9mm,scale=0.6]
    \draw [circle] (1,1.5)  node (i1)[pin={[color=red,pin distance=0.2cm]45:$1$}]{$$};
	\draw  [color=blue,circle](1,0)  node (i2)[pin={[color=red,pin distance=0.2cm]45:$2$}]{$$};
   \draw  [color=blue,circle](3,0)  node (i3)[pin={[color=red,pin distance=0.2cm]45:$5$}]{$$};

\tikzstyle{every node}=[inner sep=1pt, minimum width=14pt,scale=0.6]

    	\draw [color=blue](-1,0)  node (l1){$1$};
    	\draw  [color=blue](0.2,-1.5)  node (l2){$2$};
   	\draw  [color=blue](1,-1.5)  node (l3){$3$};
	\draw  [color=blue](1.8,-1.5)  node (l4){$4$};
   	\draw  [color=blue](2.2,-1.5)  node (l5){$5$};
	\draw  [color=blue](3,-1.5)  node (l6){$6$};
   	\draw  [color=blue](3.8,-1.5)  node (l7){$7$};

    \draw (i1) --  (l1) ;
    \draw (i1) --  (i2) ;
    \draw (i1) --  (i3) ;
	\draw[color=blue,very thick] (i2) --  (l2) ;
    \draw[color=blue,very thick] (i2) --  (l3) ;
    \draw[color=blue,very thick] (i2) --  (l4) ;
	\draw[color=blue,very thick] (i3) --  (l5) ;
    \draw[color=blue,very thick] (i3) --  (l6) ;
    \draw[color=blue,very thick] (i3) --  (l7) ;
\end{scope}

\begin{scope}[xshift=250]
\tikzstyle{every node}=[draw,inner sep=0.9mm,scale=0.6]
    \draw [color=blue,circle] (1,1.5)  node (i1)[pin={[color=red,pin distance=0.2cm]45:$1$}]{$$};
	\draw  [color=blue,circle](1,0)  node (i2)[pin={[color=red,pin distance=0.2cm]45:$2$}]{$$};
   \draw  [color=blue,circle](3,0)  node (i3)[pin={[color=red,pin distance=0.2cm]45:$5$}]{$$};

\tikzstyle{every node}=[inner sep=1pt, minimum width=14pt,scale=0.6]

    	\draw [color=blue](-1,0)  node (l1){$1$};
    	\draw  [color=blue](0.2,-1.5)  node (l2){$2$};
   	\draw  [color=blue](1,-1.5)  node (l3){$3$};
	\draw  [color=blue](1.8,-1.5)  node (l4){$4$};
   	\draw  [color=blue](2.2,-1.5)  node (l5){$5$};
	\draw  [color=blue](3,-1.5)  node (l6){$6$};
   	\draw  [color=blue](3.8,-1.5)  node (l7){$7$};

    \draw[color=blue,very thick]  (i1) --  (l1) ;
    \draw[color=blue,very thick]  (i1) --  (i2) ;
    \draw[color=blue,very thick]  (i1) --  (i3) ;
	\draw[color=blue,very thick] (i2) --  (l2) ;
    \draw[color=blue,very thick] (i2) --  (l3) ;
    \draw[color=blue,very thick] (i2) --  (l4) ;
	\draw[color=blue,very thick] (i3) --  (l5) ;
    \draw[color=blue,very thick] (i3) --  (l6) ;
    \draw[color=blue,very thick] (i3) --  (l7) ;
\end{scope}
\end{tikzpicture}

 \caption{Counterexample to the map defined in \cite{Drake2008}.}
  \label{fig:counterexample}
\end{figure}

\end{remark}

In the following we will be using an alphabet of colored planar binary 
letters of the form:
\begin{center}
   \begin{tikzpicture}[scale=0.7]
\begin{scope}[xshift=0,yshift=-1cm]

\tikzstyle{every node}=[draw,inner sep=1mm,scale=0.8]
    \draw [circle] (1,1.5)  node (i1){$$};

\tikzstyle{every node}=[inner sep=1pt, minimum width=14pt,scale=0.8]

    \draw (0,0)  node (m){$a$};
    \draw (2,0)  node (l1){$b$};

    \draw (m) --  (i1) ;
    \draw (i1) --  (l1) ;
\end{scope}

\end{tikzpicture}
\end{center}
where $a<b$ and with colors from $\PP$. The possible links are of the form:
\begin{center}
  \begin{tikzpicture}[scale=0.7]
\begin{scope}[xshift=0,yshift=-1cm]

\tikzstyle{every node}=[draw,inner sep=1mm,scale=0.8]
    \draw [circle] (1,1)  node (i1)[pin={[red,pin distance=0.05in]150:$a$}]{$$};
    \draw [circle] (2,2)  node (i2)[pin={[red,pin distance=0.05in]150:$a$}]{$$};

\tikzstyle{every node}=[inner sep=1pt, minimum width=14pt,scale=0.8]

    \draw (0,0)  node (m){$a$};
    \draw (2,0)  node (l1){$b$};
    \draw (3,1)  node (l2){$c$};

    \draw (m) --  (i1) ;
    \draw (i1) --  (l1) ;
    \draw (i1) --  (i2) ;
    \draw (i2) --  (l2) ;
\end{scope}

\begin{scope}[xshift=4cm,yshift=-1cm]

\tikzstyle{every node}=[draw,inner sep=1mm,scale=0.8]
    \draw [circle] (1,1)  node (i1)[pin={[red,pin distance=0.05in]150:$a$}]{$$};
    \draw [circle] (2,2)  node (i2)[pin={[red,pin distance=0.05in]150:$a$}]{$$};

\tikzstyle{every node}=[inner sep=1pt, minimum width=14pt,scale=0.8]

    \draw (0,0)  node (m){$a$};
    \draw (2,0)  node (l1){$c$};
    \draw (3,1)  node (l2){$b$};

    \draw (m) --  (i1) ;
    \draw (i1) --  (l1) ;
    \draw (i1) --  (i2) ;
    \draw (i2) --  (l2) ;
\end{scope}
\begin{scope}[xshift=7cm,yshift=-1cm]

\tikzstyle{every node}=[draw,inner sep=1mm,scale=0.8]
    \draw [circle] (3,1)  node (i1)[pin={[red,pin distance=0.05in]50:$b$}]{$$};
    \draw [circle] (2,2)  node (i2)[pin={[red,pin distance=0.05in]50:$a$}]{$$};

\tikzstyle{every node}=[inner sep=1pt, minimum width=14pt,scale=0.8]

    \draw (2,0)  node (m){$b$};
    \draw (4,0)  node (l1){$c$};
    \draw (1,1)  node (l2){$a$};

    \draw (m) --  (i1) ;
    \draw (i1) --  (l1) ;
    \draw (i1) --  (i2) ;
    \draw (i2) --  (l2) ;
\end{scope}

\end{tikzpicture}
 \end{center}
 where $a<b<c$.

\subsection{Normalized binary trees}\label{subsection:normalizedtrees}
For each internal node $x$ of a labeled binary tree, let $L(x)$ denote the left child of $x$ and 
$R(x)$
denote its right child.  For each node $x$ of a labeled binary tree $(T,\sigma)$ define
its  \emph{valency} $v(x)$ to be the smallest leaf label of the subtree rooted at $x$. Figure 
\ref{fig:lyndonandvalency} illustrates the valencies of the internal nodes of a labeled 
binary tree. 

We say that a labeled  binary tree  is \emph{normalized} if  the leftmost leaf of each subtree has
the smallest label in the subtree. This is equivalent to requiring that for every internal node $x$,
\begin{align*}
 v(x)=v(L(x)).
\end{align*}

Note that a normalized tree can be regarded as a labeled nonplanar binary tree (or phylogenetic 
tree) that has been drawn in the plane following the convention above. We denote by $\nor_n$ the 
set 
of normalized labeled binary trees on label set $[n]$. 
It is well-known that there are  
$(2n-3)!!:=1\cdot 3 \cdots (2n-3)$  phylogenetic trees 
on $[n]$ and so 
$|\nor_n|=(2n-3)!!$.

 A \emph{Lyndon tree} is a normalized tree $(T,\sigma)$ such that for every internal node $x$ of 
$T$ we have \begin{align} \label{equation:lynnode} v(R(L(x))> v(R(x)).\end{align}

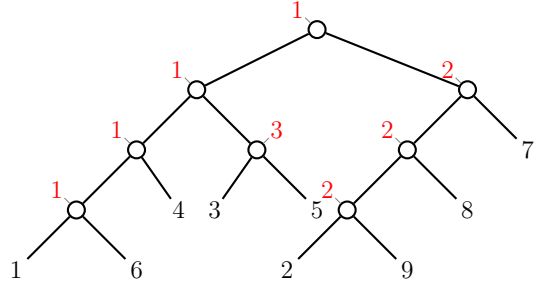
\begin{figure}[ht]
        \centering
        \begin{tikzpicture}[thick,scale=0.8]

\tikzstyle{every node}=[circle,draw,inner sep=0pt, minimum width=8pt,scale=0.8]

    \draw [circle,color=black] (6,4)  node (i1)[pin={[pin distance=0.05in] above left:\color{red}1}]{};
    \draw [circle,color=black] (8.5,3)  node (i2)[pin={[pin distance=0.05in]above left:\color{red}2}]{};
    \draw [circle,color=black] (7.5,2)  node (i3)[pin={[pin distance=0.05in]above left:\color{red}2}]{};
    \draw [color=black] (6.5,1)  node (i4)[pin={[pin distance=0.05in]above left:\color{red}2}]{};
    \draw [color=black] (4,3)  node (i5)[pin={[pin distance=0.05in]above left:\color{red}1}]{};
    \draw [color=black] (5,2)  node (i6)[pin={[pin distance=0.05in]above right:\color{red}3}]{};
    \draw [circle,color=black] (3,2)  node (i7)[pin={[pin distance=0.05in]above left:\color{red}1}]{};
    \draw [color=black] (2,1)  node (i8)[pin={[pin distance=0.05in]above left:\color{red}1}]{};
\tikzstyle{every node}=[inner sep=1pt, minimum width=14pt,scale=0.8]

    \draw (4.3,1)  node (l1){3};
    \draw (5.5,0)  node (l2){2};
    \draw (1,0)  node (l3){1};
    \draw (3,0)  node (l4){6};
    \draw (6,1)  node (l5){5};
    \draw (3.7,1)  node (l6){4};
    \draw (7.5,0)  node (l7){9};
    \draw (9.5,2)  node (l8){7};
    \draw (8.5,1)  node (l9){8};

    \draw (i1) --  (i2) ;
    \draw (i1) --  (i5) ;
    \draw (i2) --  (i3) ;
    \draw (i2) --  (l8) ;
    \draw (i3) --  (i4) ;
    \draw (i3) --  (l9) ;
    \draw (i4) --  (l7) ;
    \draw (i4) --  (l2) ;
    \draw (i5) --  (i6) ;
    
    \draw (i5) --  (i7) ;
    \draw (i6) --  (l5) ;
    \draw (i6) --  (l1) ;
    \draw (i7) --  (l6) ;
    \draw (i7) --  (i8) ;
    \draw (i8) --  (l3) ;
    \draw (i8) --  (l4) ;
\end{tikzpicture}
 \caption{Example of a Lyndon tree. The numbers above the  lines correspond to the valencies of the
internal nodes.}
\label{fig:lyndonandvalency}
  \end{figure}

 We will say that an internal node $x$ of a labeled binary tree $(T,\sigma)$ is a \emph{Lyndon
node} if  (\ref{equation:lynnode}) holds.  
Hence $(T,\sigma)$ is a Lyndon tree if and only if it
is normalized and all its internal nodes are Lyndon nodes. A Lyndon tree is illustrated in Figure 
\ref{fig:lyndonandvalency}. It is known that the set of Lyndon trees with $n$ leaves gives a basis 
for the multilinear component of the free Lie algebra on $n$ generators (see for example 
\cite{Wachs1998}).

\subsection{Colored normalized trees}\label{subsection:colorednormalizedtrees}

We will also be considering labeled binary trees with colored internal nodes. A \emph{colored 
labeled binary tree} is a labeled binary tree such that every internal node $x$ has been assigned a 
\emph{color} $\clr(x) \in \PP$. For a weak composition $\mu \in \wcomp_{n-1}$ we denote $\BT_{\mu}$ 
the set of colored labeled binary trees that contain exactly $\mu_j$ internal nodes colored $j$ for 
each $j$.

A \emph{colored Lyndon tree} is a normalized binary tree such that for any node $x$ that is not
a Lyndon node the following condition must be satisfied:
\begin{align}\label{equation:lyndoncondition}
 \clr(L(x))>\clr(x).
\end{align}
For $\mu \in \wcomp_{n-1}$, let $\lyn_{\mu}$ be the set of colored Lyndon trees in $\BT_{\mu}$ and
$\lyn_{n}=\cup_{\mu \in \wcomp_{n-1}} \lyn_{\mu}$. 
Note that equation (\ref{equation:lyndoncondition}) implies that the monochromatic
Lyndon trees are just the classical Lyndon trees. Figure \ref{fig:coloredlyndontype} shows an 
example of a colored Lyndon tree.

A \emph{colored comb} is a  normalized colored binary tree that satisfies the following
coloring restriction:  for each internal node $x$ whose right child $R(x)$ is not a leaf, 
\begin{align}\label{equation:combcondition}
 \clr(x)>\clr(R(x)).
\end{align}

Let $\comb_{\mu}$ be the set of colored combs in $\BT_{\mu}$ and $\comb_{n}$
the set of all colored combs. Figure \ref{fig:combtype} shows an 
example of a colored comb. Note that in a monochromatic comb every right child has to be 
a leaf and hence they are the classical left combs that are known to give a basis for the 
multilinear component of the free Lie algebra on $n$ generators $\lie(n)$ (see 
\cite[Proposition~2.3]{Wachs1998}). The $\mu$-colored Lyndon trees and combs generalize the 
classical Lyndon trees and combs and both give bases for the $\sym_n$-module $\lie(\mu)$ in 
\cite{Dleon2013} (see also \cite{DleonWachs2013}).

Using Drake's approach we have another perspective to define these types of
trees.

Consider the alphabet $\A_{\PP}$ with letters:
\begin{center}
  \begin{tikzpicture}[scale=0.7,pin distance=0.2cm]
\begin{scope}[xshift=0,yshift=-1cm]

\tikzstyle{every node}=[draw,circle,inner sep=0.5,scale=0.8]
    \draw [circle] (1,1.5)  node (i1)[]{$c$};

\tikzstyle{every node}=[inner sep=1pt, minimum width=14pt,scale=0.8]

    \draw (0,0)  node (m){$a$};
    \draw (2,0)  node (l1){$b$};

    \draw (m) --  (i1) ;
    \draw (i1) --  (l1) ;
\end{scope}

\end{tikzpicture}
\end{center}
where $c \in \PP$ is any color and $a<b$ .

To define the colored Lyndon trees we consider the following forbidden links:
\begin{center}
  \begin{tikzpicture}[scale=0.7,pin distance=0.2cm]
\begin{scope}[xshift=0,yshift=-1cm]

\tikzstyle{every node}=[draw,inner sep=0.5,scale=0.8]
    \draw [circle] (1,1)  node (i1){\small$c_1$};
    \draw [circle] (2,2)  node (i2){\small$c_2$};

\tikzstyle{every node}=[inner sep=1pt, minimum width=14pt,scale=0.8]

    \draw (0,0)  node (m){$a$};
    \draw (2,0)  node (l1){$b$};
    \draw (3,1)  node (l2){$c$};

    \draw (m) --  (i1) ;
    \draw (i1) --  (l1) ;
    \draw (i1) --  (i2) ;
    \draw (i2) --  (l2) ;
\end{scope}

\end{tikzpicture}
\end{center}
with $a<b<c$ and $c_1 \le c_2$, i.e., the colors weakly increase towards the root. Then the
allowed trees are colored Lyndon trees since they satisfy condition 
(\ref{equation:lyndoncondition}) and the forbidden trees are of the form:
\begin{center}
  \begin{tikzpicture}[scale=0.7]

\tikzstyle{every node}=[draw,circle,inner sep=0, minimum width=22,scale=0.8]

    \draw (1,1)  node (i1){\small $c_{1}$};
    \draw (2,2)  node (i2){\small $c_2$};
    \draw (4,4)  node (i4){\small $c_{n-1}$};
    \draw (3,3)  node (i3){\small $c_{n-2}$};

\tikzstyle{every node}=[inner sep=0pt, minimum width=14pt,scale=0.8]

    \draw (0,0)  node (m){$1$};
    \draw (2,0)  node (l1){$2$};
    \draw (3,1)  node (l2){$3$};
    \draw (4,2)  node (l3){$n-1$};
    \draw (5,3)  node (l4){$n$};

    \draw (m) --  (i1) ;
    \draw (i1) --  (l1) ;
    \draw (i2) --  (l2) ;
    \draw (i3) --  (l3) ;
    \draw (i4) --  (l4) ;
    \draw (i1) --  (i2) ;
    \draw [dashed, thick] (i2) --  (i3) ;
    \draw [dotted, thick] (2.6,1.6) --  (3.3,2.3) ;

    \draw (i3) --  (i4) ;

\end{tikzpicture}
\end{center}
with $c_1\le c_2 \le \cdots \le c_{n-1}$. Since we can completely characterize any such tree by 
defining how many times the color $i$ appears among the $n-1$ internal nodes for each $i \in \PP$, 
and 
considering that \begin{align*}
 h_{n}(\xx):=\sum_{1\le i_1 \le i_2 \le \cdots \le i_n}x_{i_1}x_{i_2}\cdots x_{i_n},
\end{align*}
we obtain 
the following expression for the exponential generating series $\overline{F_{\lyn}}(y)$ for the 
forbidden trees.

\begin{lemma}\label{lemma:forbiddenlyndon}
We have 
\begin{align*}
  \overline{F_{\lyn}}(y)=\sum_{n\ge1}(-1)^{n-1}h_{n-1}(\xx)\frac{y^n}{n!}.
 \end{align*}
\end{lemma}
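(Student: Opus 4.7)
The plan is to enumerate $\overline{\T_S}^n$ explicitly and collect the resulting weights. By inspection of the picture for the forbidden link, it is a composition of two letters in which the ``lower'' letter is attached to the \emph{left} leaf of the ``upper'' letter, and moreover the right leaf $b$ of the lower letter satisfies $b < c$, where $c$ is the right leaf of the upper letter, while the colors are weakly increasing, $c_1 \le c_2$. The other two possible link types (the left-attachment with $c < b$, and the right-attachment) automatically satisfy the colored Lyndon condition---either because the top node is a Lyndon node, or because its left child is a leaf---so they are the allowed links. Consequently, in any tree in $\overline{\T_S}^n$ every parent-child link between letters must be a left-attachment, which means no letter has another letter at its right child. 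Hence the underlying shape of each $T \in \overline{\T_S}^n$ is the \emph{left comb} with $n$ leaves: the unique planar binary tree in which the right child of every internal node is a leaf.

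Fix this shape, label its internal nodes $x_1, \dots, x_{n-1}$ from bottom to top with colors $c_1, \dots, c_{n-1}$, and its leaves $a_1, \dots, a_n$ left-to-right, so that $a_{i+1}$ is the right child of $x_i$. The forbidden-link condition at each consecutive pair $(x_{i-1}, x_i)$ yields the label inequality $a_i < a_{i+1}$ and the color inequality $c_{i-1} \le c_i$; combined with $a_1 < a_2$ from $x_1$ being a letter, the leaf labels must satisfy $a_1 < a_2 < \cdots < a_n$. Since the labels form a bijection with $[n]$, this forces $a_i = i$, so the left comb admits exactly one labeling. The colors are then an arbitrary weakly increasing sequence in $\PP$, and the total weight of such a tree is $(-1)^{n-1}\prod_{i=1}^{n-1} x_{c_i}$. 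Summing,
\begin{align*}
\overline{f}_n(\xx) = (-1)^{n-1} \sum_{1 \le c_1 \le c_2 \le \cdots \le c_{n-1}} x_{c_1} x_{c_2}\cdots x_{c_{n-1}} = (-1)^{n-1} h_{n-1}(\xx),
\end{align*}
where the $n=1$ case reduces to $\overline{f}_1 = 1 = h_0$ from the single-vertex tree $\bullet_1$. Dividing by $n!$ and summing over $n \ge 1$ gives the stated identity.

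The main obstacle is the careful case analysis of which of the three link shapes are forbidden, together with the bookkeeping of label and color constraints this produces. In particular, the nontrivial point is recognizing that the chain of inequalities $a_i < a_{i+1}$ forced by the forbidden-link condition collapses the a priori $(n-1)!$ possible labelings of the left comb to the unique identity labeling, which is precisely what yields the clean $h_{n-1}$ rather than a multiple.
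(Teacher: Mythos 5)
Your proof is correct and takes essentially the same route as the paper: the paper likewise observes that the trees built exclusively from forbidden links are precisely the left combs with the identity leaf-labeling and colors weakly increasing toward the root, so each is determined by a multiset of $n-1$ colors, yielding $(-1)^{n-1}h_{n-1}(\xx)$. Your write-up only makes explicit the case analysis of the three link shapes and the chain of inequalities forcing the unique labeling, details the paper leaves to the reader via its picture of the generic forbidden tree.
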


\subsubsection{Lyndon type of a normalized tree}\label{section:lyndontype}

With a normalized tree $\Upsilon=(T,\sigma)\in \nor_n$ we can associate a set partition 
$\pi^{\lyn}(\Upsilon)$ of the set of 
internal nodes of $\Upsilon$, defined to be the finest set partition satisfying the condition:
\begin{itemize}
 \item for every internal node $x$ that is not Lyndon, $x$ and $L(x)$ belong to the same block 
of $\pi^{\lyn}(\Upsilon)$.
\end{itemize}
For the tree in Figure \ref{fig:coloredlyndontype}, the shaded rectangles indicate the 
blocks of $\pi^{\lyn}(\Upsilon)$. 

Note that the coloring condition (\ref{equation:lyndoncondition}) implies that in 
a 
colored Lyndon tree $\Upsilon$ there are no repeated colors in each block $B$ 
of the partition 
$\pi^{\lyn}(\Upsilon)$ 
associated with $\Upsilon$. Hence after choosing a 
set of $|B|$ colors for the internal nodes in $B$ there is 
a unique way 
to assign the different colors such
that the colored tree $\Upsilon$ is a colored Lyndon tree (the colors must decrease towards 
the 
root in each block of $\pi^{\lyn}(\Upsilon)$). 

Define the \emph{Lyndon type}  $\lyndonlambda(\Upsilon)$ of a normalized tree 
(colored or 
uncolored) $\Upsilon$ to be the (integer) partition whose parts are the block sizes of 
$\pi^{\lyn}(\Upsilon)$. For the tree $\Upsilon$ in Figure 
\ref{fig:coloredlyndontype}, we have $\lyndonlambda(\Upsilon)=(3,2,2,1)$. 

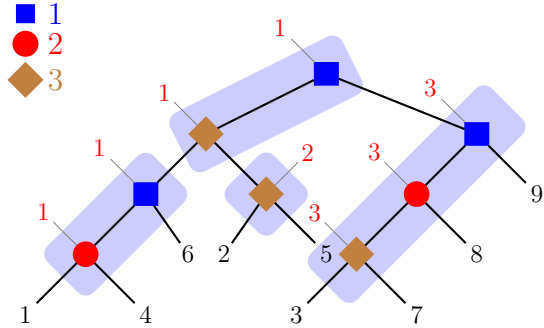
\begin{figure}[ht]
        \centering
        \usetikzlibrary{shapes,snakes}

\begin{tikzpicture}[thick,scale=0.8]

\begin{scope}[xshift=0cm,yshift=1cm]
 \draw [color=blue] (1.5,4)  node (blue){ $1$};
\draw [circle,color=red] (1.5,3.5)  node (red){ $2$};
 \draw [color=brown] (1.5,2.9)  node (brown){$3$};
\tikzstyle{every node}=[fill, draw,inner sep=4pt, minimum width=1pt,scale=0.8]
    
    \draw [color=blue] (1,4)  node (b){};
\draw [circle,color=red] (1,3.5)  node (r){};
    \draw [diamond,color=brown] (1,2.9)  node (g){};
\end{scope}

\node[fill=blue!20,blue!20,draw,rectangle,rounded corners,rotate=45, minimum width=70pt,minimum 
height=30pt,scale=0.8] at  (2.5,1.5) {};
\node[fill=blue!20,blue!20,draw,rectangle,rounded corners,rotate=26.565, minimum width=90pt,minimum 
height=30pt,scale=0.8] at  (5,3.5) {};
\node[fill=blue!20,blue!20,draw,rectangle,rounded corners,rotate=45, minimum width=120pt,minimum 
height=30pt,scale=0.8] at  (7.5,2) {};
\node[fill=blue!20,blue!20,draw,rectangle,rounded corners,rotate=45, minimum width=30pt,minimum 
height=30pt,scale=0.8] at  (5,2) {};
\tikzstyle{every node}=[fill,draw,inner sep=1pt, minimum width=10pt,scale=0.8]

    \draw [color=blue] (6,4)  node (i1)[pin=above left:\color{red}1]{N};
    \draw [color=blue] (8.5,3)  node (i2)[pin=above left:\color{red}3]{N};
    \draw [circle,color=red] (7.5,2)  node (i3)[pin=above left:\color{red}3]{n};
    \draw [diamond, color=brown] (6.5,1)  node (i4)[pin=above left:\color{red}3]{n};
    \draw [diamond,color=brown] (4,3)  node (i5)[pin=above left:\color{red}1]{n};
    \draw [diamond,color=brown] (5,2)  node (i6)[pin=above right:\color{red}2]{n};
    \draw [color=blue] (3,2)  node (i7)[pin=above left:\color{red}1]{N};
    \draw [circle,color=red] (2,1)  node (i8)[pin=above left:\color{red}1]{n};
\tikzstyle{every node}=[inner sep=1pt, minimum width=14pt,scale=0.8]

    \draw (4.3,1)  node (l1){2};
    \draw (5.5,0)  node (l2){3};
    \draw (1,0)  node (l3){1};
    \draw (3,0)  node (l4){4};
    \draw (6,1)  node (l5){5};
    \draw (3.7,1)  node (l6){6};
    \draw (7.5,0)  node (l7){7};
    \draw (9.5,2)  node (l8){9};
    \draw (8.5,1)  node (l9){8};

    \draw (i1) --  (i2) ;
    \draw (i1) --  (i5) ;
    \draw (i2) --  (i3) ;
    \draw (i2) --  (l8) ;
    \draw (i3) --  (i4) ;
    \draw (i3) --  (l9) ;
    \draw (i4) --  (l7) ;
    \draw (i4) --  (l2) ;
    \draw (i5) --  (i6) ;
    
    \draw (i5) --  (i7) ;
    \draw (i6) --  (l5) ;
    \draw (i6) --  (l1) ;
    \draw (i7) --  (l6) ;
    \draw (i7) --  (i8) ;
    \draw (i8) --  (l3) ;
    \draw (i8) --  (l4) ;
\end{tikzpicture}
 \caption{Example of a colored Lyndon tree of type (3,2,2,1). The numbers above the  lines 
correspond to the valencies of the internal nodes.}
\label{fig:coloredlyndontype}
  \end{figure}

\begin{proposition}\label{proposition:generatinglyndontype}We have
 \begin{align*}
  F_{\lyn}(y)= \sum_{n\ge 1} \sum_{\Upsilon \in \nor_n} 
e_{\lyndonlambda(\Upsilon)}(\xx)\dfrac{y^n}{n!}.
 \end{align*}
 \end{proposition}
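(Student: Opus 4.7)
The plan is to unfold the definition of $F_{\lyn}(y)$ and then regroup the sum over colored Lyndon trees according to their underlying (uncolored) normalized tree. By definition,
\[
F_{\lyn}(y) = \sum_{n\ge 1} \left( \sum_{T \in \lyn_n} \xx^{m(T)} \right) \frac{y^n}{n!},
\]
so the claim reduces to showing, for each $n \ge 1$ and each $\Upsilon \in \nor_n$, the identity
\[
\sum_{c} \xx^{m(\Upsilon, c)} = e_{\lyndonlambda(\Upsilon)}(\xx),
\]
where the sum ranges over those colorings $c$ of the internal nodes of $\Upsilon$ that turn $(\Upsilon, c)$ into a colored Lyndon tree.

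The key structural step is to describe the blocks of $\pi^{\lyn}(\Upsilon)$ explicitly. Since the only identifications forced by the defining condition are of the form $x \sim L(x)$ whenever $x$ is a non-Lyndon internal node with $L(x)$ also internal, and each internal node has at most one parent, each block $B \in \pi^{\lyn}(\Upsilon)$ is a chain $x_1, x_2, \ldots, x_k$ of internal nodes with $x_{i+1} = L(x_i)$ and with $x_1, \ldots, x_{k-1}$ all non-Lyndon. Along such a chain, condition (\ref{equation:lyndoncondition}) imposes exactly the strict inequalities $\clr(x_1) < \clr(x_2) < \cdots < \clr(x_k)$. No color constraint crosses to a different block, since a Lyndon node places no restriction on its left child and (\ref{equation:lyndoncondition}) never involves right children.

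Consequently, valid colorings of a block $B$ of size $k$ are in bijection with $k$-subsets of $\PP$ (each subset admitting a unique strictly increasing arrangement along $B$), and contribute
\[
\sum_{1 \le i_1 < i_2 < \cdots < i_k} x_{i_1} x_{i_2} \cdots x_{i_k} = e_k(\xx)
\]
to the weight generating function. Because the coloring choices in distinct blocks are independent, the sum over all valid colorings of $\Upsilon$ factors as
\[
\sum_{c} \xx^{m(\Upsilon, c)} = \prod_{B \in \pi^{\lyn}(\Upsilon)} e_{|B|}(\xx) = e_{\lyndonlambda(\Upsilon)}(\xx),
\]
using that $\lyndonlambda(\Upsilon)$ is by definition the integer partition recording the block sizes. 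Summing over $\Upsilon \in \nor_n$ and reinserting into the generating function yields the proposition.

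The main obstacle is the structural analysis of the blocks of $\pi^{\lyn}(\Upsilon)$ and the precise translation of the Lyndon coloring condition into independent strict inequalities within each block, with no constraints crossing blocks. Once this block structure is in hand, the rest is just a reading of the definition of $e_k(\xx)$ as a sum over strictly increasing monomials.
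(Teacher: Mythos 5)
Your proof is correct and follows essentially the same route as the paper: group the colored Lyndon trees by their underlying normalized tree $\Upsilon$, observe that the coloring condition forces strictly decreasing colors toward the root within each block of $\pi^{\lyn}(\Upsilon)$ and imposes no constraints across blocks, so each block of size $k$ contributes $e_k(\xx)$ independently. The paper states this block-coloring fact in the paragraph preceding the proposition and cites it in the proof; you simply derive it in slightly more detail.
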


\begin{proof}
For a colored labeled binary tree $\Psi$ we define its \emph{content} $\mu(\Psi)$ to be the 
weak 
composition $\mu$ where $\mu(i)$ is the number of internal nodes of $\Psi$ that have color $i$.
Let  $\widetilde \Psi$ denote the underlying uncolored labeled binary tree of $\Psi$.
Note that the comments above, together with the fact that
\begin{align*}
 e_{n}(\xx):=\sum_{1\le i_1<i_2<\cdots<i_n}x_{i_1}x_{i_2}\cdots x_{i_n},
\end{align*}
imply that for $\Upsilon \in \nor_n$, the 
generating 
function of
colored Lyndon trees associated with $\Upsilon$ is
\begin{align}\label{equation:contribution}
\sum_{\substack{\Psi \in \lyn_n\\ \widetilde \Psi = \Upsilon}}\xx^{\mu(\Psi)}= 
e_{\lyndonlambda(\Upsilon)}(\xx).
\end{align}
Indeed, the internal 
nodes in a block of size $i$ in the partition  $\pi^{\lyn}(\Upsilon)$ can be colored 
uniquely with any set of $i$ different colors and so the contribution from this block of   
$\pi^{\lyn}(\Upsilon)$  to 
the generating function in (\ref{equation:contribution}) is $e_i(\xx)$. Then

\begin{align*}
\sum_{\Psi \in \lyn_{n}}\xx^{\mu(\Psi)} &=\sum_{\Upsilon \in \nor_{n}}\sum_{\substack{\Psi 
\in \lyn_n\\ \widetilde \Psi = 
\Upsilon}}\xx^{\mu(\Psi)}\\
         &=\sum_{\Upsilon \in \nor_{n}}e_{\lyndonlambda(\Upsilon)}(\xx),
\end{align*}
with the last equality following from (\ref{equation:contribution}).
\end{proof}

We obtain the following theorem as a corollary of Theorem \ref{theorem:drake}.
\begin{theorem}[{\cite[Theorems 1.5 
and 4.3]{Dleon2013}}]\label{theorem:exponentialcompositionallyndontrees}
 We have
 \begin{align}
  \left(\sum_{n\ge1}\sum_{\Upsilon \in 
\nor_n}e_{\lyndonlambda(\Upsilon)}(\xx)\dfrac{y^n}{n!}\right )
^{\left\langle -1 \right\rangle}= \sum_{n\ge1}(-1)^{n-1}h_{n-1}(\xx)\dfrac{y^n}{n!}.
 \end{align}
\end{theorem}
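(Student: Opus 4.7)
The plan is to realize this identity as a direct application of Drake's Theorem \ref{theorem:drake} to the alphabet of two-leaf colored planar corollas described immediately before Lemma \ref{lemma:forbiddenlyndon}, since all the preparatory lemmas have already been lined up. First I would take $\A_{\PP}$ to be the alphabet of colored letters consisting of a single internal node colored by some $c\in\PP$ with a left leaf $a$ and a right leaf $b$ satisfying $a<b$, together with the valency rule $v(x)=v(L(x))$. This valency is precisely the smallest leaf label of the subtree rooted at $x$, so compositions of letters from $\A_{\PP}$ produce colored normalized binary trees. I would then designate as \emph{forbidden} exactly the links in which the colors weakly increase from the child to its parent (the second tree pictured just before Lemma \ref{lemma:forbiddenlyndon}); the remaining links are declared \emph{allowed}.

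With this partition of links, the trees built exclusively from allowed links are (by the negation of the forbidden condition) precisely the colored Lyndon trees satisfying condition (\ref{equation:lyndoncondition}), so $\T_{\PP}=\bigcup_n \lyn_n$. Dually, the trees built exclusively from forbidden links are the ``right caterpillars'' drawn just before Lemma \ref{lemma:forbiddenlyndon}, whose leaves are forced to be labeled $1,2,\dots,n$ from left to right and whose $n-1$ internal node colors form a weakly increasing sequence. Drake's Theorem \ref{theorem:drake} therefore applies and yields
\begin{align*}
F_{\lyn}(y)^{\langle -1\rangle}=\overline{F_{\lyn}}(y).
\end{align*}

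Next I would invoke the two identifications already done in the excerpt. Lemma \ref{lemma:forbiddenlyndon} evaluates the right-hand side as
\begin{align*}
\overline{F_{\lyn}}(y)=\sum_{n\ge1}(-1)^{n-1}h_{n-1}(\xx)\frac{y^n}{n!},
\end{align*}
and Proposition \ref{proposition:generatinglyndontype} evaluates the left-hand side as
\begin{align*}
F_{\lyn}(y)=\sum_{n\ge1}\Bigl(\sum_{\Upsilon\in\nor_n}e_{\lyndonlambda(\Upsilon)}(\xx)\Bigr)\frac{y^n}{n!}.
\end{align*}
Taking compositional inverse of both sides of the first display and substituting the second and third displays gives exactly the claimed identity.

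Since the alphabet, the valency, and the partition of links into allowed/forbidden were engineered precisely so that Lemma \ref{lemma:forbiddenlyndon} and Proposition \ref{proposition:generatinglyndontype} describe the two generating functions appearing in Drake's Theorem, there is really no computational obstacle. The only delicate point I would flag is verifying that $\A_{\PP}$ is genuinely an alphabet in the sense of Subsection \ref{subsection:drakesinterpretation} -- that it is closed under the label-substitution equivalence and enjoys the unique decomposition property -- so that Theorem \ref{theorem:drake} may be applied without amendment; this is immediate from the fact that each letter is the corolla of a single internal node whose two children are both leaves, so no letter can be a nontrivial composition of other letters.
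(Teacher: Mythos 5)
Your proposal is correct and follows exactly the paper's route: the paper obtains this theorem as an immediate corollary of Drake's Theorem \ref{theorem:drake} applied to the alphabet of colored two-leaf letters with the weakly-increasing-color links forbidden, combined with Lemma \ref{lemma:forbiddenlyndon} and Proposition \ref{proposition:generatinglyndontype}. Your identification of the allowed trees with colored Lyndon trees and the inversion step are exactly as in the text.
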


To define the colored combs now we consider forbidden links of the form:
\begin{center}
  \begin{tikzpicture}[scale=0.7,pin distance=0.2cm]
\begin{scope}[xshift=0cm,yshift=0cm]

\tikzstyle{every node}=[draw,inner sep=0.5,scale=0.8]
    \draw [circle] (3,1)  node (i1){\small $c_1$};
    \draw [circle] (2,2)  node (i2){\small $c_2$};

\tikzstyle{every node}=[inner sep=1pt, minimum width=14pt,scale=0.8]

    \draw (2,0)  node (m){$b$};
    \draw (4,0)  node (l1){$c$};
    \draw (1,1)  node (l2){$a$};

    \draw (m) --  (i1) ;
    \draw (i1) --  (l1) ;
    \draw (i1) --  (i2) ;
    \draw (i2) --  (l2) ;
\end{scope}

\end{tikzpicture}
\end{center}
with $a<b<c$ and $c_1 \ge c_2$, i.e., the colors weakly increase towards the root. Then the
allowed trees are colored combs and the forbidden trees look like
\begin{center}
  \begin{tikzpicture}[scale=0.7]

\tikzstyle{every node}=[draw,inner sep=0,minimum width=22,scale=0.8]

    \draw [circle,color=black] (-1,1)  node (i1){\small$c_1$};
    \draw [circle,color=black] (-2,2)  node (i2){\small $c_2$};
    \draw [circle,color=black] (-4,4)  node (i4){\small $c_{n-1}$};
    \draw [circle,color=black] (-3,3)  node (i3){\small $c_{n-2}$};

\tikzstyle{every node}=[inner sep=0pt, minimum width=14pt,scale=0.8]

    \draw (0,0)  node (m){$n$};
    \draw (-2,0)  node (l1){$n-1$};
    \draw (-3,1)  node (l2){$n-2$};
    \draw (-4,2)  node (l3){$2$};
    \draw (-5,3)  node (l4){$1$};

    \draw (m) --  (i1) ;
    \draw (i1) --  (l1) ;
    \draw (i2) --  (l2) ;
    \draw (i3) --  (l3) ;
    \draw (i4) --  (l4) ;
    \draw (i1) --  (i2) ;
    \draw [dashed, thick] (i2) --  (i3) ;
    \draw [dotted, thick] (-2.6,1.6) --  (-3.3,2.3) ;

    \draw (i3) --  (i4) ;

\end{tikzpicture}
\end{center}
with $c_1\ge c_2 \ge \cdots \ge c_{n-1}$. Then following the same argument as the one before  
Lemma \ref{lemma:forbiddenlyndon} we obtain 
the following expression for the exponential generating series $\overline{F_{\comb}}(y)$ of the 
forbidden trees.

\begin{lemma}\label{lemma:forbiddencomb}
We have
\begin{align*}
  \overline{F_{\comb}}(y)=\sum_{n\ge1}(-1)^{n-1}h_{n-1}(\xx)\frac{y^n}{n!}.
 \end{align*}
\end{lemma}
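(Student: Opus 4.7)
The plan is to mimic, almost verbatim, the argument that was used in the paragraph immediately preceding Lemma \ref{lemma:forbiddenlyndon}, adapting it to the comb setting. The key observation is that the forbidden links selected for $\comb$-trees again produce a very rigid family of trees whose only degree of freedom is the multiset of colors on the internal nodes, arranged in a prescribed monotone order; this multiset contribution is exactly what the definition of $h_{n-1}(\xx)$ encodes.

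First, I would describe the structure of a tree in $\overline{\T_S}^n$ built from the comb-forbidden links. Because every allowed local shape must be suppressed, the only internal nodes whose right child is itself internal are those that appear along the right-spine. Combined with the normalization convention (left children carry the smallest labels), this forces a unique underlying planar shape on the label set $[n]$: the ``right-comb'' where the root has left leaf $1$ and right child internal, that internal node has left leaf $2$ and right child internal, and so on, with the bottommost internal node carrying leaves $n-1$ and $n$. Thus for each $n\ge 1$ there is exactly one underlying shape, and it has $n-1$ internal nodes, i.e.\ $|T|=n-1$ so the sign is $(-1)^{n-1}$.

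Next I would handle the colorings. Labelling the internal nodes $c_1,c_2,\dots,c_{n-1}$ from the bottom of the spine to the root, each forbidden link forces the upper node's color to be $\le$ the color of its right (internal) child, so the complete forbidden-link constraint reads $c_1\ge c_2\ge\cdots\ge c_{n-1}$ with each $c_i\in\PP$. The contribution of such a tree to the generating function is $\xx^{m(T)}=x_{c_1}x_{c_2}\cdots x_{c_{n-1}}$, so
\begin{align*}
\overline{f}_n(\xx)=(-1)^{n-1}\sum_{c_1\ge c_2\ge\cdots\ge c_{n-1}\ge 1}x_{c_1}x_{c_2}\cdots x_{c_{n-1}}=(-1)^{n-1}h_{n-1}(\xx),
\end{align*}
where the last equality comes from reindexing $i_k=c_{n-k}$ and comparing with the definition of $h_{n-1}(\xx)$ from Section~\ref{section:introduction}.

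Summing $\overline{f}_n(\xx)\,y^n/n!$ over $n\ge 1$ yields the claimed identity. The step that needs the most care is verifying that the right-spine combined with the normalization convention really forces the tree into a unique shape and that no other forbidden-link-only tree exists; once that rigidity is established, the color count is essentially a repackaging of the generating series for weakly increasing sequences, and no further computation is required.
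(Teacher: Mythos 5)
Your proposal is correct and is essentially the paper's own argument: the paper proves Lemma \ref{lemma:forbiddencomb} by remarking that "the same argument as the one before Lemma \ref{lemma:forbiddenlyndon}" applies, namely that the forbidden-link-only trees have a single rigid underlying shape (the right-comb with left leaves $1,2,\dots,n-1$ forced by normalization and the fact that left-child links are never forbidden), carry $n-1$ internal nodes giving the sign $(-1)^{n-1}$, and are determined by a weakly monotone color sequence, whose generating function is $h_{n-1}(\xx)$. You have simply written out that argument explicitly, including the reindexing of the weakly decreasing color chain as a multiset, which is exactly the intended proof.
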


\subsubsection{Comb type of a normalized tree}\label{section:combtype}
We can associate a new type to  each $\Upsilon \in \nor_n$ in the following 
way: Let $\pi^{\comb}(\Upsilon)$ be the finest set partition of the set of internal nodes of 
$\Upsilon$ satisfying
\begin{itemize}
\item for every pair of internal nodes $x$ and $y$ such that $y$ is a right child of $x$, 
$x$ 
and $y$ belong to the same block of $\pi^{\comb}(\Upsilon)$.
\end{itemize}
We define the \emph{comb type} $\comblambda(\Upsilon)$ of $\Upsilon$ to be the (integer) 
partition whose parts are the sizes of the blocks of $\pi^{\comb}(\Upsilon)$.

Note that the coloring condition (\ref{equation:combcondition}) is closely related to the 
comb type of a normalized tree. The coloring 
condition 
implies that in a colored comb $\Upsilon$ there are no repeated colors in each block $B$ of 
the partition $\pi^{\comb}(\Upsilon)$ associated to $\Upsilon$.
So after choosing 
$|B|$ 
different colors for the 
internal nodes of $\Upsilon$ in $B$,  there is a unique way to assign the colors such 
that $\Upsilon$ is a colored comb (the colors must decrease towards the 
right in each block of $\pi^{\comb}(\Upsilon)$). 
In Figure \ref{fig:combtype} this relation is illustrated.

In the same manner as for Proposition \ref{proposition:generatinglyndontype} and Theorem 
\ref{theorem:exponentialcompositionallyndontrees} we derive the corresponding results for 
colored combs.

\begin{proposition}\label{proposition:generatingcombtype}We have
 \begin{align*}
  F_{\comb}(y)= \sum_{n\ge 1} \sum_{\Upsilon \in \nor_n} 
e_{\comblambda(\Upsilon)}(\xx)\dfrac{y^n}{n!}.
 \end{align*}
 \end{proposition}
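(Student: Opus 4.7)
The plan is to mimic exactly the proof of Proposition \ref{proposition:generatinglyndontype}, replacing Lyndon nodes/blocks by the comb blocks defined via the relation ``$y$ is a right child of $x$''. By Drake's setup applied to the alphabet of binary letters with the forbidden-link rule chosen in Section \ref{subsection:colorednormalizedtrees} for combs, we have $F_{\comb}(y) = \sum_{n \ge 1} \left(\sum_{\Psi \in \comb_n} \xx^{\mu(\Psi)}\right) \dfrac{y^n}{n!}$, so the task reduces to a fiberwise count: for each fixed uncolored normalized tree $\Upsilon \in \nor_n$, evaluate
\[
\sum_{\substack{\Psi \in \comb_n \\ \widetilde{\Psi} = \Upsilon}} \xx^{\mu(\Psi)} = e_{\comblambda(\Upsilon)}(\xx).
\]

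To establish this fiber identity, I would first observe that the comb coloring condition (\ref{equation:combcondition}) is a local constraint along ``right-child edges'': it only forces $\clr(x) > \clr(R(x))$ when $x$ has a non-leaf right child. By the very definition of $\pi^{\comb}(\Upsilon)$ as the finest partition of the internal nodes of $\Upsilon$ in which every internal node is in the same block as its internal right child, each block $B$ of $\pi^{\comb}(\Upsilon)$ is precisely a maximal chain of internal nodes linked by right-child edges, and the coloring constraints inside $\Upsilon$ decompose as independent constraints, one per block. Inside a single block $B$, condition (\ref{equation:combcondition}) requires the colors to be strictly decreasing as we move from the top of the chain to the bottom; therefore any choice of $|B|$ pairwise distinct colors from $\PP$ determines a unique admissible coloring of the nodes in $B$. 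This choice contributes exactly $e_{|B|}(\xx)$ to the generating function, and multiplying across the blocks gives $\prod_{B \in \pi^{\comb}(\Upsilon)} e_{|B|}(\xx) = e_{\comblambda(\Upsilon)}(\xx)$.

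Summing the fiberwise identity over $\Upsilon \in \nor_n$ yields
\[
\sum_{\Psi \in \comb_n} \xx^{\mu(\Psi)} = \sum_{\Upsilon \in \nor_n} e_{\comblambda(\Upsilon)}(\xx),
\]
and plugging this into the expression for $F_{\comb}(y)$ gives the desired formula. The only nontrivial step is justifying that the coloring constraints really do decouple across the blocks of $\pi^{\comb}(\Upsilon)$ and that no constraint is imposed across blocks; this is immediate from the fact that an internal node whose right child is a leaf imposes no color inequality at all, and such nodes are precisely the ``bottoms'' of the right chains, i.e., the ends of the blocks. Thus no routine computation remains, and the proof is a direct parallel to the Lyndon-type case.
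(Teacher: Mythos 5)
Your proposal is correct and follows exactly the route the paper takes: the paper proves Proposition \ref{proposition:generatinglyndontype} by the fiberwise block-coloring argument and then states that Proposition \ref{proposition:generatingcombtype} is derived ``in the same manner,'' which is precisely the adaptation you carry out (blocks of $\pi^{\comb}(\Upsilon)$ are maximal right-child chains, condition (\ref{equation:combcondition}) forces strictly decreasing colors along each chain, so each block of size $k$ contributes $e_k(\xx)$ independently). Your explicit justification that the constraints decouple across blocks is a detail the paper leaves implicit, but it is the same argument.
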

\begin{theorem}\label{theorem:exponentialcompositionalcombs}We have
 \begin{align*}
   \left(\sum_{n\ge1}\sum_{\Upsilon \in 
\nor_n}e_{\comblambda(\Upsilon)}(\xx)\dfrac{y^n}{n!}\right )
^{\left\langle -1 \right\rangle}=\sum_{n\ge1}(-1)^{n-1}h_{n-1}(\xx)\dfrac{y^n}{n!}.
 \end{align*}
 \end{theorem}
 
\begin{figure}[ht]
        \centering
        \usetikzlibrary{shapes,snakes}

\begin{tikzpicture}[thick,scale=0.8]

\begin{scope}[xshift=0cm,yshift=1cm]
 \draw [color=blue] (1.5,4)  node (blue){$1$};
\draw [circle,color=red] (1.5,3.5)  node (red){$2$};
 \draw [color=brown] (1.5,2.9)  node (brown){$3$};
\tikzstyle{every node}=[fill, draw,inner sep=4pt, minimum width=1pt,scale=0.8]
    
   \draw [color=blue] (1,4)  node (b){};
\draw [circle,color=red] (1,3.5)  node (r){};
    \draw [diamond,color=brown] (1,2.9)  node (g){};
\end{scope}

\node[fill=blue!20,blue!20,draw,rectangle,rounded corners,rotate=-45, minimum width=70pt,minimum 
height=30pt,scale=0.8] at  (4.5,2.4) {};
\node[fill=blue!20,blue!20,draw,rectangle,rounded corners,rotate=-22, minimum width=100pt,minimum 
height=30pt,scale=0.8] at  (7.25,3.5) {};
height=30pt,scale=0.8] at  (7.5,2) {};
\node[fill=blue!20,blue!20,draw,rectangle,rounded corners,rotate=45, minimum width=30pt,minimum 
height=30pt,scale=0.8] at  (2,1) {};
\node[fill=blue!20,blue!20,draw,rectangle,rounded corners,rotate=45, minimum width=30pt,minimum 
height=30pt,scale=0.8] at  (3,2) {};
\node[fill=blue!20,blue!20,draw,rectangle,rounded corners,rotate=45, minimum width=30pt,minimum 
height=30pt,scale=0.8] at  (6.5,1) {};
\node[fill=blue!20,blue!20,draw,rectangle,rounded corners,rotate=45, minimum width=30pt,minimum 
height=30pt,scale=0.8] at  (7.5,2) {};
\tikzstyle{every node}=[fill,draw,inner sep=0pt, minimum width=1 pt, scale=0.8]

    \draw [circle,color=red] (6,4)  node (i1){n};
    \draw [color=blue]  (8.5,3)  node (i2){N};
    \draw [circle,color=red]  (7.5,2)  node (i3){n};
    \draw [diamond,color=brown]  (6.5,1)  node (i4){n};
    \draw[diamond, color=brown]  (4,3)  node (i5){n};
    \draw [circle,color=red]  (5,2)  node (i6){n};
    \draw [diamond,color=brown] (3,2)  node (i7){n};
    \draw[color=blue]  (2,1)  node (i8){N};
\tikzstyle{every node}=[inner sep=1pt, minimum width=14pt,scale=0.8]

    \draw (4.3,1)  node (l1){2};
    \draw (5.5,0)  node (l2){3};
    \draw (1,0)  node (l3){1};
    \draw (3,0)  node (l4){4};
    \draw (6,1)  node (l5){5};
    \draw (3.7,1)  node (l6){6};
    \draw (7.5,0)  node (l7){7};
    \draw (9.5,2)  node (l8){9};
    \draw (8.5,1)  node (l9){8};

    \draw (i1) --  (i2) ;
    \draw (i1) --  (i5) ;
    \draw (i2) --  (i3) ;
    \draw (i2) --  (l8) ;
    \draw (i3) --  (i4) ;
    \draw (i3) --  (l9) ;
    \draw (i4) --  (l7) ;
    \draw (i4) --  (l2) ;
    \draw (i5) --  (i6) ;
    
    \draw (i5) --  (i7) ;
    \draw (i6) --  (l5) ;
    \draw (i6) --  (l1) ;
    \draw (i7) --  (l6) ;
    \draw (i7) --  (i8) ;
    \draw (i8) --  (l3) ;
    \draw (i8) --  (l4) ;
\end{tikzpicture}
 \caption{Example of a colored comb of comb type $(2,2,1,1,1,1)$.}
\label{fig:combtype}
  \end{figure}

  \begin{remark}
In \cite{Dleon2013} Theorem \ref{theorem:exponentialcompositionallyndontrees}
is proved using a different technique. The proof 
involves the recursive definition of the M\"obius invariant and an EL-labeling of a poset of 
partitions weighted by weak compositions where the ascent-free (or falling) chains coming from the  
EL-labeling are naturally described by colored Lyndon trees. The proof of Theorem 
\ref{theorem:exponentialcompositionalcombs} in \cite{Dleon2013} is a corollary of Theorems
\ref{theorem:exponentialcompositionallyndontrees} and \ref{theorem:equicardinality}.
\end{remark}

Theorems \ref{theorem:exponentialcompositionallyndontrees} and 
\ref{theorem:exponentialcompositionalcombs} together provide a new proof of the following theorem 
that was proved bijectively in 
\cite{Dleon2013}.

\begin{theorem}{{\cite[Theorem 5.4]{Dleon2013}}}\label{theorem:equicardinality}
For every $\mu \in \wcomp$,
\begin{align*}
 |\lyn_{\mu}|= |\comb_{\mu}|.
\end{align*}
\end{theorem}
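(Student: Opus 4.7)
The plan is to read the conclusion off directly from the generating function machinery already assembled in this section. Lemmas \ref{lemma:forbiddenlyndon} and \ref{lemma:forbiddencomb} identify the exponential generating functions for the forbidden trees in the two setups:
\[
\overline{F_{\lyn}}(y) \;=\; \sum_{n\ge 1}(-1)^{n-1}h_{n-1}(\xx)\frac{y^n}{n!} \;=\; \overline{F_{\comb}}(y).
\]
Theorem \ref{theorem:drake} applied to each of the two alphabet/link configurations then yields
\[
F_{\lyn}^{\left\langle -1 \right\rangle}(y) \;=\; \overline{F_{\lyn}}(y) \;=\; \overline{F_{\comb}}(y) \;=\; F_{\comb}^{\left\langle -1 \right\rangle}(y).
\]
Both $F_{\lyn}(y)$ and $F_{\comb}(y)$ vanish at $y=0$ and have coefficient of $y$ equal to $1$ (the only tree on one leaf is the single node $\bullet_1$), so each admits a unique compositional inverse in $\Lambda[[y]]$; hence $F_{\lyn}(y)=F_{\comb}(y)$.

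Next I would unwind the definitions to turn this generating function identity into the desired equicardinality. By the definition of the generating functions in Section \ref{subsection:drakesinterpretation}, together with the observation that the equivalence classes of colored letters in $\A_{\PP}$ are indexed by the color $c \in \PP$, one has
\[
F_{\lyn}(y) \;=\; \sum_{n \ge 1}\Bigl(\sum_{\Upsilon \in \lyn_n}\xx^{\mu(\Upsilon)}\Bigr)\frac{y^n}{n!} \;=\; \sum_{n\ge 1}\sum_{\mu \in \wcomp_{n-1}}|\lyn_{\mu}|\,\xx^{\mu}\,\frac{y^n}{n!},
\]
and similarly for $F_{\comb}(y)$ with $\comb$ in place of $\lyn$. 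Extracting the coefficient of $\xx^{\mu}\,\frac{y^{|\mu|+1}}{(|\mu|+1)!}$ from the identity $F_{\lyn}(y)=F_{\comb}(y)$ then gives $|\lyn_{\mu}|=|\comb_{\mu}|$ for every $\mu \in \wcomp$.

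There is no substantive obstacle: the heavy lifting has already been done by Theorem \ref{theorem:drake} (which required the repaired involution argument) and by the two forbidden-link computations in Lemmas \ref{lemma:forbiddenlyndon} and \ref{lemma:forbiddencomb}. The only point demanding care is checking that coefficient extraction is legitimate here, which it is because the $\xx^{\mu}$ are linearly independent in $\Lambda_{\QQ}$ and both sums are already sorted by the degree $|\mu|+1=n$ of $y$. Note that this argument is genuinely non-bijective in nature, in contrast to the bijective proof given in \cite{Dleon2013}: it is the shared compositional inverse $\sum_{n\ge 1}(-1)^{n-1}h_{n-1}(\xx)\frac{y^n}{n!}$ that forces the two families of trees to be equinumerous in each multidegree.
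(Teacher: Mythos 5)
Your proposal is correct and is essentially the paper's own argument: the paper proves this theorem in one sentence by combining Lemmas \ref{lemma:forbiddenlyndon} and \ref{lemma:forbiddencomb} with Theorem \ref{theorem:drake}, exactly as you do, noting that this gives a non-bijective alternative to the bijective proof in \cite{Dleon2013}. Your write-up simply makes explicit the routine details (uniqueness of the compositional inverse and coefficient extraction in $\xx^{\mu}$) that the paper leaves implicit.
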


\begin{remark}
It is interesting to note that under Drake's interpretation 
Theorem \ref{theorem:equicardinality} becomes somewhat more transparent: The two
sets $\lyn_{n}$ and $\comb_{n}$ are constructed using the same alphabet avoiding two 
different sets of forbidden links that are in bijection with each other. It is also interesting 
that both types of forbidden trees have a kind of  ``shape duality", colored Lyndon trees 
have forbidden trees that look like ``left-combs" and colored combs have forbidden trees that look 
like ``right-combs".
\end{remark}

\section{Stirling permutations}\label{section:stirlingpermutations}
Now we consider permutations of the multiset $\{1,1,2,2,\cdots,n,n\}$ satisfying the condition 
that all the numbers between the two occurrences of any fixed number $m$ are larger than $m$, that 
is, multiset permutations $\theta$ satisfying the following condition:
\begin{align}
 \text{if } i<j<k \text{ and } \theta_i=\theta_k=m \text{ then } \theta_j \ge
m.\label{condition:stirling}
\end{align}

For example the 
permutation $12234431$ satisfies condition \eqref{condition:stirling} but the permutation 
$11322344$ does not ($2<3$ and $2$ is between the two occurrences of $3$). The 
permutations satisfying condition \eqref{condition:stirling} were introduced by Gessel and Stanley 
in \cite{StanleyGessel1978} and are known as \emph{Stirling permutations}. For $n\ge0$ we will 
denote the set of Stirling permutations of $\{1,1,2,2,\cdots,n,n\}$ by $\Q_n$.

For $\theta \in \Q_n$, assuming always that $\theta_0=\theta_{2n+1}=0$, consider the sets
\begin{align}
\mathrm{DES}(\theta)&=\{i\,|\,\theta_{i}>\theta_{i+1}\},\nonumber\\
\mathrm{ASC}(\theta)&=\{i\,|\,\theta_{i}<\theta_{i+1}\} \text{ and}\\
\mathrm{PLA}(\theta)&=\{i\,|\,\theta_{i}=\theta_{i+1}\}.\nonumber
\end{align}

These are respectively the sets of \emph{descents}, \emph{ascents} and
\emph{plateaux} defined in \cite{StanleyGessel1978} and \cite{Bona2009}. Let 
$\des(\theta)=|\mathrm{DES}(\theta)|$, $\asc(\theta)=|\mathrm{ASC}(\theta)|$  and 
$\pla(\theta)=|\mathrm{PLA}(\theta)|$ be
their cardinalities. It is an immediate observation that the statistics $\des$ and $\asc$ are 
equidistributed. For this it is enought to consider the function $\rho: \Q_n\rightarrow \Q_n$ that 
reverses a permutation $\rho(\theta)_i:=\theta_{2n+1-i}$. B\'ona \cite{Bona2009} 
proved that these two
statistics are also equidistributed with $\pla$ by
showing that they satisfy the same recurrence relation. Janson, Kuba and Panholzer 
\cite{JansonKubaPanholzer2011} gave a simple combinatorial proof using a bijection of Gessel 
between Stirling permutations and increasing ternary trees.

The triangle of numbers given by any one of these equidistributed statistics are known as the 
\emph{second-order
Eulerian numbers} (see \cite{graham1994concrete}). This terminology
emphasizes that the Stirling
permutations are the second case ($r=2$) of a more general family $\Q_n(r)$ of permutations of the 
multiset $\{1^r,2^r,\cdots,n^r\}$ satisfying condition (\ref{condition:stirling}). Note 
that $\Q(1)=\sym_n$ and $\Q_n(2)=\Q_n$. These 
more general multiset permutations have been also studied (see
\cite{Park1994-1,Park1994-2,Park1994-3,JansonKubaPanholzer2011,KubaPanholzer2011} and
\cite{HaglundVisontai2012}) with the name of \emph{$r$-Stirling permutations} or
\emph{$r$-multipermutations}. We borrow the terminology in \cite{graham1994concrete} and call in
general any statistic that is equidistributed with the descent statistic in $\Q_n(r)$ an
\emph{$r$th-order Eulerian statistic}.

From a permutation in $\Q_{n-1}$ we
can obtain a permutation in $\Q_n$ by inserting the consecutive labels $nn$ in $2n-1$ possible
positions. We have that $\Q_1=\{11\}$ and so $|\Q_1|=1$. By induction we obtain that 
$|\Q_n|=1\cdot 3 \cdots (2n-1)=(2n-1)!!$. 

\subsection{Type of a Stirling permutation}\label{section:stirlingtype}
In this section we define several types associated to a Stirling permutation. These types 
were introduced in \cite{Dleon2013}. 

A \emph{segment} $u$ of a Stirling permutation 
$\theta=\theta_1\theta_2\cdots\theta_{2n}$ is a 
subword  of $\theta$ of the form $u=\theta_{i}\theta_{i+1}\cdots\theta_{i+\ell}$, i.e., 
all the letters of $u$ are adjacent in $\theta$. A \emph{block} in a 
Stirling permutation $\theta$ is a segment of $\theta$ that starts and ends with the same letter. 
For example, $455774$ is a block of $12245577413366$. We define $B_{\theta}(a)$ to be the block of 
$\theta$ that starts and ends with the letter $a$, and define $\mathring{B_{\theta}}(a)$  to be the 
segment 
obtained from $B_{\theta}(a)$ after removing the two occurrences of the letter $a$. For example, 
$B_{\theta}(1)=1224557741$ in $\theta=12245577413366$ and $\mathring{B_{\theta}}(1)=22455774$. 

We call $(a,b)$ an \emph{ascending adjacent pair} of $\theta \in \Q_n$ if $a<b$ and the blocks 
$B_{\theta}(a)$ and 
$B_{\theta}(b)$ are adjacent in $\theta$, i.e., $\theta=\theta^{\prime} 
B_{\theta}(a)B_{\theta}(b)\theta^{\prime\prime}$. An 
\emph{ascending adjacent sequence} of $\theta$ of length $k$ is a 
subsequence  $a_1<a_2<\cdots<a_k$ such that $(a_j,a_{j+1})$  is an ascending adjacent pair for 
$j=1,\dots,k-1$.
Similarly, we call $(a,b)$ a
\emph{terminally nested pair} if $a<b$ and the block $B_{\theta}(b)$ is the last 
block in $\mathring{B_{\theta}}(a)$, i.e., 
$\mathring{B_{\theta}}(a)=\theta^{\prime} 
B_{\theta}(b)$ for some Stirling permutation $\theta^{\prime}$ on a subset of the letters. A 
\emph{terminally nested sequence} of 
$\theta$ of length $k$ is a 
subsequence  $a_1<a_2<\cdots<a_k$ such that $(a_j,a_{j+1})$  is a terminally nested pair for 
$j=1,\dots,k-1$.
If we apply the map $\rho$ that reverses the permutation to the two definitions above we obtain the 
notions of \emph{descending adjacent} and \emph{initially nested} pairs and sequences.

We can associate a \emph{type} to a Stirling permutation $\theta \in \Q_n$ using the different 
types of sequences defined above. We 
define the \emph{ascending adjacent type} $\aalambda(\theta)$, to be the partition whose parts are 
the 
lengths of maximal ascending adjacent sequences; the \emph{terminally nested 
type} $\tnlambda(\theta)$, to be the partition whose parts are the lengths of maximal terminally 
nested sequences; the \emph{descending adjacent type} $\dalambda(\theta)$, to be the 
partition whose parts are 
the 
lengths of maximal descending adjacent sequences; and the \emph{initially nested 
type} $\inlambda(\theta)$, to be the partition whose parts are the lengths of maximal initially 
nested sequences.

\begin{example}
If $\theta=158851244667729933$, the maximal ascending adjacent sequences
are $129$, $467$, $3$, $5$ and $8$ and  
$\aalambda(\theta)=(3,3,1,1,1)$. The maximal descending adjacent sequences
are $1$, $2$, $93$, $4$, $6$, $7$, $5$ and $8$ and  
$\dalambda(\theta)=(2,1,1,1,1,1,1,1)$. The maximal terminally nested 
sequences are $158$, $27$, $3$, $4$, $6$ and $9$ and
$\tnlambda(\theta)=(3,2,1,1,1,1)$. The maximal initially nested 
sequences are $158$, $24$, $6$, $7$, $9$ and $3$ and
$\inlambda(\theta)=(3,2,1,1,1,1)$. All of them are partitions of $n=9$.
\end{example}

Note that since every descent in $\theta$ occurs at the end of a maximal ascending adjacent 
sequence then $\ell(\aalambda(\theta))=\des(\theta)$ where $\ell(\lambda)$ indicates  the number of 
parts of a partition $\lambda$. So $\aalambda$ is a refinement of the $\des$ statistic. In the same 
manner since each plateau can be considered as occurring either at the end of a maximal terminally
nested sequence or at the end of a maximal initially nested sequence then
$\ell(\tnlambda(\theta))=\ell(\inlambda(\theta))=\pla(\theta)$. By a similar argument we have that 
$\ell(\dalambda(\theta))=\asc(\theta)$. Table \ref{table:stirlingpermutationsn3} gives the 
values of $\des$, $\asc$, $\pla$, $\aalambda$, $\dalambda$, $\tnlambda$ and $\inlambda$ for $n=3$. 
For $n=3$ it happens that $\tnlambda$ and $\inlambda$ are equal but this is not true in general.

\begin{table}[ht]
\definecolor{tcA}{rgb}{0.862745,0.862745,0.862745}
\begin{center}
\begin{tabular}{|c|c|c|c|c|c|c|c|}\hline
\textbf{$\theta$} & \textbf{$\des(\theta)$} & \textbf{$\asc(\theta)$} & \textbf{$\pla(\theta)$} &
\textbf{$\aalambda(\theta)$}&\textbf{$\dalambda(\theta)$}&\textbf{$\tnlambda(\theta)$} &
\textbf{$\inlambda(\theta)$}\\\hline
$112233$ & $1$ & $3$ & $3$ & $(3)$     & $(1,1,1)$ & $(1,1,1)$ & $(1,1,1)$ \\\hline
$113322$ & $2$ & $2$ & $3$ & $(2,1)$   & $(2,1)$   & $(1,1,1)$ & $(1,1,1)$ \\\hline
$221133$ & $2$ & $2$ & $3$ & $(2,1)$   & $(2,1)$   & $(1,1,1)$ & $(1,1,1)$ \\\hline
$223311$ & $2$ & $2$ & $3$ & $(2,1)$   & $(2,1)$   & $(1,1,1)$ & $(1,1,1)$ \\\hline
$331122$ & $2$ & $2$ & $3$ & $(2,1)$   & $(2,1)$   & $(1,1,1)$ & $(1,1,1)$ \\\hline
$122331$ & $2$ & $3$ & $2$ & $(2,1)$   & $(1,1,1)$ & $(2,1)$   & $(2,1)$ \\\hline
$112332$ & $2$ & $3$ & $2$ & $(2,1)$   & $(1,1,1)$ & $(2,1)$   & $(2,1)$ \\\hline
$133122$ & $2$ & $3$ & $2$ & $(2,1)$   & $(1,1,1)$ & $(2,1)$   & $(2,1)$ \\\hline
$122133$ & $2$ & $3$ & $2$ & $(2,1)$   & $(1,1,1)$ & $(2,1)$   & $(2,1)$ \\\hline
$133221$ & $3$ & $2$ & $2$ & $(1,1,1)$ & $(2,1)$   & $(2,1)$   & $(2,1)$ \\\hline
$221331$ & $3$ & $2$ & $2$ & $(1,1,1)$ & $(2,1)$   & $(2,1)$   & $(2,1)$ \\\hline
$233211$ & $3$ & $2$ & $2$ & $(1,1,1)$ & $(2,1)$   & $(2,1)$   & $(2,1)$ \\\hline
$331221$ & $3$ & $2$ & $2$ & $(1,1,1)$ & $(2,1)$   & $(2,1)$   & $(2,1)$ \\\hline
$123321$ & $3$ & $3$ & $1$ & $(1,1,1)$ & $(1,1,1)$ & $(3)$     & $(3)$ \\\hline
$332211$ & $3$ & $1$ & $3$ & $(1,1,1)$ & $(3)$     & $(1,1,1)$ & $(1,1,1)$ \\\hline
\end{tabular}
\end{center}
\caption{Stirling permutations and statistics for $n=3$.}
\label{table:stirlingpermutationsn3}
\end{table}

The following proposition was proved in \cite{Dleon2013}.

\begin{proposition}[{\cite[Proposition 4.6]{Dleon2013}}]\label{proposition:propxi}There is a 
bijection $\xi:\Q_n \rightarrow \Q_n$ that satisfies:	
 \begin{enumerate}
  \item $(i,j)$ is an ascending adjacent pair in $\theta$ if and only if $(i,j)$ is a terminally 
nested pair in $\xi(\theta)$,
\item $\tnlambda(\xi(\theta))=\aalambda(\theta)$.
 \end{enumerate}
\end{proposition}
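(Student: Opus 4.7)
My approach is to realize each Stirling permutation $\theta\in\Q_n$ as a labeled plane tree $T(\theta)$ on $n+1$ vertices (one unlabeled root plus labels $1,\dots,n$) with the heap property: the children of a labeled node $a$, read left to right, are the opening letters of the top-level blocks of $\mathring{B_\theta}(a)$, and the children of the root are the opening letters of the top-level blocks of $\theta$. Under this classical bijection, an ascending adjacent pair $(a,b)$ of $\theta$ corresponds to $a$ and $b$ being consecutive siblings in $T(\theta)$ with $a$ immediately left of $b$ and $a<b$, while a terminally nested pair $(a,b)$ corresponds to $b$ being the rightmost child of $a$ in $T(\theta)$ (the condition $a<b$ being automatic). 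The proposition is therefore equivalent to constructing a bijection on labeled heap-ordered plane trees that transforms the set of ascending consecutive-sibling pairs of $T(\theta)$ into the set of rightmost-child pairs of $T(\xi(\theta))$.

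I would define $\xi$ by a single bottom-up pass over $T(\theta)$. Having already transformed the subtrees at all descendants of an internal node $v$ with current children $c_1,\dots,c_m$, I decompose this list into its maximal ascending runs $R_i=(a_1^{(i)}<a_2^{(i)}<\cdots<a_{\ell_i}^{(i)})$ and perform two operations for each run: \emph{linearize}, by appending $a_{j+1}^{(i)}$ as the new rightmost child of $a_j^{(i)}$ for $1\le j<\ell_i$; and \emph{relocate}, by removing all (post-recursion) children of the end-of-run vertex $a_{\ell_i}^{(i)}$ and inserting them, in order, into $v$'s children list immediately after $a_1^{(i)}$, leaving $a_{\ell_i}^{(i)}$ a leaf. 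The new children list of $v$ is then, run by run, $a_1^{(i)}$ followed by the relocated children from $a_{\ell_i}^{(i)}$. The heap property is preserved because the relocated labels exceed $a_{\ell_i}^{(i)}>v$. The pair correspondence is then verified by induction on $n$: the linearization at $v$ contributes exactly the rightmost-child pairs $(a_j^{(i)},a_{j+1}^{(i)})$, matching precisely the ascending adjacent pairs at $v$; while the subtrees of the first-of-run and relocated children carry, by induction, rightmost-child pairs that correspond to ascending adjacent pairs strictly inside those subtrees of $\theta$. The crucial point is that every end-of-run vertex becomes a leaf, so it contributes no spurious rightmost-child pair; and every labeled internal vertex of $T(\xi(\theta))$ necessarily sits in a run of length $\ge 2$ at its parent, forcing its rightmost child to be the next run-element, which is an ascending adjacent pair of $\theta$.

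Bijectivity I would establish by constructing $\xi^{-1}$ as a top-down parse at each node of $T(\xi(\theta))$: starting from $v$'s leftmost child, follow the rightmost-child chain until reaching a leaf to recover the first run at $v$ in $T(\theta)$; then absorb all subsequent siblings whose labels exceed the end-of-chain value as relocated children of the end-of-run vertex, and declare the first subsequent sibling with a smaller label to be the start of the next run. This parse is unambiguous because maximality of runs in $T(\theta)$ forces the first label of any run at $v$ after $R_i$ to be strictly less than $a_{\ell_i}^{(i)}$, while relocated children (being children of $a_{\ell_i}^{(i)}$ in $T(\theta)$) have labels strictly greater; this dichotomy distinguishes the two cases cleanly. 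The main technical obstacle will be the inductive bookkeeping to confirm that the forward construction and the top-down parse are mutually inverse --- in particular, that the relocation step never alters the rightmost child of any other labeled internal vertex and is fully compatible with the hypothesis applied recursively to the altered subtrees. Assertion~(2) of the proposition, $\tnlambda(\xi(\theta))=\aalambda(\theta)$, is an immediate corollary of~(1), since both partitions are by definition the multisets of lengths of maximal chains under the respective pair relations.
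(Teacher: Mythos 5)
Your construction is correct, but note that the paper itself gives no proof of this proposition: it is imported verbatim from \cite{Dleon2013} (Proposition 4.6), and the only proof sketch the paper offers for the downstream equidistribution statement is the ternary-tree argument of Remark \ref{remark:gesselternary}, which yields equality of the \emph{types} but not the stronger pair-by-pair correspondence in item (1). So your argument is a genuinely independent, self-contained route. The encoding of $\theta\in\Q_n$ as an increasing plane tree via the nesting of blocks is the right one ($(2n-1)!!$ on both sides), and your two translations are exact: ascending adjacent pairs are ascending consecutive-sibling pairs, and terminally nested pairs are rightmost-child pairs with \emph{labeled} parent (the root being unlabeled is what makes the counts come out right). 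The run--linearize--relocate pass does what you claim; I checked it on $\Q_2$ and on $122331\mapsto 112332$, where the relocation step is genuinely needed to prevent a spurious terminally nested pair at an end-of-run vertex. Two facts carry the ``inductive bookkeeping'' and deserve to be stated explicitly rather than left implicit: first, the bottom-up recursion never alters a node's own child list (only the subtrees hanging below its children), so the maximal runs at $v$ are computed on $v$'s \emph{original} children; second, once the pass at a vertex's parent has either made it a leaf (end-of-run case) or installed its next run-element as its permanent rightmost child, no pass at any ancestor can change this, because ancestors only translate whole subtrees or append to the processed node's own child list. Together these give exactly your characterization of the labeled internal vertices of $T(\xi(\theta))$, and they are also what makes the top-down parse unambiguous (heap order puts relocated labels above the run-end, run-maximality puts the next run's opener below it). Item (2) does follow formally from item (1), since each of $\aalambda$ and $\tnlambda$ records the path lengths of the functional digraph of its pair relation. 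What your approach buys is an explicit, locally computable $\xi$ and its inverse without passing through the $\gamma$-bijection to normalized binary trees or the machinery of \cite{Dleon2013}; what it costs is that the well-definedness argument is spread over several interacting tree surgeries, whereas the cited proof can lean on structures already built elsewhere in that paper.
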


From Proposition \ref{proposition:propxi} we see that $\aalambda$ and $\tnlambda$ are 
equidistributed on $\Q_n$. Since the map $\rho$ 
that reverses $\theta$ (defined above) also implies $\aalambda \cong \dalambda$ and $\tnlambda 
\cong  \inlambda$ we have as corollaries the following two results.

\begin{theorem}\label{theorem:equilambdas}
The types $\aalambda$, $\dalambda$, $\tnlambda$ and $\inlambda$ are equidistributed.
\end{theorem}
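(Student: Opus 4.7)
The plan is to assemble the equidistribution from two ingredients already in hand: the bijection $\xi$ of Proposition \ref{proposition:propxi}, which relates $\aalambda$ and $\tnlambda$, and the reversal map $\rho : \Q_n \to \Q_n$ given by $\rho(\theta)_i = \theta_{2n+1-i}$, which I claim exchanges the ``adjacent'' and ``nested'' flavors with their mirror counterparts. Concretely, I would prove that $\rho$ induces $\aalambda(\theta) = \dalambda(\rho(\theta))$ and $\tnlambda(\theta) = \inlambda(\rho(\theta))$. Chaining the two bijections then yields
\[
\aalambda \;\cong_{\rho}\; \dalambda, \qquad \aalambda \;\cong_{\xi}\; \tnlambda, \qquad \tnlambda \;\cong_{\rho}\; \inlambda,
\]
which is precisely the assertion that all four types are equidistributed on $\Q_n$.

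The first thing to check is that $\rho$ is well defined as an involution on $\Q_n$: the Stirling condition (\ref{condition:stirling}) is invariant under reversing the word, since the triple $i<j<k$ with $\theta_i=\theta_k=m$ reverses to a triple of the same form in $\rho(\theta)$, with the middle entry still satisfying $\theta_j\ge m$. Next I would unpack the effect of reversal on blocks: for any $a$, the block $B_{\rho(\theta)}(a)$ is the reverse of $B_{\theta}(a)$, and $\mathring{B_{\rho(\theta)}}(a)$ is the reverse of $\mathring{B_{\theta}}(a)$.

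With these observations, the two key correspondences are immediate. If $(a,b)$ with $a<b$ is an ascending adjacent pair in $\theta$, so that $\theta=\alpha\,B_{\theta}(a)\,B_{\theta}(b)\,\beta$, reversing gives $\rho(\theta)=\rho(\beta)\,B_{\rho(\theta)}(b)\,B_{\rho(\theta)}(a)\,\rho(\alpha)$, so $(a,b)$ is a descending adjacent pair in $\rho(\theta)$, and conversely. Similarly, if $(a,b)$ is a terminally nested pair in $\theta$, meaning $\mathring{B_{\theta}}(a)=\gamma\,B_{\theta}(b)$, then $\mathring{B_{\rho(\theta)}}(a)=B_{\rho(\theta)}(b)\,\rho(\gamma)$, making $(a,b)$ an initially nested pair in $\rho(\theta)$. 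Since these bijections on pairs preserve the relation of lying in a common maximal chain (and depend only on the strict order $a<b$), maximal ascending adjacent sequences of $\theta$ correspond to maximal descending adjacent sequences of $\rho(\theta)$ of the same length, and likewise for terminally versus initially nested sequences. This yields the two partition identities $\aalambda(\theta)=\dalambda(\rho(\theta))$ and $\tnlambda(\theta)=\inlambda(\rho(\theta))$.

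Combining these equidistributions with $\aalambda\cong\tnlambda$ from Proposition \ref{proposition:propxi} finishes the proof. There is no real obstacle here: the entire argument is a bookkeeping exercise once one confirms that reversal interacts with block structure in the expected way. The only place where I would be careful is in verifying that the reversal genuinely sends a \emph{maximal} ascending adjacent sequence $a_1<a_2<\cdots<a_k$ of $\theta$ to a maximal descending adjacent sequence of $\rho(\theta)$ (rather than to a sub-sequence of a longer one), but this is transparent because the pair correspondence is a bijection, so maximality is preserved.
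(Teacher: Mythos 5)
Your proposal is correct and follows essentially the same route as the paper: the author likewise combines Proposition \ref{proposition:propxi} (giving $\aalambda \cong \tnlambda$ via $\xi$) with the reversal map $\rho$ (giving $\aalambda \cong \dalambda$ and $\tnlambda \cong \inlambda$), noting that the descending adjacent and initially nested notions are defined precisely by applying $\rho$ to the ascending adjacent and terminally nested ones. Your additional verification that $\rho$ preserves the Stirling condition and interacts with blocks and maximality as expected is the same bookkeeping the paper leaves implicit.
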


\begin{corollary}[\cite{Bona2009}]
The statistics $\des$, $\asc$ and $\pla$ are equidistributed.
\end{corollary}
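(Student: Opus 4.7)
The plan is to deduce this corollary as an immediate consequence of Theorem \ref{theorem:equilambdas}, which gives the stronger statement that the partition-valued types $\aalambda$, $\dalambda$, $\tnlambda$, $\inlambda$ are equidistributed on $\Q_n$. The key point is that each of the numerical statistics $\des$, $\asc$, $\pla$ can be recovered as the number of parts $\ell(\lambda)$ of one of these types, and $\ell$ is of course just a function of the underlying partition; so equidistribution of the partition-valued statistics automatically yields equidistribution of the derived integer statistics.

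More concretely, I would first verify (or simply recall from the paragraph preceding Table \ref{table:stirlingpermutationsn3}) the identifications
\[
\ell(\aalambda(\theta))=\des(\theta),\qquad \ell(\dalambda(\theta))=\asc(\theta),\qquad \ell(\tnlambda(\theta))=\ell(\inlambda(\theta))=\pla(\theta),
\]
for every $\theta\in\Q_n$. Each of these is immediate from the definitions: a descent of $\theta$ occurs precisely at the boundary between two maximal ascending adjacent sequences (together with the implicit boundary at the right end), so the number of maximal ascending adjacent sequences equals $\des(\theta)$; the argument for $\asc$ and $\dalambda$ is symmetric via $\rho$, and similarly plateaux mark the boundaries between maximal terminally nested (respectively, initially nested) sequences.

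Having established these identifications, I would conclude as follows. For any function $\phi$ from partitions to integers, if two partition-valued statistics on a finite set are equidistributed, then so are their compositions with $\phi$. Applying this with $\phi=\ell$ and using Theorem \ref{theorem:equilambdas}, we obtain that $\ell\circ\aalambda$, $\ell\circ\dalambda$, $\ell\circ\tnlambda$, $\ell\circ\inlambda$ are all equidistributed on $\Q_n$; in view of the identifications above, this is exactly the assertion that $\des$, $\asc$, and $\pla$ are equidistributed. There is no real obstacle here, since all the hard combinatorial work is already packaged into Proposition \ref{proposition:propxi} (and the bijection $\rho$); the corollary is just the numerical shadow of the partition-valued refinement.
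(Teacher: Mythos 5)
Your proposal is correct and matches the paper's own route exactly: the corollary is presented as an immediate consequence of Theorem \ref{theorem:equilambdas} together with the identifications $\ell(\aalambda)=\des$, $\ell(\dalambda)=\asc$, $\ell(\tnlambda)=\ell(\inlambda)=\pla$ recorded just before Table \ref{table:stirlingpermutationsn3}. Nothing to add.
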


\begin{remark}\label{remark:gesselternary}
Theorem \ref{theorem:equilambdas} can be proved in a different way using a bijection of 
Gessel between Stirling permutations and increasing planar ternary trees (see 
\cite{JansonKubaPanholzer2011}). The idea is that from the perspective of increasing planar ternary 
trees (reading the types after the bijection), the equidistributivity of the types 
$\aalambda$, $\dalambda$, $\tnlambda$ and $\inlambda$ is a consequence of the bijection on the 
set of increasing planar ternary trees defined by reordering
simultaneously the $3$ children of every internal node of a ternary tree using a fixed 
permutation. Since there are $6$ permutations in $\sym_3$, proving Theorem 
\ref{theorem:equilambdas} in this way also reveals that there are two other different types 
equidistributed with the ones discussed here. We leave the details of this proof to the reader.
\end{remark}

The link between normalized trees and Stirling permutations is given by the following proposition 
in \cite{Dleon2013}.

\begin{proposition}[{\cite[Proposition 4.8]{Dleon2013}}]\label{proposition:propgamma}There is a 
bijection\footnote{This bijection appeared first in \cite{Dotsenko2012}.} $\gamma: \nor_n 
\rightarrow 
\Q_{n-1}$ that satisfies for each $\Upsilon \in \nor_n$ 
 \begin{enumerate}
\item $\aalambda(\gamma(\Upsilon))=\lyndonlambda(\Upsilon)$,
\item $\tnlambda(\gamma(\Upsilon))=\comblambda(\Upsilon)$.
 \end{enumerate}
\end{proposition}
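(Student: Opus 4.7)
The plan is to construct the bijection $\gamma$ (following \cite{Dotsenko2012}) and verify the two type-preserving identities by providing a local dictionary between structural features of normalized trees and combinatorial features of Stirling permutations.

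For the construction of $\gamma$, I would use an inductive approach based on the compatible insertion structures on both sides: every $\Upsilon \in \nor_n$ arises uniquely from some $\Upsilon' \in \nor_{n-1}$ by inserting leaf $n$ as the right child of a new internal node that subdivides one of $2n-3$ available sites (the $2n-4$ edges of $\Upsilon'$ together with a ``virtual edge'' above its root), while every $\theta \in \Q_{n-1}$ arises uniquely from some $\theta' \in \Q_{n-2}$ by inserting the pair $(n-1)(n-1)$ at one of $2n-3$ positions in the word $\theta'$. Defining $\gamma$ to commute with insertion under a fixed canonical correspondence between the two sets of insertion sites produces a bijection by induction on $n$, starting from the base case $|\nor_2| = |\Q_1| = 1$.

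For property (1), I would establish the dictionary: $(a,b)$ is an ascending adjacent pair in $\gamma(\Upsilon)$ if and only if, in $\Upsilon$, the internal nodes labeled $a$ and $b$ are respectively the left and right children of a common non-Lyndon parent. The motivation is that in a DFS-style encoding, ``exit of the node labeled $a$ immediately followed by entry of the node labeled $b$'' forces the two nodes to be siblings under a parent $p$ with $L(p)$ of label $a$ and $R(p)$ of label $b$, while the required inequality $a<b$ on labels translates precisely into $v(R(L(p))) < v(R(p))$, i.e.\ $p$ is \emph{not} Lyndon. Granting this dictionary, the relation used to build $\pi^{\lyn}(\Upsilon)$ (glue $x$ to $L(x)$ whenever $x$ is not Lyndon) translates block-by-block to the relation used to build the maximal ascending adjacent sequences, yielding $\lyndonlambda(\Upsilon) = \aalambda(\gamma(\Upsilon))$.

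For property (2), the analogous dictionary is: $(a,b)$ is a terminally nested pair in $\gamma(\Upsilon)$ if and only if the internal node labeled $b$ is the right child of the internal node labeled $a$ in $\Upsilon$. This is natural, because if $y = R(x)$ is internal then the block of $y$ sits at the right end of $\mathring{B}_{\gamma(\Upsilon)}(a)$, and normalization forces $v(R(x)) = v(y) = v(L(y)) < v(R(y))$, which supplies the required inequality $a<b$ on labels. With this dictionary the defining relation of $\pi^{\comb}$ (glue $x$ to $y$ whenever $y=R(x)$) translates term-by-term to the adjacency relation defining maximal terminally nested sequences, yielding $\comblambda(\Upsilon) = \tnlambda(\gamma(\Upsilon))$.

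The main obstacle is arranging the insertion correspondence (equivalently, the internal-node labeling that makes the DFS-style encoding Stirling) so that both dictionaries hold under the \emph{same} bijection $\gamma$, rather than needing two separate bijections. The inductive verification then reduces to a case analysis of how inserting leaf $n$ at each of the $2n-3$ sites modifies $\pi^{\lyn}$ and $\pi^{\comb}$, matched against how inserting the pair $(n-1)(n-1)$ at the corresponding position modifies the ascending adjacent and terminally nested components of the Stirling permutation. This matching is routine in spirit but contains enough cases (depending on whether the insertion site borders a Lyndon or non-Lyndon node, and whether it lies inside or outside an existing right-child chain) to constitute the bulk of the technical work.
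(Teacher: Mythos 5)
First, a point of reference: the paper gives no proof of this proposition at all --- it is quoted verbatim from \cite{Dleon2013}, with the bijection attributed to \cite{Dotsenko2012} --- so there is no in-paper argument to measure yours against; I can only judge the proposal on its own terms. It has two genuine gaps. The first is that $\gamma$ is never actually defined: in your insertion scheme everything hinges on the ``fixed canonical correspondence'' between the $2n-3$ tree-insertion sites and the $2n-3$ word-insertion positions, and that correspondence is exactly what you leave unspecified (you yourself flag it as the main obstacle). For the record, the bijection of \cite{Dleon2013,Dotsenko2012} is explicit and needs no induction: give each internal node $x$ of $\Upsilon$ the label $\ell(x)=v(R(x))-1$ (normalization makes these labels distinct and makes them exhaust $[n-1]$), and set $\gamma(\Upsilon)=w(\mathrm{root})$, where $w(x)=w(L(x))\,\ell(x)\,w(R(x))\,\ell(x)$ and $w(\mathrm{leaf})=\emptyset$.

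The second, more serious, gap is that your dictionary for property (1) is wrong. With the labeling above, the block of $\ell(x)$ in $\gamma(\Upsilon)$ is $\ell(x)\,w(R(x))\,\ell(x)$, and it is immediately preceded by $w(L(x))$, which ends with the second occurrence of $\ell(L(x))$ whenever $L(x)$ is internal; hence the ascending adjacent pair produced by a non-Lyndon node $x$ is $\bigl(\ell(L(x)),\ell(x)\bigr)$ --- the labels of $x$'s \emph{left child and of $x$ itself} --- not, as you assert, the labels of the two children of $x$. Your own justification betrays the slip: you translate $a<b$ into $v(R(L(p)))<v(R(p))$, and $v(R(p))$ is the datum attached to $p$, not to $R(p)$. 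The distinction is not cosmetic. Take $\Upsilon\in\nor_3$ with an internal node $y$ over leaves $1,2$ and root $p$ with children $y$ and leaf $3$: then $p$ is not Lyndon, $\lyndonlambda(\Upsilon)=(2)$, and $\gamma(\Upsilon)=1122$ has $\aalambda=(2)$ via the pair $(1,2)=(\ell(y),\ell(p))$ --- but $R(p)$ is a leaf, so your sibling dictionary finds no pair at all and predicts $(1,1)$. Moreover, the block-by-block translation you invoke forces the corrected dictionary, since $\pi^{\lyn}$ glues a non-Lyndon node to its \emph{left child}: a left chain $x,L(x),L(L(x))$ forming a block of size $3$ must correspond to the single ascending adjacent sequence $\ell(L(L(x)))<\ell(L(x))<\ell(x)$, which cannot happen if each pair consists of two siblings. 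Your dictionary for property (2) (terminally nested pairs correspond to internal right children, matching $\pi^{\comb}$) is correct, and the inequality argument you give for it is sound.
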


\begin{proof}[Proof of Theorem \ref{theorem:exponentialcompositional}]
This is a corollary of the results in Propositions 
\ref{proposition:propxi} and 
\ref{proposition:propgamma}; and  Theorems \ref{theorem:exponentialcompositionallyndontrees} and 
\ref{theorem:exponentialcompositionalcombs}.
\end{proof}


\subsection{$r$-Stirling permutations or $r$-multipermutations}\label{section:multipermutations}

An \emph{$r$- Stirling permutation} or \emph{$r$-multipermutation} is a permutation of the multiset
$\{1^r,2^r,\cdots,n^r\}$ satisfying condition (\ref{condition:stirling}). We denote the 
set of $r$-Stirling permutations by $\Q_n(r)$. For example, $1 2 3 3 3 2 2 1 5 5 5 
1 4 6 6 6 4 4$ is 
in $\Q_n(3)$. In particular $\Q_n(2)=\Q_n$ and $\Q_n(1)=\sym_n$. We want to extend the 
right-hand side of equation (\ref{equation:exponentialcompositionalinverse}) to the generality of 
$r$-Stirling permutations.

For a permutation $\theta \in \Q_n(r)$, assuming always that $\theta_0=\theta_{rn+1}=0$, we
define the sets $\mathrm{DES}$ and $\mathrm{ASC}$ and the statistics $\des$ and $\asc$ as before.
Let $n(\theta,i)$ denote the number of occurrences of the label $\theta_i$ in the subword
$\theta_1\theta_2\cdots\theta_{i}$.  We use a refinement of $\mathrm{PLA}$ defined in
\cite{JansonKubaPanholzer2011}, the set
\begin{align}
\mathrm{PLA}_j(\theta)&=\{i\,|\,\theta_{i}=\theta_{i+1},\,n(\theta,i)=j\}\nonumber
\end{align}
of \emph{$j$-plateaux} (plateaux between the occurrences
$j$ and $j+1$ of a label) and the statistic $\pla_j(\theta)=|\mathrm{PLA}_j(\theta)|$ its 
cardinality. In 
\cite{JansonKubaPanholzer2011} it is shown that  $\des$, $\asc$ and $\pla_j$ are 
equidistributed in $\Q_n(r)$.

\subsubsection{Types on the set of $r$-Stirling 
permutations}\label{subsection:typesmultipermutation}
A \emph{block} $B_{\theta}(a)$ in an $r$-Stirling permutation $\theta$ is a segment of $\theta$ 
that starts and ends with $a$ and contains all the occurrences of $a$ in $\theta$. For example, 
$B_{\theta}(1)=122214555441$ is a block of $\theta=122214555441333$. Removing 
all occurrences of $a$ in $B_{\theta}(a)$ gives a sequence $\mathring{B_{\theta}}(a)$  of 
(possibly empty) $r$-Stirling permutations $\mathring{B_{\theta}}(a)_j$ for $j=1,\dots,r-1$. For 
example $\mathring{B_{\theta}}(1)=(222,455544)$.

We call $(a,b)$ an \emph{ascending adjacent pair} in $\theta \in \Q_n(r)$ if $a<b$ and the blocks 
$B_{\theta}(a)$ and 
$B_{\theta}(b)$ are adjacent in $\theta$, i.e., $\theta=\theta^{\prime} 
B_{\theta}(a)B_{\theta}(b)\theta^{\prime\prime}$. An 
\emph{ascending adjacent sequence} of $\theta$ of length $k$ is a 
subsequence  $a_1<a_2<\cdots<a_k$ such that $(a_j,a_{j+1})$  is an ascending adjacent pair for 
$j=1,\dots,k-1$.
Similarly, for $j \in [r-1]$ we call $(a,b)$ a
\emph{$j$-terminally nested pair} if $a<b$ and the block $B_{\theta}(b)$ is the last 
block in $\mathring{B_{\theta}}(a)_j$, i.e., 
$\mathring{B_{\theta}}(a)_j=\theta^{\prime} 
B_{\theta}(b)$ for some $r$-Stirling permutation $\theta^{\prime}$. A 
\emph{$j$-terminally nested sequence} of 
$\theta$ of length $k$ is a 
subsequence  $a_1<a_2<\cdots<a_k$ such that $(a_s,a_{s+1})$  is a $j$-terminally nested pair for 
$s=1,\dots,k-1$.
If we apply the map $\rho$ that reverses the permutation to the two definitions above we obtain the 
notions of \emph{descending adjacent} and \emph{$j$-initially nested} pairs and sequences.

We then associate a \emph{type} to an $r$-Stirling permutation $\theta \in \Q_n(r)$ in different 
ways according to the lengths of maximal sequences of a given type as before. We 
define in this way the \emph{ascending adjacent type} $\aalambda(\theta)$, the 
\emph{$j$-terminally nested 
type} $\tnlambda_j(\theta)$, the \emph{descending adjacent type} $\dalambda(\theta)$ and the  
\emph{$j$-initially nested type} $\inlambda_j(\theta)$. Note that similar to the case $r=2$, in the 
general case 
$\aalambda$ refines $\des$, $\dalambda$ refines $\asc$ and both $\tnlambda_j$ and $\inlambda_j$ 
refine $\pla_j$.

The proof of the following theorem is similar to the proof of Theorem \ref{theorem:equilambdas} in 
\cite{Dleon2013}.

\begin{theorem}\label{theorem:equilambdasgeneral}
The types $\aalambda$, $\dalambda$, $\tnlambda_j$ and $\inlambda_j$ for all $j=1,\dots,r-1$ are 
equidistributed.
\end{theorem}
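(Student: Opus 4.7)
The plan is to extend the ternary-tree approach sketched in Remark \ref{remark:gesselternary} to general $r$, using a bijection $\Gamma_r\colon\Q_n(r)\to \mathcal{T}_n^{(r+1)}$ between $r$-Stirling permutations and increasing planar $(r+1)$-ary trees on $n$ internal nodes, together with the natural $\sym_{r+1}$ action permuting the children of every internal node.

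First I define $\Gamma_r$ recursively: for a tree $T$ rooted at $a$ with ordered children subtrees $T_0,T_1,\dots,T_r$, set
\begin{equation*}
\Gamma_r(T)=\Gamma_r(T_0)\cdot a\cdot\Gamma_r(T_1)\cdot a\cdots a\cdot\Gamma_r(T_r),
\end{equation*}
with $a$ as separator between the $r+1$ subtree contributions. The increasing condition ensures that the resulting multipermutation satisfies (\ref{condition:stirling}), and the cardinality identity $|\Q_n(r)|=\prod_{k=0}^{n-1}(kr+1)=|\mathcal{T}_n^{(r+1)}|$ confirms bijectivity. Next I translate the types into tree statistics by recursive unwinding of $\Gamma_r$: under the bijection, $\mathring{B_\theta}(a)_j$ corresponds to $\Gamma_r(T_j)$ for $1\le j\le r-1$, while $\Gamma_r(T_0)$ and $\Gamma_r(T_r)$ correspond to the contextual material before and after $B_\theta(a)$ within its parent's subword. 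One then verifies that each of the $2r$ types is encoded by maximal paths in the tree following a prescribed child-position pattern $(k,d)$, where $k\in\{0,1,\dots,r\}$ is the first-step child position, $d\in\{0,r\}$ is the ``direction'' for all subsequent steps, and the terminal condition requires the $d$-th child of the last node to be a leaf; explicitly, $\aalambda$ corresponds to $(r,0)$, $\dalambda$ to $(0,r)$, $\tnlambda_j$ to $(j,r)$, and $\inlambda_j$ to $(j,0)$.

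Finally, for each $\pi\in\sym_{r+1}$ the bijection $\Phi_\pi\colon\mathcal{T}_n^{(r+1)}\to\mathcal{T}_n^{(r+1)}$ that simultaneously reorders the $r+1$ children of every internal node according to $\pi$ transforms a chain of pattern $(k,d)$ in $T$ into a chain of pattern $(\pi(k),\pi(d))$ in $\Phi_\pi(T)$, preserving maximality. Since all six pattern types $(r,0),(0,r),(j,r),(j,0)$ satisfy $k\ne d$, and we can always choose $\pi\in\sym_{r+1}$ realizing any assignment $\pi(k_1)=k_2$ and $\pi(d_1)=d_2$, we obtain bijections between any two of the $2r$ types, proving their equidistribution. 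The main obstacle is the second step: a single chain edge $(a,b)$ of a given type unfolds as a short path in the tree with a precise sequence of child positions plus a terminal-leaf condition, so one must carefully verify that maximal chains on the permutation side correspond bijectively to maximal such paths on the tree side, and that the $\sym_{r+1}$ action respects this correspondence including the terminal-leaf conditions.
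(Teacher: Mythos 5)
Your proposal is correct, and it is essentially the alternative argument that the paper itself sketches (and leaves to the reader) in the remark immediately following the theorem, generalizing Remark \ref{remark:gesselternary}: transport the types through Gessel's bijection to increasing planar $(r+1)$-ary trees and act by $\sym_{r+1}$ on the ordered children of every internal node. The paper's primary proof takes a different route: it generalizes the proof of Theorem \ref{theorem:equilambdas}, i.e., it extends the explicit bijection $\xi$ of Proposition \ref{proposition:propxi} from \cite{Dleon2013} (which converts ascending adjacent pairs into terminally nested pairs) and combines it with the reversal map $\rho$. That route stays on the permutation side and produces concrete bijections on $\Q_n(r)$, but needs a separate map for each pair of types; your $\sym_{r+1}$ action handles all $2r$ types uniformly and, as a bonus, exhibits the additional equidistributed types corresponding to the remaining patterns $(k,d)$ with $k\neq d$. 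Two small points to tighten, neither fatal: your dictionary should distinguish the two leaf conditions in play --- within a single pair $(a,b)$ of pattern $(k,d)$ the descent stops when the $d$-th child becomes empty, whereas a maximal chain $a_1<\cdots<a_m$ terminates because the $k$-th child of $a_m$ is empty (for $\aalambda$ the chain ends when child $r$, not child $0$, of the last element is a leaf) --- and ``all six pattern types'' should read ``all $2r$ pattern types.'' Since $\Phi_\pi$ carries both leaf conditions to the corresponding conditions for $(\pi(k),\pi(d))$, the equidistribution argument goes through unchanged.
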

\begin{remark}

We can also prove Theorem \ref{theorem:equilambdasgeneral} following the same idea discussed in 
Remark \ref{remark:gesselternary}. This time we use instead a more general bijection of Gessel 
between $r$-Stirling permutations and increasing planar $(r+1)$-ary trees.
\end{remark}

With the more general definitions in place we can consider the 
family of symmetric functions
\begin{align}\label{definition:s2}
 \SP_n^{(r)}(\xx)=\sum_{\theta \in \Q_n(r)}e_{\lambda(\theta)}(\xx),
\end{align}
where $\lambda(\theta)$ is any of the types of $\theta$ defined above.

The question is to determine if this more general definition provides interesting results and 
has any combinatorial applications for some $r \neq 2$. We will show in 
Section \ref{section:permutations} that for $r=1$ there is a positive answer with a very similar 
story to the one for $r=2$.

\section{The case $r=1$ of standard permutations of $[n]$}\label{section:permutations}
In this section we prove Theorem \ref{theorem:exponentialmultiplicative}. To prove this theorem we 
first introduce a combinatorial interpretation of the multiplicative inverse of an 
exponential generating function in terms of words with allowed and
forbidden links that follows from a theorem discovered by Fr\"oberg 
\cite{Froberg1975}, Carlitz-Scoville-Vaughan \cite{CarlitzScovilleVaughan1976} and Gessel 
\cite{Gessel1977}.
The theory outlined in \cite{CarlitzScovilleVaughan1976} and \cite{Gessel1977} is more general and 
applies to a larger family of counting algebras (as defined in \cite{Gessel1977}) and not only to 
exponential generating functions. Here we give a simplified description that applies to the 
exponential generating function result.
\subsection{Combinatorial interpretation of the multiplicative 
inverse}\label{subsection:gesselinterpretation}
We consider permutations of any finite label set $A\subset \PP$.
In particular any label by itself and the empty word $\emptyset$ are examples of 
permutations. For any two permutations $w_1$ and $w_2$ that have disjoint label sets $A_1,A_2 
\subset \PP$ we define the 
\emph{product} $w_1w_2$ to be the permutation with label set $A_1 \cup A_2$ constructed by 
concatenation. 
If these conditions are not satisfied the product is not defined. For example if $w_1=1345$ and 
$w_2=276$ then $w_1w_2=1345276$. Note 
that the product is associative and so expressions like $w_1w_2\cdots w_k$
are well defined. Two permutations $w_1$ and $w_2$ with label sets $A_1, A_2 \subset \PP$ such that 
$|A_1|=|A_2|$ are said to be \emph{equivalent}, and we write 
$w_1\sim w_2$, if we can obtain $w_2$ from $w_1$ by replacing the labels in $w_1$ according to the 
unique order preserving bijection between $A_1$ and $A_2$.
If $\A$ is a set of permutations, $\A$ is said to have the \emph{label substitution
property} if whenever $w_1\sim w_2$ then $w_1 \in \A$ if and only if $w_2 \in \A$.
In a set $\A$ of permutations, we call $w \in \A$ \emph{irreducible} 
if $w$ is a nonempty permutation such that $w=w_1w_2$ and $w_1,w_2 \in \A$ imply either $w_1=w$ or 
$w_2=w$, i.e., a permutation  that cannot be decomposed as the concatenation of other permutations 
in $\A$. A set 
$\A$ of permutations is said to have the \emph{unique decomposition property} if all its 
permutations are irreducible.
We call an \emph{alphabet} a set $\A$ of permutations that has the label substitution property and 
the unique decomposition 
property. The elements of $\A$ are called the
\emph{letters} of the alphabet. The set of permutations (including the empty permutation) that can 
be constructed 
as the product of letters in $\A$ is denoted $\A^*$. We can also consider alphabets
$\A_S$ that are formed by colored letters, i.e., pairs $(a,s)$ where $a\in \A$ and $s \in S$ for 
some set $S$. We call a \emph{link} the product of two (colored) letters. 
Assume that the set $\A_S$ of colored letters is partitioned into equivalence classes and let 
$K$ be the set of equivalence classes. For a 
permutation 
$w \in \A_S^*$ we denote by $m_j(w)$ the number of letters from the class $j \in K$ that are 
present 
in $w$ and $|w|:=\sum_{j\in K}m_j(w)$ the \emph{length} of $w$.
Consider a partition of the set of links into two parts that we call the 
\emph{allowed 
links} $\L(\A_S)$ and 
\emph{forbidden links} $\overline{\L(\A_S)}$. Let $\W_S^n$ be the set of 
permutations with underlying label set $[n]$ for $n \ge 0$ constructed with only allowed links and 
let 
$\overline{\W_S}^n$ the ones constructed 
using only forbidden links. Define  $\W_S=\cup_{n\ge 0} \W_S^n$ and 
$\overline{\W_S}=\cup_{n\ge 0} \overline{\W_S}^n$. In particular, 
$\W_S^0=\overline{\W_S}^0=\emptyset$ and we consider letters in $\A_S$ as if they are both in 
$\W_S$ and $\overline{\W_S}$.

Define the monomials
\begin{align*}
X^{m(w)}=\prod_{j \in K} x_j^{m_j(w)},
\end{align*}
and the generating functions
  \begin{align*}  
 F(y)&=\sum_{n \ge 0}\sum_{w \in \W_S^n}X^{m(w)}\frac{y^n}{n!},\\
  \overline{F}(y)&=\sum_{n \ge 0}\sum_{w \in \overline{\W_S}^n}(-1)^{|w|}X^{m(w)}\frac{y^n}{n!}.
\end{align*}

\begin{theorem}[c.f.\cite{Gessel1977}]\label{theorem:gessel}
We have
\[
 F^{-1}(y)=\overline{F}(y).
\]
\end{theorem}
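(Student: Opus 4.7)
The plan is to show $F(y)\overline{F}(y) = 1$ by constructing a sign-reversing involution on ordered pairs of words with complementary label sets. By the usual combinatorial interpretation of the product of exponential generating functions, together with the label substitution property of $\A_S$,
\[
\left[\frac{y^n}{n!}\right] F(y)\overline{F}(y) = \sum_{S \subseteq [n]} \ \sum_{\substack{w_1 \in \W_S \\ \mathrm{labels}(w_1)=S}} \ \sum_{\substack{w_2 \in \overline{\W_S} \\ \mathrm{labels}(w_2) = [n]\setminus S}} (-1)^{|w_2|} X^{m(w_1)+m(w_2)}.
\]
For $n=0$ only the pair of empty words contributes, giving $1$. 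It therefore suffices to exhibit, for each $n \geq 1$, a weight-preserving, sign-reversing involution without fixed points on the set of such pairs.

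I would define the involution $\iota(w_1,w_2)$ by the following cases. If $w_1$ is empty and $w_2 = b w_2'$, set $\iota(w_1,w_2) = (b, w_2')$. If $w_2$ is empty and $w_1 = w_1' a$, set $\iota(w_1,w_2) = (w_1', a)$. If both are nonempty, let $a$ be the last letter of $w_1$ and $b$ the first letter of $w_2$; if the link $(a,b)$ is allowed, move $b$ to the end of $w_1$, and if $(a,b)$ is forbidden, move $a$ to the beginning of $w_2$. In every case exactly one letter crosses the boundary, so $|w_2|$ changes by $\pm 1$ and the sign is reversed.

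The verification has three ingredients. First, the output pair still lies in $\W_S \times \overline{\W_S}$ on complementary label sets: the internal links of $w_1$ and $w_2$ are unaffected, and the link across which the letter moves has precisely the type (allowed or forbidden) required by its new host. Second, $\iota$ has no fixed points when $n \geq 1$ because at least one of $w_1,w_2$ is nonempty and the prescription always moves a letter. Third, $\iota^2 = \mathrm{id}$: after applying $\iota$, the new boundary link, if both words remain nonempty, is the next internal link of the original $w_1$ or $w_2$, whose allowed/forbidden status is precisely what is needed to send the moved letter back; the empty-word cases pair off with the length-one subcases of ``both nonempty'' in the same way.

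The main obstacle is the case analysis verifying $\iota^2=\mathrm{id}$, specifically the bookkeeping at the boundary cases where $w_1$ or $w_2$ is empty or becomes empty after one application of $\iota$. Once it is recognized that the link exposed at the new boundary is a predetermined internal link of the original pair (allowed on the $w_1$ side, forbidden on the $w_2$ side), the four principal cases (empty--nonempty, nonempty--empty, allowed crossing, forbidden crossing) match up in pairs and the involution property follows.
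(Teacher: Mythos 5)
Your proof is correct and takes essentially the same approach as the paper: it reduces the claim to $F(y)\overline{F}(y)=1$ via the product formula for exponential generating functions and then cancels all nonempty pairs with a sign-reversing involution that moves a single letter across the boundary between the allowed word and the forbidden word. Your explicit letter-moving rule is exactly the paper's involution, which is phrased there as swapping the two possible factorizations of each nonempty $(\W_S,\overline{\W_S})$-composite word.
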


For the sake of completeness we provide a proof of Theorem \ref{theorem:gessel}. The idea of the 
proof is the one that appears in \cite{Gessel1977} where a more general version of this theorem 
is proved.

To prove Theorem \ref{theorem:gessel} we consider the set of permutations of the form $w=w_1w_2$ 
where 
$w_1\in \W_S$ and $w_2\in \overline{\W_S}$. Note that if $w \neq \emptyset$ then $w$ has exactly 
two different factorizations. Indeed, if $w=a_1a_2\dots a_n$ then either there is a number $1\le k< 
n$ such 
that $a_ka_{k+1} \in \L(\A_S)$ but $a_{k+1}a_{k+2} \in \overline{\L(\A_S)}$, or $a_ia_{i+1} \in 
\L(\A_S)$ for all $i$ (in which case we let $k=n$) or $a_ia_{i+1} \in \overline{\L(\A_S)}$ for all 
$i$ (in which case we let $k=0$). Assuming that $a_0=a_{n+1}=\emptyset$ the two valid 
factorizations of $w$ are $(a_0\dots a_{k+1})(a_{k+2}\dots a_{n+1})$ and $(a_0\dots 
a_{k})(a_{k+1}\dots a_{n+1})$. Here we want to consider the two different factorizations of $w$ as 
different objects. We call these objects (a permutation $w$ together with its factorization) 
\emph{$(\W_S,\overline{\W_S})$-composite permutations}. 

\begin{lemma}\label{lemma:compositewords}
The multiplication $F(y)\overline{F}(y)$ is the exponential generating function for 
$(\W_S,\overline{\W_S})$-composite permutations $w$ weighted by $(-1)^{m_f}\xx^{m(w)}$ where $m_f$ 
is the 
number of letters in the forbidden permutation.
\end{lemma}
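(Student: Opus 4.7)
The plan is to apply the standard combinatorial rule for the product of two exponential generating functions (cf.\ Stanley, EC1) and then identify the resulting objects with $(\W_S, \overline{\W_S})$-composite words. First I would expand the product directly as
\[
F(y)\overline{F}(y) = \sum_{n \ge 0}\left(\sum_{k=0}^n \binom{n}{k} f_k(X)\,\overline{f}_{n-k}(X)\right)\frac{y^n}{n!},
\]
so it suffices to show that the inner coefficient of $y^n/n!$ equals the weighted count of $(\W_S,\overline{\W_S})$-composite words on label set $[n]$.

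Next I would unwind the inner sum bijectively. The factor $\binom{n}{k}$ encodes the choice of a subset $S_1 \subseteq [n]$ of size $k$, with complement $S_2 = [n] \setminus S_1$. Using the label substitution property of the alphabet, each word $u \in \W_S^k$ yields a unique word $w_1$ on label set $S_1$ via the order-preserving bijection $[k] \to S_1$, and similarly each $v \in \overline{\W_S}^{n-k}$ yields a unique word $w_2$ on $S_2$, with weights $\xx^{m(u)} = \xx^{m(w_1)}$ and $(-1)^{|v|}\xx^{m(v)} = (-1)^{|w_2|}\xx^{m(w_2)}$. Thus the inner coefficient equals
\[
\sum_{S_1 \subseteq [n]}\ \sum_{\substack{w_1 \in \W_S,\ \mathrm{labels}(w_1)=S_1\\ w_2 \in \overline{\W_S},\ \mathrm{labels}(w_2)=S_2}} (-1)^{|w_2|}\xx^{m(w_1)+m(w_2)}.
\]

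Then I would identify each triple $(S_1, w_1, w_2)$ with a $(\W_S, \overline{\W_S})$-composite word on $[n]$. By definition such a composite word is a word $w$ \emph{together with} a chosen factorization $w = w_1 w_2$ with $w_1 \in \W_S$ and $w_2 \in \overline{\W_S}$; specifying $S_1$ together with $w_1$ and $w_2$ is exactly the same data as specifying the concatenation $w = w_1 w_2$ and remembering the cut point at position $|S_1|$. The weight $(-1)^{|w_2|}\xx^{m(w_1)+m(w_2)} = (-1)^{m_f}\xx^{m(w)}$ with $m_f = |w_2|$ matches the required weighting.

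I do not expect a real obstacle: the only subtlety is that a composite word carries its factorization as part of its data, so the two possible factorizations of an underlying word (mentioned in the paragraph preceding the lemma) are counted as distinct objects — which is exactly what the product formula produces, since different choices of $S_1$ give different composite words even when the concatenated underlying word is the same.
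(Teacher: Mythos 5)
Your proposal is correct and is essentially the paper's proof: the paper simply cites the standard combinatorial interpretation of the product of exponential generating functions, and your argument is a careful unwinding of exactly that interpretation (binomial coefficient as choice of label subset, relabeling via the label substitution property, and identifying each pair $(w_1,w_2)$ with a composite word carrying its factorization). Your closing remark correctly addresses the one subtlety, namely that composite words are counted with their factorization as part of the data.
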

\begin{proof}
This follows from the combinatorial interpretation of multiplication of exponential generating 
functions in \cite[Proposition 5.1.1]{Stanley1999}.
\end{proof}

\begin{proof}[Proof of Theorem \ref{theorem:gessel}] Using Lemma \ref{lemma:compositewords} we only 
need to show 
that the weighted exponential 
generating function for $(\W_S,\overline{\W_S})$-composite permutations is equal to $1$.
 We define a sign-reversing involution $\iota$ on the set of 
$(\W_S,\overline{\W_S})$-composite permutations where the only fixed point is the empty permutation 
(whose factorization is unique). Let $w$ be a nonempty $(\W_S,\overline{\W_S})$-composite 
permutation, by the comments above we know that the underlying permutation of $w$ can be associated 
with two different factorizations $w$ and $w^{\prime}$. We then define 
$\iota(w)=w^{\prime}$ if $w\neq \emptyset$ and $\iota(\emptyset)=\emptyset$. This process is an 
involution that reverses the sign as defined in Lemma \ref{lemma:compositewords} since $w$ and 
$w^{\prime}$ differ by one in the number of letters in the forbidden permutation.
\end{proof}

\subsection{Colored permutations}
Let $S$ be any subset of $\PP$.
 A \emph{colored permutation} is a permutation $\sigma \in \sym_n$ in which each letter
$j \in [n]$ has been asigned a color $\clr(j) \in S$ with the condition that for
every occurrence of an ascending adjacent pair $\sigma(j)<\sigma(j+1)$ in $\sigma$ it must happen 
that $\clr(\sigma(j))>\clr(\sigma(j+1))$. For example for $S=[3]$,
 $\sigma=1^{3}2^{2}4^{1}3^{3}5^{2}$ is a colored permutation with the 
colored letters 
$(i,\clr(i))$ represented as $i^{\clr(i)}$. Since in any ascending adjacent sequence of 
$\sigma \in \sym_n$ the colors need to strictly decrease, $e_{\aalambda(\sigma)}(\xx)$ enumerates 
the colored permutations with colors in $S=\PP$ and underlying uncolored permutation $\sigma$.

\begin{proof}[Proof of Theorem \ref{theorem:exponentialmultiplicative}]
Let $\A_S$ be the alphabet with colored letters $a^c$ where $a \in [n]$ and $c \in \PP$. Consider 
the set of forbidden links $\overline{\L(\A_s)}$ to be of the form $a^{c_1}b^{c_2}$ with
$a<b$ and $c_1 \le c_2$. The forbidden permutations are
of the form $1^{c_1}2^{c_2}\cdots n^{c_n}$ with $c_1 \le c_2 \le \cdots \le c_n$. Since each 
of the forbidden colored permutations is completely determined after selecting a multiset of colors, 
the
generating polynomial of the forbidden permutations is $h_{n}(\xx)$ and 

\[\overline{F}(y)=\sum_{n\ge0}(-1)^{n}h_{n}(\xx)\frac{y^n}{n!}.\]
 
The allowed permutations are colored permutations whose generating function corresponds by the 
comments 
above to the right-hand 
side of equation (\ref{equation:exponentialmultiplicativeinverse}). Applying Theorem 
\ref{theorem:gessel} 
completes the proof of this theorem.
\end{proof}

\section{Specializations}\label{section:specializations}

A \emph{composition} $\nu$ is a weak composition such that $\nu(i)\neq 0$ for all $i \le 
\ell(\nu)$, where $\ell(\nu)$ is the largest position in $\nu$ with a nonzero entry.
In other words, a composition can be thought of as a finite weak composition with strictly positive 
parts. 
For example $(3,2,1,1,2,1)$ is a composition of $10$. We write $\comp_n$ to denote the set of 
compositions of $n$. 
To every composition $\nu \in \comp_n$ we can associate a partition 
$\lambda(\nu)$ as the nonincreasing rearrangement of the parts of $\nu$. For example, 
$\lambda(3,2,1,1,2,1)=(3,2,2,1,1,1)$.

The following Proposition is a standard result in the theory of symmetric functions and can be 
obtained as a special case of a more general Theorem of E\u gecio\u glu and Remmel 
in \cite{OmerRemmel1991}.

\begin{proposition}[{c.f. \cite[Theorem 2.3]{OmerRemmel1991}}]\label{proposition:htoe} For every 
$n\ge 
0$,
\begin{align*}
 h_n(\xx)=(-1)^n\sum_{\nu \in \comp_n}(-1)^{\ell(\nu)}e_{\nu}(\xx).
\end{align*}
\end{proposition}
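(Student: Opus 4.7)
The plan is to derive this identity directly from Proposition \ref{proposition:ordinarymultiplicative} by formally inverting the generating function and expanding as a geometric series. I do not expect a serious obstacle here; the work is essentially bookkeeping, and the only genuine observation needed is that ordering the parts of a composition does not change its length.

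First I would rewrite Proposition \ref{proposition:ordinarymultiplicative} in the equivalent form
\begin{align*}
\sum_{n\ge 0}(-1)^{n}h_{n}(\xx)\,y^{n}=\left(\sum_{n\ge 0}e_{n}(\xx)\,y^{n}\right)^{-1}=\left(1+\sum_{n\ge 1}e_{n}(\xx)\,y^{n}\right)^{-1}.
\end{align*}
Next I would expand the right-hand side as a formal geometric series:
\begin{align*}
\left(1+\sum_{n\ge 1}e_{n}(\xx)\,y^{n}\right)^{-1}=\sum_{k\ge 0}(-1)^{k}\left(\sum_{n\ge 1}e_{n}(\xx)\,y^{n}\right)^{k}.
\end{align*}

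The key step is to recognize the $k$-th power as a sum over compositions. Writing out the product, for each $k\ge 0$,
\begin{align*}
\left(\sum_{n\ge 1}e_{n}(\xx)\,y^{n}\right)^{k}=\sum_{\substack{\nu\in\comp \\ \ell(\nu)=k}} e_{\nu_1}(\xx)\,e_{\nu_2}(\xx)\cdots e_{\nu_k}(\xx)\,y^{|\nu|}=\sum_{\substack{\nu\in\comp \\ \ell(\nu)=k}} e_{\lambda(\nu)}(\xx)\,y^{|\nu|},
\end{align*}
using that $e_\lambda$ is defined multiplicatively and is invariant under reordering of the parts, so that $e_{\nu_1}\cdots e_{\nu_k}=e_{\lambda(\nu)}$. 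Summing over $k$ and grouping by total weight $|\nu|=n$ gives
\begin{align*}
\left(1+\sum_{n\ge 1}e_{n}(\xx)\,y^{n}\right)^{-1}=\sum_{n\ge 0}\left(\sum_{\nu\in\comp_n}(-1)^{\ell(\nu)}e_{\lambda(\nu)}(\xx)\right)y^{n}.
\end{align*}

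Finally, I would extract coefficients of $y^n$ from the two sides of the identity displayed at the start and use the obvious fact that $\ell(\nu)=\ell(\lambda(\nu))$ (rearranging parts preserves their number) to obtain
\begin{align*}
(-1)^{n}h_{n}(\xx)=\sum_{\nu\in\comp_n}(-1)^{\ell(\lambda(\nu))}e_{\lambda(\nu)}(\xx),
\end{align*}
which, after multiplying by $(-1)^n$, is the claimed identity. The special case $n=0$ holds trivially, as both sides equal $1$ (with the empty composition contributing $e_\emptyset=1$ and $\ell=0$).
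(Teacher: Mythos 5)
Your proof is correct. It takes a somewhat different route from the paper's: you invert Proposition \ref{proposition:ordinarymultiplicative} and expand $\bigl(1+\sum_{n\ge1}e_n(\xx)y^n\bigr)^{-1}$ as a formal geometric series, reading off the $k$-th power as the sum over compositions of length $k$; the paper instead argues by induction on $n$, peeling off the last part of each composition to produce the recurrence $\sum_{\nu}(-1)^{\ell(\lambda(\nu))}e_{\lambda(\nu)}=-\sum_{k=1}^n e_k\sum_{\nu\in\comp_{n-k}}(-1)^{\ell(\lambda(\nu))}e_{\lambda(\nu)}$ and then closing the induction with the orthogonality identity $\sum_{k=0}^n(-1)^{n-k}e_k h_{n-k}=\delta_{n,0}$. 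The two arguments rest on exactly the same input --- that identity is precisely Proposition \ref{proposition:ordinarymultiplicative} in coefficient form, and your geometric series is the closed form obtained by iterating the paper's recurrence --- but your version dispenses with induction and makes the combinatorial meaning of the composition sum (one factor of $E(y)$ per part) completely explicit, while the paper's version stays at the level of coefficients and never needs to justify the formal inversion of a power series with noninvertible-looking tail. All the individual steps you use (invertibility of $1+E(y)$ since $E$ has zero constant term, multiplicativity of $e_\lambda$, $\ell(\nu)=\ell(\lambda(\nu))$, the $n=0$ base case) check out.
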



Let $E:\Lambda \rightarrow \QQ[t]$ be the map defined by $E(e_i(\xx))=t$ for all $i \ge 1$ and 
$E(1)=1$. Since $E$ is defined on the generators $e_i$ it is immediate to check that 
$E$ is an algebra 
homomorphism or \emph{specialization}.
In the same manner it is also easy to verify that the specialization $E$ extends to a 
specialization $\tilde E:\Lambda[[y]] \rightarrow \QQ[t][[y]]$ in the algebra of power series in 
$y$ with symmetric function coefficients in $\Lambda$ (with variables $\xx$) defined by applying 
$E$ coefficientwise. Moreover,  it is also true and easy 
to verify that $\tilde E$ is a monoid homomorphism  $(\Lambda[[y]],\circ) 
\rightarrow (\QQ[[y]],\circ)$, where $\circ$ indicates composition of power series, i.e., for power 
series $f,g \in \Lambda[[y]]$
$\tilde E(f[g(y)])=\tilde E(f)[\tilde E(g)(y)]$. Note that for any $\lambda\vdash n$ we have that 
\begin{align}\label{equation:specializee}
E(e_{\lambda}(\xx))=t^{\ell(\lambda)}. 
\end{align}

\begin{lemma}For every $n \ge 1$,
\begin{align}\label{equation:specializeh}
 E(h_n(\xx))= t(t-1)^{n-1}.
\end{align}
\end{lemma}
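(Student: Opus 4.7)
The plan is to apply the specialization $E$ to both sides of the identity in Proposition \ref{proposition:htoe}, which expresses $h_n(\xx)$ as a signed sum of $e_{\lambda(\nu)}(\xx)$ over compositions $\nu \in \comp_n$, and then use equation (\ref{equation:specializee}) together with the standard enumeration of compositions by number of parts to collapse the resulting sum via the binomial theorem.

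First I would write, using Proposition \ref{proposition:htoe} and the fact that $E$ is an algebra homomorphism,
\begin{align*}
E(h_n(\xx)) = (-1)^n \sum_{\nu \in \comp_n} (-1)^{\ell(\lambda(\nu))} E(e_{\lambda(\nu)}(\xx)).
\end{align*}
Next I would observe that $\lambda(\nu)$ is merely a rearrangement of $\nu$, so $\ell(\lambda(\nu)) = \ell(\nu)$, and by equation (\ref{equation:specializee}) we have $E(e_{\lambda(\nu)}(\xx)) = t^{\ell(\nu)}$. Substituting, the right-hand side becomes
\begin{align*}
(-1)^n \sum_{\nu \in \comp_n} (-t)^{\ell(\nu)}.
\end{align*}

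Then I would reindex by the number of parts. It is a classical fact that the number of compositions of $n$ with exactly $k$ parts equals $\binom{n-1}{k-1}$ (place bars in the $n-1$ gaps between $n$ ones). Hence
\begin{align*}
E(h_n(\xx)) = (-1)^n \sum_{k=1}^{n} \binom{n-1}{k-1} (-t)^k = (-1)^n (-t) \sum_{j=0}^{n-1} \binom{n-1}{j}(-t)^j = (-1)^{n+1} t \, (1-t)^{n-1},
\end{align*}
by the binomial theorem applied to $(1+(-t))^{n-1}$.

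Finally, I would rewrite $(1-t)^{n-1} = (-1)^{n-1}(t-1)^{n-1}$, so that the signs combine to $(-1)^{n+1}(-1)^{n-1} = 1$, yielding $E(h_n(\xx)) = t(t-1)^{n-1}$, as desired. There is essentially no substantive obstacle here; the only thing to watch is bookkeeping of the three sign contributions (from Proposition \ref{proposition:htoe}, from the specialization of $e_\lambda$, and from converting $(1-t)^{n-1}$ to $(t-1)^{n-1}$), but these align cleanly.
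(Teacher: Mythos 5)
Your proposal is correct and follows exactly the paper's own proof: apply $E$ to the identity of Proposition \ref{proposition:htoe}, specialize $e_{\lambda(\nu)}(\xx)$ to $t^{\ell(\nu)}$, group compositions by their number of parts via $\binom{n-1}{k-1}$, and finish with the binomial theorem. The sign bookkeeping you describe matches the paper's final step $(-1)^n(-t)(1-t)^{n-1}=t(t-1)^{n-1}$.
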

\begin{proof}
Using Proposition \ref{proposition:htoe} and equation (\ref{equation:specializee}),
 \begin{align*}
  E(h_n(\xx))&=E\left((-1)^n\sum_{\nu \in 
\comp_n}(-1)^{\ell(\nu)}e_{\nu}(\xx)\right)\\
&=(-1)^n\sum_{\nu \in \comp_n}(-t)^{\ell(\nu)}\\
&=(-1)^n\sum_{k=1}^{n}\binom{n-1}{k-1}(-t)^{k}\\
&=(-1)^n(-t)(1-t)^{n-1},
 \end{align*}
 where $\binom{n-1}{k-1}$ is the number of compositions of $n$ into $k$ parts.
\end{proof}

Applying the specialization $E$ to definition (\ref{definition:s2}), using equation 
(\ref{equation:specializee}) and applying the observation in Section 
\ref{section:stirlingpermutations} that $\ell(\aalambda(\theta))=\des(\theta)$, we obtain 
\begin{align*}
 E(\SP_n^{(r)}(\xx))&=E\left(\sum_{\theta \in \Q_n(r)}e_{\aalambda(\theta)}(\xx)\right)\\
&=\sum_{\theta \in \Q_n(r)}t^{\ell(\aalambda(\theta))}\\
&=\sum_{\theta \in \Q_n(r)}t^{\des(\theta)}\\
&:=A^{(r)}_{n}(t),
\end{align*}
the \emph{$r$-th order Eulerian polynomial}.

\begin{remark}
Applying the specialization $\tilde E$ to equation 
(\ref{equation:exponentialcompositionalinverse}) and using equations (\ref{equation:specializee}) 
and (\ref{equation:specializeh}) we obtain as a corollary Theorem 
\ref{theorem:specializedexponentialcompositional}. In the same manner, applying $\tilde E$ to 
equation 
(\ref{equation:exponentialmultiplicativeinverse}) we obtain as a corollary Riordan's result 
(Theorem 
\ref{theorem:riordan}).
\end{remark}


\section{Connections and future directions}\label{section:futuredirections}
In this section we discuss some instances where the functions $\SP_n^{(r)}(\xx)$ appear for the 
cases $r=1$ and $r=2$.
\subsection{Multiplicative inverse and Lagrange inversion}

Propositions \ref{proposition:ordinarymultiplicative} and \ref{proposition:ordinarycompositional}, 
and Theorems \ref{theorem:exponentialmultiplicative} and \ref{theorem:exponentialcompositional}  are
closely related to the problem of finding multiplicative and compositional inverses of 
general exponential generating functions.

We denote by $\widetilde{\SP}^{(r)}_n(h_1,h_2,\dots)$ the symmetric function $\SP_n^{(r)}(\xx)$ 
written as a polynomial in the generators $h_i(\xx)$, i.e., $\widetilde{\SP}^{(r)}_n \in 
R[h_1,h_2,\dots]$. Let
\begin{align}
 F(y)=\sum_{n\ge0}f_n\frac{y^n}{n!},
\end{align}
where the $f_n$ are in some commutative ring $A$.

Recall that the multiplicative inverse $F^{-1}$ exists if and only if $f_0$ is a unit in $A$. 
The compositional inverse $F^{\left\langle-1\right\rangle}$ exists if and only if $f_0=0$ and $f_1$ 
is a unit in $A$.
The reader can check that in the former case, when we apply the specialization $h_{n}\mapsto 
f_{n}/f_{0}$ for $n\ge 0$ to equation 
(\ref{equation:exponentialmultiplicativeinverse}) we obtain that the $n$-th coefficient in the 
power series $F^{-1}$ is 
\begin{align}\label{equation:inversionmultiplicative}
 n![x^n]F^{-1}(y)= (-1)^{n}f_0^{-1}\widetilde {\SP}^{(1)}_{n}(f_1/f_0,f_2/f_0,\dots).
\end{align}
In the latter case, after applying the specialization $h_{n}\mapsto f_{n+1}/f_1$ for 
$n\ge 1$ to equation (\ref{equation:exponentialcompositionalinverse}) we obtain that the $n$-th 
coefficient of $F^{\left\langle-1\right\rangle}$ is 
\begin{align}\label{equation:inversioncompositional}
 n![x^n]F^{\left\langle-1\right\rangle}(y)=(-1)^{n-1}f_1^{-n}\widetilde 
{\SP}^{(2)}_{n-1}(f_2/f_1,f_3/f_1,\dots).
\end{align}

Equations \eqref{equation:inversionmultiplicative} and \eqref{equation:inversioncompositional} 
contain another interesting piece of information. 
Let $f_n=\SP_{n}^{(1)}(\xx)$ in \eqref{equation:inversionmultiplicative} and 
$f_{n}=\SP_{n-1}^{(2)}(\xx)$ in \eqref{equation:inversioncompositional} for all $n$. 
Then Theorems \ref{theorem:exponentialmultiplicative} and \ref{theorem:exponentialcompositional} 
imply that the left-hand side of \eqref{equation:inversionmultiplicative} becomes $h_n(\xx)$ and 
the left-hand side of  \eqref{equation:inversioncompositional} becomes $h_{n-1}(\xx)$ respectively. 
Since the symmetric functions $h_n(\xx)$ for $n\ge 1$ form a set of generators of the ring 
$\Lambda_{\QQ}$, we get as a corollary that the symmetric functions $\SP_{n}^{(1)}(\xx)$ and 
$\SP_{n}^{(2)}(\xx)$  also generate $\Lambda_{\QQ}$.
For a partition $\lambda \vdash n$ define 
$$\SP_{\lambda}^{(r)}(\xx):=\SP_{\lambda_1}^{(r)}(\xx)\SP_{\lambda_2}^{(r)}(\xx)\cdots 
\SP_{\lambda_{\ell(\lambda)}}^{(r)}(\xx).$$
\begin{theorem}\label{theorem:stirlingsymmetricfunctionsarebases}
For $n \ge 0 $, the sets $\{\SP_{\lambda}^{(1)}(\xx)\,\mid\, \lambda \vdash n\}$ and 
$\{\SP_{\lambda}^{(2)}(\xx)\,\mid\, \lambda \vdash n\}$ are bases 
for the $n$-th homogeneous graded component of $\Lambda_{\QQ}$.
\end{theorem}

\subsection{Poset (co)homology} \label{section:posetcohomology}
The symmetric functions $\SP^{(1)}_{n}(\xx)$ and $\SP^{(2)}_{n}(\xx)$ also appear in the context of 
poset topology.

A weighted partition of $[n]$ is a set $\{B_1^{\nu_1},B_2^{\nu_2},...,B_t^{\nu_t}\}$
where $\{B_1,B_2,...,B_t\}$ is a set partition of $[n]$ and $\nu_i \in \wcomp_{|B_i|-1}$ for all 
$i$. 
For $\nu,\mu \in \wcomp$ we say that $\nu\le \mu$ if $\nu(i)\le \mu(i)$ for every $i$.
 The {\it poset of weighted partitions} $\Pi_n^{w}$  is the set of weighted partitions of $[n]$ 
with order relation given by 
$\{A_1^{w_1},A_2^{w_2},...,A_s^{w_t}\}\le\{B_1^{v_1}, B_2^{v_2},...,B_t^{v_t}\}$ if the following
conditions hold:

\begin{itemize}
 \item $\{A_1,A_2,...,A_s\}$ is a refinement of $\{B_1,B_2,...,B_t\}$ and,
 \item If $B_j=A_{i_1}\cup A_{i_2}\cup ... \cup A_{i_l} $ then 
 $w_{i_1} + w_{i_2} + ... + w_{i_l} \le v_j $.
\end{itemize}

The poset $\Pi_n^{w}$ has one minimal element 
$\hat{0}:=\{1^{\mathbf{0}},2^{\mathbf{0}},\dots,n^{\mathbf{0}}\}$, where $\mathbf{0}=(0,0,\dots)$, 
and maximal elements $[n]^{\mu}:=\{[n]^{\mu}\}$ indexed by weak compositions $\mu \in 
\wcomp_{n-1}$. The following result was found in \cite{Dleon2013} using poset topology techniques 
(see \cite{Dleon2013} for all the definitions).

\begin{theorem}[\cite{Dleon2013}]\label{theorem:mobiuspartition}  For all $n \ge 1$,
\begin{align*}
\sum_{\mu \in
\wcomp_{n-1}}\mu_{\Pi_{n}^{w}}((\hat{0},[n]^{\mu}))\xx^{\mu}=(-1)^{n-1}{\SP}^{(2)}_{n-1}(\xx),
\end{align*}
where $\mu_{\Pi_{n}^{w}}((\hat{0},[n]^{\mu}))$ is the M\"obius invariant of the maximal interval 
$(\hat{0},[n]^{\mu})$.
\end{theorem}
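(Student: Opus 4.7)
The plan is to compute each M\"obius invariant $\mu_{\Pi_n^w}((\hat{0}, [n]^\mu))$ by constructing an EL-labeling of $\Pi_n^w$ whose ascent-free (falling) maximal chains in $(\hat{0}, [n]^\mu)$ are in bijection with the colored Lyndon trees in $\lyn_\mu$, and then matching the resulting sum with the expansion of $SP^{(2)}_{n-1}(\xx)$ already available from Section~\ref{section:binarytrees}.

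First I would verify that $\Pi_n^w$ is graded and that every cover relation merges two blocks while simultaneously incrementing exactly one coordinate of the accumulated weight vector. Since $|\mu| = n-1$ and there are $n-1$ block-merges along any maximal chain in $(\hat{0}, [n]^\mu)$, the weight budget is consumed one unit at a time, so every maximal chain has length $n-1$. Next I would define an edge labeling that records, for each cover relation, the pair $(\min(A\cup B), k)$, where $A,B$ are the merged blocks and $k$ is the coordinate of the weight increment, with a lexicographic order on these pairs. The EL-property can then be checked by a direct induction on the rank of the interval: within any interval there is a unique way to build up a maximal chain whose labels strictly increase. The ascent-free chains are then exactly those whose label sequence is strictly decreasing, and encoding each such chain as a colored normalized binary tree (the tree records the merge order, its leaves carry the labels in $[n]$, and its internal-node colors carry the weight increments) matches the decreasing condition on adjacent edge-labels with the colored Lyndon condition~(\ref{equation:lyndoncondition}); this identifies the falling chains of $(\hat{0}, [n]^\mu)$ with the elements of $\lyn_\mu$.

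Given the EL-labeling, the standard formula for M\"obius invariants of EL-shellable intervals yields
\[
\mu_{\Pi_n^w}((\hat{0}, [n]^\mu)) = (-1)^{n-1}|\lyn_\mu|.
\]
Multiplying by $\xx^\mu$, summing over $\mu \in \wcomp_{n-1}$, and invoking Proposition~\ref{proposition:generatinglyndontype} (which identifies $\sum_\mu |\lyn_\mu|\,\xx^\mu$ with $\sum_{\Upsilon \in \nor_n} e_{\lyndonlambda(\Upsilon)}(\xx)$) together with the bijection $\gamma\colon \nor_n \to \Q_{n-1}$ of Proposition~\ref{proposition:propgamma} (which transports $\lyndonlambda$ to $\aalambda$), one obtains
\[
\sum_{\mu \in \wcomp_{n-1}} \mu_{\Pi_n^w}((\hat{0}, [n]^\mu))\,\xx^\mu = (-1)^{n-1}\sum_{\theta \in \Q_{n-1}} e_{\aalambda(\theta)}(\xx) = (-1)^{n-1}SP^{(2)}_{n-1}(\xx).
\]

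The main obstacle will be the construction and verification of the EL-labeling, and in particular the precise identification of its falling chains with $\lyn_\mu$. The delicate point is the interaction between the two independent pieces of data in each cover relation (which blocks get merged, and which coordinate of the weight is incremented), together with the need to choose a label order under which the Lyndon condition~(\ref{equation:lyndoncondition}) on colors is equivalent to the descent condition on successive edge labels. Once the EL-labeling is in place and its falling chains are matched with $\lyn_\mu$, everything else is formal, essentially a rerun of the argument used to prove Proposition~\ref{proposition:generatinglyndontype}.
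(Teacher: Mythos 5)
Your overall strategy is exactly the one the paper attributes to \cite{Dleon2013}: produce an EL-labeling of $\Pi_n^w$ whose falling chains in $(\hat{0},[n]^{\mu})$ are counted by $|\lyn_{\mu}|$, conclude $\mu_{\Pi_{n}^{w}}((\hat{0},[n]^{\mu}))=(-1)^{n-1}|\lyn_{\mu}|$, and then assemble the sum via Proposition \ref{proposition:generatinglyndontype} and the bijection $\gamma$ of Proposition \ref{proposition:propgamma}. That final assembly is correct as you state it, as are the observations that $\Pi_n^w$ is graded and that every maximal chain of $(\hat{0},[n]^{\mu})$ has length $n-1$.

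The genuine gap is the labeling itself. The label $(\min(A\cup B),k)$ with lexicographic order is not an EL-labeling; it fails already on the underlying partition lattice. In the interval from $\hat{0}$ to the maximal element of $\Pi_3^w$ with all weight concentrated in one color $k$, the three maximal chains carry the label sequences $\bigl((1,k),(1,k)\bigr)$, $\bigl((1,k),(1,k)\bigr)$ and $\bigl((2,k),(1,k)\bigr)$: no chain is strictly increasing and two are weakly increasing, so there is no unique rising chain under either convention. The defect is that $\min(A\cup B)=\min(\min A,\min B)$ discards the larger of the two block minima, which is precisely the information an EL-labeling of a partition-type poset must retain (compare the classical labeling of $\Pi_n$ by $\max(\min A,\min B)$). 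The labeling actually used in \cite{Dleon2013} records the ordered pair of block minima together with the color, and orders these triples non-lexicographically, comparing the color \emph{between} the first and the second minimum; it is exactly this interleaving that makes the no-ascent condition on consecutive labels translate into the colored Lyndon condition (\ref{equation:lyndoncondition}) rather than into something else. So the step you flag as ``the main obstacle'' is not merely delicate to verify: the candidate labeling you propose would have to be replaced, and with it your identification of the falling chains (which, incidentally, are the chains with no ascent, i.e.\ weakly decreasing labels, not necessarily strictly decreasing ones).
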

It was also proved in \cite{Dleon2013} that there is an $\sym_n$-module isomorphism
 \begin{align*}  
 \lie(\mu) \simeq_{\sym_n}  \widetilde H^{n-3}((\hat 0, [n]^{\mu}))  \otimes \sgn_n 
\end{align*}
for all $\mu \in \wcomp_{n-1}$, where  $\lie(\mu)$ is the multilinear component determined by 
$\mu$ of the free Lie 
algebra with multiple compatible brackets, $\widetilde H^{n-3}((\hat 0, [n]^{\mu}))$ is the 
reduced cohomology of the interval $(\hat{0},[n]^{\mu})$ and $\sgn_n$ is the sign representation 
of the symmetric group $\sym_n$. The following theorem is a corollary of 
this isomorphism, Philip Hall's theorem, Theorem \ref{theorem:mobiuspartition} and the fact that 
$\Pi_{n}^{w}$ is Cohen-Macaulay (see \cite{Dleon2013}).

\begin{theorem}[\cite{Dleon2013}]\label{theorem:liemu}  For all $n \ge 1$,
\begin{align*}
\sum_{\mu \in
\wcomp_{n-1}}\dim \lie(\mu) \xx^{\mu}={\SP}^{(2)}_{n-1}(\xx),
\end{align*}
where $\dim V$ is the dimension of the vector space $V$.
\end{theorem}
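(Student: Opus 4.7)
The plan is to deduce Theorem \ref{theorem:liemu} as a direct numerical consequence of the given $\sym_n$-module isomorphism together with Theorem \ref{theorem:mobiuspartition}. Since the isomorphism
\begin{align*}
\lie(\mu) \simeq_{\sym_n} \widetilde H^{n-3}((\hat 0, [n]^{\mu})) \otimes \sgn_n
\end{align*}
is an isomorphism of $\sym_n$-modules, and tensoring with the one-dimensional sign representation preserves dimension, the first step is the trivial observation that
\begin{align*}
\dim \lie(\mu) = \dim \widetilde H^{n-3}((\hat 0, [n]^{\mu})).
\end{align*}

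Next I would relate the right-hand side to the M\"obius invariant appearing in Theorem \ref{theorem:mobiuspartition}. Maximal chains in the open interval $(\hat 0, [n]^{\mu})$ of $\Pi_n^w$ have length $n-1$ (each covering relation merges two blocks), so the order complex is pure of dimension $n-3$. For this to imply that the cohomology is concentrated in top degree one needs the interval to be Cohen--Macaulay (or at least to have the homotopy type of a wedge of $(n-3)$-spheres); this is exactly what is established for $\Pi_n^w$ in \cite{Dleon2013} via an EL-labeling. Granting this, the Philip Hall / Euler characteristic identity collapses to
\begin{align*}
\mu_{\Pi_{n}^{w}}((\hat{0},[n]^{\mu})) = (-1)^{n-3}\dim \widetilde H^{n-3}((\hat 0, [n]^{\mu})) = (-1)^{n-1}\dim \lie(\mu).
\end{align*}

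Finally I would sum over $\mu \in \wcomp_{n-1}$, multiply by $\xx^{\mu}$, and apply Theorem \ref{theorem:mobiuspartition}:
\begin{align*}
(-1)^{n-1}\sum_{\mu \in \wcomp_{n-1}}\dim \lie(\mu)\,\xx^{\mu} = \sum_{\mu \in \wcomp_{n-1}}\mu_{\Pi_{n}^{w}}((\hat{0},[n]^{\mu}))\xx^{\mu} = (-1)^{n-1}SP^{(2)}_{n-1}(\xx),
\end{align*}
and the two factors of $(-1)^{n-1}$ cancel, yielding the claim. There is really no serious obstacle here beyond bookkeeping: the content is packaged in the isomorphism and in Theorem \ref{theorem:mobiuspartition}, so the only potential pitfall is the sign computation, which requires being careful that the order complex of $(\hat 0, [n]^{\mu})$ has dimension $n-3$ (not $n-2$ or $n-1$) and therefore that the sign picked up from the Euler characteristic is $(-1)^{n-3}=(-1)^{n-1}$, exactly matching the sign in Theorem \ref{theorem:mobiuspartition}.
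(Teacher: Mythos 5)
Your proposal is correct and follows exactly the route the paper intends: the paper states Theorem \ref{theorem:liemu} as a corollary of the $\sym_n$-module isomorphism $\lie(\mu)\simeq \widetilde H^{n-3}((\hat 0,[n]^{\mu}))\otimes \sgn_n$ together with Theorem \ref{theorem:mobiuspartition}, which is precisely your chain of reasoning. Your bookkeeping is also right — the open interval has order complex of dimension $n-3$, the EL-labeling of $\Pi_n^w$ from \cite{Dleon2013} gives the concentration of cohomology in top degree needed to turn the M\"obius invariant into $(-1)^{n-1}\dim\widetilde H^{n-3}$, and the signs cancel as you say.
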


It turns out that ${\SP}^{(1)}_{n-1}(\xx)$ also makes an appearance in the context of poset 
topology in subsequent work of the author \cite{Dleon2016}. 

Let 
$\B_n$ be the boolean algebra (the poset of subsets of $[n]$
ordered by inclusion) and $\WC_n$ be the poset formed by weak compositions $\mu$ such that 
$|\mu|\le n$ together with the order relation defined before.
Both posets are ranked and hence have well-defined poset maps $rk:\B_n\rightarrow C_{n+1}$ and
$rk:\WC_n\rightarrow C_{n+1}$ to the $n+1$ chain $C_{n+1}$. Recall that the \emph{Segre or fiber
product} $\displaystyle A\substack{\times \\f,g}B$ of
two poset maps $f:A \rightarrow C$ and $g:B \rightarrow C$ is the induced subposet of the product
$A \times B$ with elements $\{(a,b)\arrowvert \, f(a)=g(b)\}$. Denote by
$\displaystyle \B_n^w=\B_n\substack{\times \\rk,rk}\WC_n$, the poset of weighted subsets of $[n]$. 
This poset has one minimal element
$\hat{0}:=\varnothing^{\mathbf{0}}$ and  maximal elements $[n]^{\mu}:=\{[n]^{\mu}\}$ indexed by 
weak compositions $\mu \in 
\wcomp_{n}$.

The following theorem is proved in  \cite{Dleon2016}.

\begin{theorem}[\cite{Dleon2016}]\label{theorem:mobiusset}  For all $n \ge 0$,
\begin{align*}
\sum_{\mu \in
\wcomp_{n}}\mu_{\B_{n}^{w}}((\hat{0},[n]^{\mu}))\xx^{\mu}=(-1)^{n}{\SP}^{(1)}_{n}(\xx).
\end{align*}
\end{theorem}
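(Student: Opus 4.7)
The plan is to reduce Theorem \ref{theorem:mobiusset} to Theorem \ref{theorem:exponentialmultiplicative2} via the defining M\"obius recursion, in close analogy with how Theorem \ref{theorem:mobiuspartition} is tied to Theorem \ref{theorem:exponentialcompositional}.

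First I would observe that, for any $(S,\nu)\in B_n^w$, the lower interval $[\hat 0,(S,\nu)]$ consists of all pairs $(T,\eta)$ with $T\subseteq S$, $\eta\le\nu$, and $|T|=|\eta|$; since any order-preserving bijection $S\to S'$ between subsets of equal size induces a poset isomorphism between the corresponding intervals, the value $\mu_{B_n^w}(\hat 0,(S,\nu))$ depends only on $\nu$. Denote this common value by $f(\nu)$. Applying the defining recursion $\sum_{x\le \hat 1}\mu_{B_n^w}(\hat 0,x)=0$ at the maximal element $\hat 1=[n]^\mu$ and grouping its elements $(S,\nu)$ by $\nu$ (the $\binom{n}{|\nu|}$ admissible subsets $S\subseteq[n]$ contribute a binomial factor) yields
\begin{align}\label{eq:mobrec}
\sum_{\nu\le\mu}\binom{n}{|\nu|}f(\nu)=\delta_{n,0},\qquad \mu\in\wcomp_n,
\end{align}
together with $f(\mathbf 0)=1$.

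Packaging these invariants into $G_n(\xx):=\sum_{\mu\in\wcomp_n}f(\mu)\xx^\mu$, I would multiply (\ref{eq:mobrec}) by $\xx^\mu$, sum over $\mu\in\wcomp_n$, swap the order of summation, and apply the two identities $h_k(\xx)=\sum_{\eta\in\wcomp_k}\xx^\eta$ and $\sum_{\mu\ge\nu,\,|\mu|=n}\xx^\mu=\xx^\nu h_{n-|\nu|}(\xx)$. After grouping by $k=|\nu|$ this produces the convolution
\begin{align*}
\sum_{k=0}^n\binom{n}{k}G_k(\xx)\,h_{n-k}(\xx)=\delta_{n,0},\qquad n\ge 0,
\end{align*}
which is equivalent to the e.g.f.\ identity $\bigl(\sum_{k\ge 0}G_k(\xx)\tfrac{y^k}{k!}\bigr)\bigl(\sum_{k\ge 0}h_k(\xx)\tfrac{y^k}{k!}\bigr)=1$.

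To finish I would invoke Theorem \ref{theorem:exponentialmultiplicative2} and substitute $y\mapsto -y$ on both sides, obtaining $\bigl(\sum_{k\ge 0}h_k(\xx)\tfrac{y^k}{k!}\bigr)^{-1}=\sum_{k\ge 0}(-1)^k SP^{(1)}_k(\xx)\tfrac{y^k}{k!}$. Comparing with the identity from the previous step immediately gives $G_n(\xx)=(-1)^n SP^{(1)}_n(\xx)$, which is the content of Theorem \ref{theorem:mobiusset}. The only nontrivial step is the first, namely the observation that $\mu_{B_n^w}(\hat 0,(S,\nu))$ depends only on $\nu$; everything else is generating function bookkeeping whose substance is already encoded in Theorem \ref{theorem:exponentialmultiplicative}. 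Accordingly I do not anticipate a serious obstacle here.
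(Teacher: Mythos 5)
Your argument is correct, but it is not the route the paper has in mind. The paper offers no proof of Theorem \ref{theorem:mobiusset} beyond the remark that it ``can be proven using similar techniques as the ones in \cite{Dleon2013}'', which in that reference means genuine poset-topology machinery: an EL-labeling of the weighted poset whose falling chains in each maximal interval $(\hat 0,[n]^{\mu})$ are counted (up to sign) by the relevant combinatorial objects of content $\mu$, here colored permutations. You instead run the logic of the paper's own subsequent Remark in reverse: the M\"obius recursion on $[\hat 0,[n]^{\mu}]$, after the (correct) observation that $\mu_{B_n^w}(\hat 0,(S,\nu))$ depends only on $\nu$, packages into the convolution $\sum_k\binom{n}{k}G_k(\xx)h_{n-k}(\xx)=\delta_{n,0}$, and then Theorem \ref{theorem:exponentialmultiplicative2} (with $y\mapsto -y$) identifies $G_n=(-1)^nSP^{(1)}_n$. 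This is non-circular because the paper proves Theorem \ref{theorem:exponentialmultiplicative2} independently via Theorem \ref{theorem:gessel} (words with allowed and forbidden links), not via the poset; the M\"obius recursion determines $f$ uniquely, so establishing either side of the equivalence suffices. Your generating-function bookkeeping checks out, including the identities $h_k(\xx)=\sum_{\eta\in\wcomp_k}\xx^{\eta}$ and $\sum_{\mu\ge\nu,\,|\mu|=n}\xx^{\mu}=\xx^{\nu}h_{n-|\nu|}(\xx)$. What your shortcut gives up is everything beyond the M\"obius numbers: the EL-labeling route would additionally establish shellability of the intervals of $B_n^w$, the homotopy type of $(\hat 0,[n]^{\mu})$ as a wedge of spheres, and an explicit cohomology basis --- the kind of structural information the paper actually cares about for $\Pi_n^w$ (cf.\ Theorem \ref{theorem:liemu}) and presumably intends for $B_n^w$ in the promised future article. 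As a proof of the stated identity alone, yours is shorter, complete, and sound.
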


There is a natural colored extension of the exterior algebra $\LL(V)$ over a 
vector space $V$ that provides an analogous version of Theorem \ref{theorem:liemu}. 
In this case we have that the multilinear components $\LL(\mu)$ of the colored exterior algebra 
satisfy the $\sym_n$-isomorphism
 \begin{align*}  
 \LL(\mu) \simeq_{\sym_n}  \widetilde H^{n-2}((\hat 0, [n]^{\mu}))
\end{align*}
for all $\mu \in \wcomp_n$, where  $[\hat 0, [n]^{\mu}]$ is the closed maximal interval in $\B_n^w$ 
determined by $\mu$.
\begin{theorem}[\cite{Dleon2016}]\label{theorem:exteriormu}  For all $n \ge 1$,
\begin{align*}
\sum_{\mu \in
\wcomp_{n}}\dim \LL(\mu) \xx^{\mu}={\SP}^{(1)}_{n}(\xx).
\end{align*}
\end{theorem}

\begin{remark}
 Both Theorem \ref{theorem:exponentialcompositional} and 
Theorem \ref{theorem:exponentialmultiplicative} can also be derived from Theorem 
\ref{theorem:mobiuspartition} and \ref{theorem:mobiusset} using the recursive definition of the 
M\"obius invariant, see \cite{Dleon2013,Dleon2016}.
\end{remark}

\subsection{Stable $n$-pointed curves}
In \cite{KaufmannManinZagier1996}  another surprising connection with the
symmetric functions $\SP^{(2)}_{n}(\xx)$ appeared in the context of moduli spaces 
$\overline{M_{0,n}}$ of 
stable $n$-pointed curves of
genus $0$. See \cite{KaufmannManinZagier1996} for the proper definitions and notation. Let
$\omega_n(i) \in H^{2i}(\overline{M_{0,n}},\QQ)$ denote the Mumford classes and for a partition
$\lambda=1^{m_1}2^{m_2}\dots$ denote $\omega^{\lambda}_n=\prod_i\omega_n(i)^{m_i}$. The
\emph{higher Weil-Petersson volumes} are defined as
\begin{align*}
WP(\lambda)=\int_{\overline{M_{0,n}}} \omega^{\lambda}_n.
\end{align*}
The following theorem follows directly from Theorem \ref{theorem:exponentialcompositional} and
\cite[Theorem 2.4]{KaufmannManinZagier1996} after making the identifications 
$s_k\mapsto -\frac{p_k(\xx)}{k}$ and $y\mapsto -y$
($s_k$ a formal variable and $p_k(\xx)$ the power sum symmetric function) and writting down the
proper definitions.
\begin{theorem}\label{theorem:modulispaces}For $n\ge 0$
\begin{align*}
 \SP^{(2)}_{n}(\xx) = \sum_{\lambda \vdash n} (-1)^{n-1-l(\lambda)} WP(\lambda) 
\dfrac{p_{\lambda}(\xx)}{z_{\lambda}}
\end{align*}
\end{theorem}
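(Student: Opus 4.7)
The plan is to reduce the identity to a direct application of Theorem~\ref{theorem:exponentialcompositional} via the substitutions $s_k \mapsto -p_k(\xx)/k$ and $y \mapsto -y$ in \cite[Theorem~2.4]{KaufmannManinZagier1996}, as already indicated in the statement. The crucial bridge is the classical exponential formula
$$\sum_{n \ge 0} h_n(\xx)\, t^n = \exp\!\left(\sum_{k \ge 1} \frac{p_k(\xx)}{k}\, t^k\right),$$
which implies that under $s_k \mapsto -p_k(\xx)/k$, any series built from $\exp(\sum_k s_k t^k)$ turns into a series in the complete homogeneous symmetric functions $h_n(\xx)$. Theorem~2.4 of Kaufmann--Manin--Zagier expresses the generating series for the higher Weil--Petersson volumes as the compositional inverse, with respect to $y$, of an explicit kernel built from such an exponential; the substitution transforms that kernel into $\sum_{n\ge 1}(-1)^{n-1}h_{n-1}(\xx)\, y^n/n!$, which is precisely the series on the left-hand side of equation~(\ref{equation:exponentialcompositionalinverse}).

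First I would verify the kernel transformation explicitly. Then, by Theorem~\ref{theorem:exponentialcompositional}, the compositional inverse of this series is $\sum_{n\ge 1}SP^{(2)}_{n-1}(\xx)\, y^n/n!$, so the coefficient of $y^{n+1}/(n+1)!$ on that side is $SP^{(2)}_n(\xx)$. In parallel, I would perform the substitution on the Weil--Petersson side of the KMZ identity, using
$$\prod_i \frac{s_i^{m_i}}{m_i!} \;\longmapsto\; (-1)^{\ell(\lambda)} \prod_i \frac{p_i(\xx)^{m_i}}{i^{m_i}\,m_i!} = (-1)^{\ell(\lambda)}\, \frac{p_\lambda(\xx)}{z_\lambda}$$
for $\lambda = 1^{m_1}2^{m_2}\cdots \vdash n$, together with the $(-1)^{n+1}$ contributed by $y \mapsto -y$ in the coefficient of $y^{n+1}$. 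Adding the exponents, $(-1)^{\ell(\lambda)+(n+1)} = (-1)^{n-1-\ell(\lambda)}$, which matches the claimed sign. Equating coefficients of $y^{n+1}/(n+1)!$ on both sides yields the desired identity.

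The main obstacle will be bookkeeping rather than substance. One has to match the factorial normalizations and the sign/grading conventions of \cite{KaufmannManinZagier1996} (which work on the moduli space of stable $(n+3)$-pointed rational curves) with the present conventions, and track the degree shift between the KMZ side and Theorem~\ref{theorem:exponentialcompositional} so that the total sign indeed comes out as $(-1)^{n-1-\ell(\lambda)}$ rather than its negative. Once the two sides are expressed in a common form, the theorem follows immediately from the uniqueness of the compositional inverse in $\Lambda[[y]]$, since both $\sum_{n\ge 1}SP^{(2)}_{n-1}(\xx)\,y^n/n!$ and the substituted right-hand side of KMZ~2.4 are compositional inverses of the same series.
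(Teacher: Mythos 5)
Your proposal follows exactly the route the paper takes: the paper's entire proof consists of the remark that the theorem "follows directly from Theorem \ref{theorem:exponentialcompositional} and [KMZ, Theorem 2.4] after making the identifications $s_k\mapsto -p_k(\xx)/k$ and $y\mapsto -y$ and writing down the proper definitions," and your plan is a faithful elaboration of precisely that substitution-and-coefficient-matching argument. The only caution is in your parenthetical use of the exponential formula: under $s_k\mapsto -p_k(\xx)/k$ alone, $\exp(\sum_k s_k t^k)$ becomes $\bigl(\sum_n h_n(\xx)t^n\bigr)^{-1}=\sum_n(-1)^n e_n(\xx)t^n$, so recovering the $h$-series requires the sign already present in the KMZ kernel (or the $y\mapsto -y$ flip) — exactly the bookkeeping you flag as the remaining work.
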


In \cite{KaufmannManinZagier1996} the authors also provide a recursive definition, a differential 
equation 
and the following closed formula
for the coefficients $WP(\lambda)$. For partitions $\nu^1,\nu^2\dots,\nu^k$ denote by
$\nu^1+\nu^2+\cdots+\nu^k$ the partition obtained by taking the union of all the parts
of all the $\nu^i$.

\begin{theorem}[\cite{KaufmannManinZagier1996} Corollary 2.3]
For $n\ge 0$ and $\lambda\vdash n$
\begin{align*} 
WP(\lambda)=n!\sum_{k=0}^{l(\lambda)}(-1)^{l(\lambda)-k}\binom{n+k}{k} 
\sum_{\substack{\nu^1,\dots,\nu^k \\
\nu^1+\nu^2+\cdots+\nu^k=\lambda \\ \nu^i \neq
0}}\dfrac{\prod_{j=1}^{l(\lambda)}\binom{m_j(\lambda)}{m_j(\nu^1),m_j(\nu^2)\dots,m_j(\nu^k)}}{
\prod_ { i=1} ^ {k} (|\nu^i|+1)!}.
\end{align*}
\end{theorem}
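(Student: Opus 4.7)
The plan is to derive the closed formula by applying the exponential Lagrange inversion formula to the identity from Theorem \ref{theorem:exponentialcompositional}, and then matching the resulting expansion against the power-sum expansion of $SP^{(2)}_{n-1}(\xx)$ provided by Theorem \ref{theorem:modulispaces}.

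First, let $G(y)=\sum_{n\ge1}(-1)^{n-1}h_{n-1}(\xx)\tfrac{y^n}{n!}$, so by Theorem \ref{theorem:exponentialcompositional} its compositional inverse is $\sum_{n\ge1}SP^{(2)}_{n-1}(\xx)\tfrac{y^n}{n!}$. Applying the standard Lagrange inversion formula $[y^n]G^{\langle-1\rangle}(y)=\tfrac{1}{n}[y^{n-1}](y/G(y))^n$ gives
\begin{align*}
SP^{(2)}_{n-1}(\xx)=(n-1)!\,[y^{n-1}](y/G(y))^n.
\end{align*}
Since $G(y)/y=1+\sum_{m\ge1}(-1)^m h_m(\xx)\tfrac{y^m}{(m+1)!}$, the binomial series $(1+A)^{-n}=\sum_k(-1)^k\binom{n+k-1}{k}A^k$ yields an expansion of $(y/G(y))^n$ as a sum indexed by $k\ge 0$ and tuples $(m_1,\ldots,m_k)$ of positive integers with $m_1+\cdots+m_k=n-1$, whose $y^{n-1}$-coefficient involves $\prod_i h_{m_i}(\xx)/\prod_i(m_i+1)!$ with sign $(-1)^{n-1+k}$.

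Second, I would pass to power sums via $h_m(\xx)=\sum_{\mu\vdash m}p_\mu(\xx)/z_\mu$, turning each $\prod_i h_{m_i}$ into a sum over tuples $(\nu^1,\ldots,\nu^k)$ of nonempty partitions with $|\nu^i|=m_i$, contributing $p_{\nu^1+\cdots+\nu^k}/\prod_i z_{\nu^i}$. Grouping by the underlying partition $\lambda=\nu^1+\cdots+\nu^k$ of $n-1$, the coefficient of $p_\lambda(\xx)$ in $SP^{(2)}_{n-1}(\xx)$ becomes the triple sum
\begin{align*}
(n-1)!(-1)^{n-1}\sum_{k\ge0}(-1)^k\binom{n+k-1}{k}\sum_{\substack{\nu^1+\cdots+\nu^k=\lambda\\ \nu^i\neq\emptyset}}\frac{1}{\prod_i z_{\nu^i}(|\nu^i|+1)!}.
\end{align*}
Comparing this with Theorem \ref{theorem:modulispaces}, which asserts that the coefficient of $p_\lambda$ in $SP^{(2)}_{n-1}(\xx)$ equals $(-1)^{n-2-\ell(\lambda)}WP(\lambda)/z_\lambda$, I would solve algebraically for $WP(\lambda)$ and then rewrite $z_\lambda/\prod_i z_{\nu^i}$ using the standard identity $z_\lambda/\prod_i z_{\nu^i}=\prod_j\binom{m_j(\lambda)}{m_j(\nu^1),\ldots,m_j(\nu^k)}$, which holds because $\sum_i m_j(\nu^i)=m_j(\lambda)$ forces the $j^{m_j(\lambda)}$ factors to cancel and leaves precisely the multinomial in counts of parts of size $j$. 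Finally, note that the outer sum truncates at $k=\ell(\lambda)$, since no decomposition of $\lambda$ into more than $\ell(\lambda)$ nonempty subpartitions exists.

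The main obstacle is not the algebraic manipulation but the bookkeeping of signs: one must track carefully the $(-1)^{n-1}$ from Lagrange inversion, the $(-1)^k$ from the binomial series, the $(-1)^{m_1+\cdots+m_k}=(-1)^{n-1}$ from $G(y)/y$, and the $(-1)^{n-2-\ell(\lambda)}$ from Theorem \ref{theorem:modulispaces}, and confirm that they combine to produce exactly the sign $(-1)^{\ell(\lambda)-k}$ appearing in the stated formula (absorbing any overall sign into the convention implicit in the identification $s_k\mapsto -p_k/k$, $y\mapsto-y$ described in the paragraph preceding Theorem \ref{theorem:modulispaces}). Once the signs are reconciled, the closed formula is immediate.
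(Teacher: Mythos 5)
The paper does not actually prove this statement: it is quoted verbatim from Kaufmann--Manin--Zagier (their Corollary 2.3), so there is no internal proof to compare against. Your proposal is therefore necessarily a different route, and its skeleton is sound: Lagrange inversion applied to $G(y)=\sum_{n\ge1}(-1)^{n-1}h_{n-1}\frac{y^n}{n!}$ does give $SP^{(2)}_{n}(\xx)=n!\,[y^{n}](y/G(y))^{n+1}$ (note you should invert at exponent $n+1$, not $n$, to land on $\lambda\vdash n$ and produce the $\binom{n+k}{k}$ of the statement rather than $\binom{n+k-1}{k}$); the expansion of $(1+A)^{-(n+1)}$ with $A=\sum_{m\ge1}(-1)^mh_m\frac{y^m}{(m+1)!}$, the passage $h_m=\sum_{\mu\vdash m}p_\mu/z_\mu$ to ordered tuples $(\nu^1,\dots,\nu^k)$, the identity $z_\lambda/\prod_iz_{\nu^i}=\prod_j\binom{m_j(\lambda)}{m_j(\nu^1),\dots,m_j(\nu^k)}$, and the truncation at $k=\ell(\lambda)$ are all correct. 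There is also no circularity in invoking Theorem \ref{theorem:modulispaces}, since that theorem rests on KMZ's Theorem 2.4, not on their Corollary 2.3.

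The genuine gap is the sign reconciliation you defer to the end: it does not close as you hope. Carrying your computation through, the coefficient of $p_\lambda$ in $SP^{(2)}_n$ is $n!(-1)^n\sum_k(-1)^k\binom{n+k}{k}\,z_\lambda^{-1}\cdot(\text{inner sum})$, and equating this with $(-1)^{n-1-\ell(\lambda)}WP(\lambda)/z_\lambda$ from Theorem \ref{theorem:modulispaces} as printed yields $WP(\lambda)=n!\sum_k(-1)^{\ell(\lambda)-k+1}\binom{n+k}{k}(\cdots)$, which is off from the stated formula by a global factor of $-1$. This residual sign cannot be ``absorbed into a convention'': it reflects an off-by-one in the sign exponent of Theorem \ref{theorem:modulispaces} as stated in the paper. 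Indeed, the closed formula gives $WP((1))=1$, $WP((2))=1$, $WP((1,1))=5$, while the appendix values $SP^{(2)}_1=p_{(1)}$ and $SP^{(2)}_2=-\tfrac12p_{(2)}+\tfrac52p_{(1,1)}$ force the sign in Theorem \ref{theorem:modulispaces} to be $(-1)^{n-\ell(\lambda)}$ rather than $(-1)^{n-1-\ell(\lambda)}$ if the volumes are to come out positive. With that corrected sign your derivation closes exactly; without confronting it, the proof as proposed terminates in a contradiction rather than in the stated identity.
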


\subsection{Open questions}
Theorems \ref{theorem:exponentialmultiplicative}  and 
\ref{theorem:exponentialcompositional} say that the
symmetric functions $\SP^{(1)}_{n}(\xx)$  and $\SP^{(2)}_{n}(\xx)$ are involved in the computation 
of 
multiplicative and compositional inverses of power series. According to Theorem 
\ref{theorem:stirlingsymmetricfunctionsarebases} these functions are generators of 
$\Lambda_{\QQ}$. They are also the generating 
functions for the M\"obius invariants of the maximal intervals of the posets $B_n^w$ (Theorem 
\ref{theorem:mobiusset}) and $\Pi_n^w$ (Theorem \ref{theorem:mobiuspartition}), respectively. They 
are the dimension generating functions  of the 
multilinear components of the free colored exterior algebra (Theorem \ref{theorem:exteriormu}) and 
the free multibracketed Lie algebra 
(Theorem \ref{theorem:liemu}). Finally, $\SP^{(2)}_{n}(\xx)$ is the 
generating function (viewed in the $p$ basis expansion) of the generalized Weil-Petersson volumes 
of the moduli space $\overline{M_{0,n}}$  (Theorem \ref{theorem:modulispaces}). The question now is 
to 
understand whether some or all of these results extend to the more general family of 
symmetric functions $\SP^{(r)}_{n}(\xx)$ for $r \ge 3$.

\begin{question}
Is there a more general family of posets $P(n,r)$ such that the weighted generating function for
the M\"obius invariant of maximal intervals is given up to sign by $\SP^{(r)}_{n}(\xx)$ as in
Theorems \ref{theorem:mobiusset} and \ref{theorem:mobiuspartition}?
\end{question}

\begin{question}
Is there any combinatorial context where the functions $\SP^{(r)}_{n}(\xx)$ are meaningful for
any $r \ge 3$? Are there formulas similar to equations 
(\ref{equation:exponentialmultiplicativeinverse}) and 
(\ref{equation:exponentialcompositionalinverse})
for the $\SP^{(r)}_{n}(\xx)$ when $r\ge3$?
\end{question}

\begin{question}
Are the sets $\{\SP_{\lambda}^{(r)}(\xx)\,\mid\, \lambda \vdash n\}$ bases 
for the $n$-th homogeneous graded component of $\Lambda_{\QQ}$ for every $r\ge 3$?
\end{question}

As of the time of the construction of this article we do not know of any partial result in any of 
these directions. One central question is whether the family of $r$-Stirling permutations for 
$r\ge 3$ is the right family of multipermutations to extend the results in this work or if 
another family of multipermutations generalizing both $\sym_n$ and $\Q_n$ is needed.

\subsection{Further work}
Theorem \ref{theorem:exponentialcompositional} can be generalized in a different direction by 
considering families of normalized $k$-ary trees that satisfy a certain coloring condition. This 
generalization however does not include Theorem \ref{theorem:exponentialmultiplicative} as a 
special case. We present and explore this generalization in a future article.

\section*{Acknowledgments}
The author is grateful to Michelle Wachs for her guidance during this project and all the 
valuable discussions. The author also would like to thank Ira Gessel and Fran\c{c}ois 
Bergeron for the valuable discussions and useful comments to improve both the context and the 
presentation of the results. The author also appreciates the very helpful comments of two anonymous 
referees.
A great part of this work was done while the author was supported by NSF Grant  DMS 1202755 and by 
Colciencias (Departamento Administrativo de Ciencia, Tecnolog\'ia e Innovaci\'on).

\bibliographystyle{abbrv}
\bibliography{stirlingsym}

\end{document}